\documentclass[10pt]{article}

\usepackage[utf8]{inputenc}							%
\usepackage[T1]{fontenc}						%
\usepackage[normalem]{ulem}
\usepackage[a4paper]{geometry}
\usepackage[font=footnotesize]{caption}
\usepackage{amsfonts}
\usepackage{amsmath}
\usepackage{amssymb}
\usepackage{amsthm}
\usepackage{mathtools}
\mathtoolsset{
  showonlyrefs=false
}
\usepackage{url}

\usepackage[dvipsnames,svgnames]{xcolor}						%
\colorlet{MyBlue}{DodgerBlue!75!Black}
\colorlet{MyGreen}{DarkGreen!85!Black}
\colorlet{MyGray}{White!75!Black}
\usepackage{hyperref}
\hypersetup{
final,
colorlinks=true,
linktocpage=true,
pdfstartview=FitH,
breaklinks=true,
pdfpagemode=UseNone,
pageanchor=true,
pdfpagemode=UseOutlines,
plainpages=false,
bookmarksnumbered,
bookmarksopen=false,
bookmarksopenlevel=1,
hypertexnames=true,
pdfhighlight=/O,
urlcolor=Maroon,linkcolor=MyBlue!60!black,citecolor=DarkGreen!70!black,	%
pdftitle={},
pdfauthor={},
pdfsubject={},
pdfkeywords={},
pdfcreator={pdfLaTeX},
pdfproducer={LaTeX with hyperref}
}

\bibliographystyle{abbrv}

\theoremstyle{plain}
\newtheorem{theorem}{Theorem}[section]
\newtheorem{lemma}[theorem]{Lemma}
\newtheorem{corollary}[theorem]{Corollary}
\newtheorem{proposition}[theorem]{Proposition}

\theoremstyle{definition}
\newtheorem{definition}[theorem]{Definition}
\newtheorem{example}[theorem]{Example}
\theoremstyle{remark}
\newtheorem{remark}[theorem]{Remark}

\usepackage{environ}
\NewEnviron{align+}{%
\begin{align}
  \BODY
\end{align}
}
\NewEnviron{equation+}{%
\begin{equation}
  \BODY
\end{equation}
}

\title{Hybrid Methods in Polynomial Optimisation\thanks{The authors are ordered alphabetically.}}

\author{
  Johannes Aspman\thanks{Faculty of Electrical Engineering, Czech Technical University of Prague, the Czech Republic.}
  \and Gilles Bareilles\footnotemark[2]
  \and Vyacheslav Kungurtsev\footnotemark[2]
  \and Jakub Mare\v{c}ek\footnotemark[2]
  \and Martin Tak\'a\v{c}\thanks{Mohamed bin Zayed University of Artificial Intelligence, Masdar City, Abu Dhabi, United Arab Emirates.}
}

\usepackage{xspace}
\newcommand{\ie}{\emph{i.e.},\xspace}
\newcommand{\eg}{\emph{e.g.},\xspace}

\usepackage[export]{adjustbox}
\usepackage[caption=false]{subfig} %

\usepackage{cleveref}

\usepackage{algorithm}
\usepackage{algpseudocodex}

\newcommand{\debug}[1]{#1}                                  %

\newcommand{\newmacro}[2]{\newcommand{#1}{\debug{#2}}}		%
\newcommand{\newop}[2]{\DeclareMathOperator{#1}{\debug{#2}}}		%

\usepackage{color-edits}
\addauthor{gb}{DodgerBlue}
\addauthor{JM}{orange}
\addauthor{JA}{red}

\newmacro{\ab}{$\alpha$-$\beta$}

\newmacro{\R}{\mathbb{R}}
\newmacro{\N}{\mathbb{N}}
\newmacro{\RR}{\mathbb{R}}
\newmacro{\bbR}{\mathbb{R}}
\newmacro{\bbN}{\mathbb{N}}

\newmacro{\E}{\mathcal{E}}
\newmacro{\I}{\mathcal{I}}
\newmacro{\mE}{m_\mathcal{E}}
\newmacro{\mI}{m_\mathcal{I}}

\newmacro{\C}{\mathbb{C}}

\newmacro{\xtilde}{\tilde x}
\newmacro{\x}{x}
\newmacro{\y}{y}
\newmacro{\stepsize}{\eta}
\newmacro{\st}{\text{s.t.}}

\newmacro{\mfR}{\Re}
\newmacro{\mfI}{\Im}
\newmacro{\RK}{R^{(k)}}

\newop{\trace}{trace}
\newop{\tr}{tr}
\newop{\dist}{dist}

\newmacro{\eqdef}{\;\stackrel{\mathclap{\normalfont\tiny\mbox{def}}}{=}\;}

\newmacro{\nhd}{\mathcal{N}}
\newmacro{\ball}{\mathcal{B}}

\newcommand{\vx}{\debug{x}}                  %
\newcommand{\vPsys}{\debug{z}}               %
\newcommand{\opt}[1][\vx]{{#1}^{\debug{\star}}}  %
\newcommand{\adh}[1][\vx]{\debug{\hat{#1}}}  %

\newcommand{\curr}[1][\vx]{{#1}_{\debug{k}}}  %

\newmacro{\Lag}{\mathcal{L}}  %

\newmacro{\idfun}{r}
\newmacro{\Actset}{\mathcal{A}}
\newmacro{\Psys}{F}
\newmacro{\Psysn}{p}
\newmacro{\Psysm}{q}

\newmacro{\Sol}{\mathcal{S}}
\newmacro{\SolDual}{\Sol_{\mathcal{D}}}

\newmacro{\dualSet}{\bbR^{\mE}\times\bbR^{\mI}_{+}}

\newmacro{\bigoh}{\mathcal{O}}

\newmacro{\D}{\operatorname{D}}     %
\newmacro{\Jac}{\operatorname{Jac}} %
\newmacro{\diag}{\operatorname{diag}}
\newmacro{\rank}{\operatorname{rank}}

\newmacro{\degree}{\operatorname{deg}}
\newmacro{\parts}{\mathcal{P}}

\date{}

\begin{document}

\maketitle

\begin{abstract}
  The Moment/Sum-of-squares hierarchy provides a way to compute the global minimizers of polynomial optimization problems (POP), at the cost of solving a sequence of increasingly large semidefinite programs (SDPs).
  We consider large-scale POPs, for which interior-point methods are no longer able to solve the resulting SDPs.
  We propose an algorithm that combines a first-order method for solving the SDP relaxation, and a second-order method on a non-convex problem obtained from the POP.
  The switch from the first to the second-order method is based on a quantitative criterion, whose satisfaction ensures that Newton's method converges quadratically from its first iteration.
  This criterion leverages the point-estimation theory of Smale and the active-set identification.
  We illustrate the methodology to obtain global minimizers of large-scale optimal power flow problems.
\end{abstract}

\begin{keywords}
  Global optimization, Polynomial Optimization, Algebraic Geometry, Lagrangian Multiplier Theory, Newton's Method.
\end{keywords}

\begin{MSCcodes}
90C22, %
90C23. %
\end{MSCcodes}

\section{Introduction}

\subsection{Context: global optimization of polynomial problems}
\label{sec:cont-glob-optim}

In this paper, we consider Polynomial Optimization Problems (POP, \cite{lasserre2001global,lasserre2015introduction}) of the form
\begin{equation}\label{eq:POP}
  \begin{aligned}
    \opt[f] =& \min_{x\in\mathbb{R}^n} & & f(x) \\
           & \text{s.t.}  & & g_i(x) \geq 0, \quad i \in \I \\
           &&& g_j(x) = 0, \quad j \in \E
  \end{aligned}
\end{equation}
where $f, g_i, g_j:\R^n \to \R$ are multivariate polynomials, and $\I$, $\E$ denote index sets.
Such problems appear in a variety of applications, ranging from power engineering \cite{ghaddarOptimalPowerFlow2016} to mechanics \cite{tyburec2021global} to optimal control.

Global optimization of polynomial problems can be tackled with the Moment / Sum-of-Squares (SoS) methodology \cite{lasserre2001global,Handbook}.
It allows formulating a hierarchy of increasingly large semidefinite relaxations.
The key feature of this methodology is that the minimum value of the convex relaxations converge to the \emph{global} minimum of the polynomial problem, in spite of its non-convex nature and the existence of many local minima.

At each step of the Moment/SoS hierarchy, we need to solve a semidefinite program (SDP).
While there are exact methods \cite{henrionExactAlgorithmsSemidefinite2021}, interior-point methods are the most commonly used for solving the SDP relaxations.
The main strength of interior-point methods is their fast quadratic convergence rate, which allows to obtain arbitrary precision minimizers in a small number of iterations independent of the problem dimension \cite{nesterov1994interior}.
However, the memory and computational cost of one iteration grows quickly with the problem dimension.
Even though this issue can be attenuated by leveraging sparsity in problem \eqref{eq:POP} \cite{magronSparsePolynomialOptimization2023,wangCSTSSOSCorrelativeTerm2021,joszLasserreHierarchyLarge2018}, the second-order relaxation of large-scale industrial instances of \eqref{eq:POP} is out of reach for current SDP solvers.
Several studies have also noted that classical interior-point solvers encounter numerical difficulties or even provide poor quality solutions when applied to relaxations of polynomial problems \cite{wakiStrangeBehaviorsInteriorpoint2012,wangCertifyingGlobalOptimality2022}.
As a result, much research has focused on designing alternative methods to solve SDPs coming from the Moment/SoS hierarchy.

A recent trend is the use of first-order methods for solving the SDP relaxations.
Their cheap iteration cost and their ability to leverage the low-rank structure of solutions make them applicable to large-scale SDPs.
However, they can only produce low-accuracy solutions, due to their sublinear convergence rates.
The popular Burer-Monteiro approach \cite{burerNonlinearProgrammingAlgorithm2003} reformulates the SDP into a non-convex problem, with guarantees on the recovery \cite{boumalDeterministicGuaranteesBurerMonteiro2020,waldspurgerRankOptimalityBurer2020}.
On these non-convex reformulations, coordinate-descent schemes are particularly efficient in finding a low-accuracy minimizer; see \eg{} \cite{marecekLowrankCoordinatedescentAlgorithm2017} for an application in power systems.
Splitting techniques, originating in nonsmooth convex optimization, also use the Burer-Monteiro approach to exploit the low-rank structure of solutions; see \eg{} \cite{o2016conic,souto2020exploiting,Garstka_2021}.
Besides, semidefinite programs with a constant trace property are common in applications, including in Moment/SoS relaxations \cite{maiExploitingConstantTrace2022}; specific first-order methods include stochastic algorithms \cite{nesterovRandomizedMinimizationEigenvalue2023,daspremontStochasticSmoothingAlgorithm2014}, the augmented lagrangian of \cite{yurtseverScalableSemidefiniteProgramming2021}, and spectral bundle methods that may incorporate second-order information \cite{helmbergSpectralBundleMethod2000,helmbergSpectralBundleMethod2014,nollSpectralBundleMethods2005,haaralaGloballyConvergentLimited2007}.
Finally, one may employ (inexact and accelerated) projected gradient methods \cite{yangInexactProjectedGradient2022}.
The complexity of the above-mentioned first-order methods is not well understood, in practice they attain relative suboptimality accuracies of $10\%$ to $1\%$; see \eg{} \cite{yurtseverScalableSemidefiniteProgramming2021,maiExploitingConstantTrace2022}.
A notable exception is the stochastic algorithms \cite{nesterovRandomizedMinimizationEigenvalue2023,daspremontStochasticSmoothingAlgorithm2014} which come with nonasymptotic guarantees; \eg{} \cite[Alg. 2.1]{nesterovRandomizedMinimizationEigenvalue2023} attains a relative suboptimality of $\delta$ in $\bigoh(\delta^{-2})$ iterations.
This slow convergence seem to be inherent in first-order methods.

Independently, it is possible to compute local minimizers of POP to high accuracy using Newton's method,
which underlies the so-called homotopy-continuation approach \cite{alexander1978homotopy,allgower2003introduction,bates2013numerically}.
Indeed, Newton's method converges to solutions at a fast quadratic --or doubly exponential-- speed, which allows to obtain solutions with arbitrary precision in a small number of iterations, independent of conditioning or dimension.
However, \emph{(i)} in order to benefit from the fast quadratic rate, the method must be started close enough to a minimizer, and \emph{(ii)} it only works on smooth problems: the POP is nonsmooth, because of the inequality constraints.

In this paper, we propose an algorithm that \underline{\it provably converges to the global minimizer} of \eqref{eq:POP} with \underline{\it local quadratic speed}, as discussed below.
The algorithm combines a first-order method on the convex relaxation and Newton's method on the polynomial problem.
\Cref{fig:algoscheme} shows the typical behavior of the proposed algorithm on the small but challenging WB2 instance of Alternating Current Optimal Power Flow \cite{bukhshLocalSolutionsOptimal2013}, a power system polynomial problem detailed in \cref{sec:experiments}.

\begin{figure}[h]
  \centering
    \includegraphics[width=0.76\textwidth]{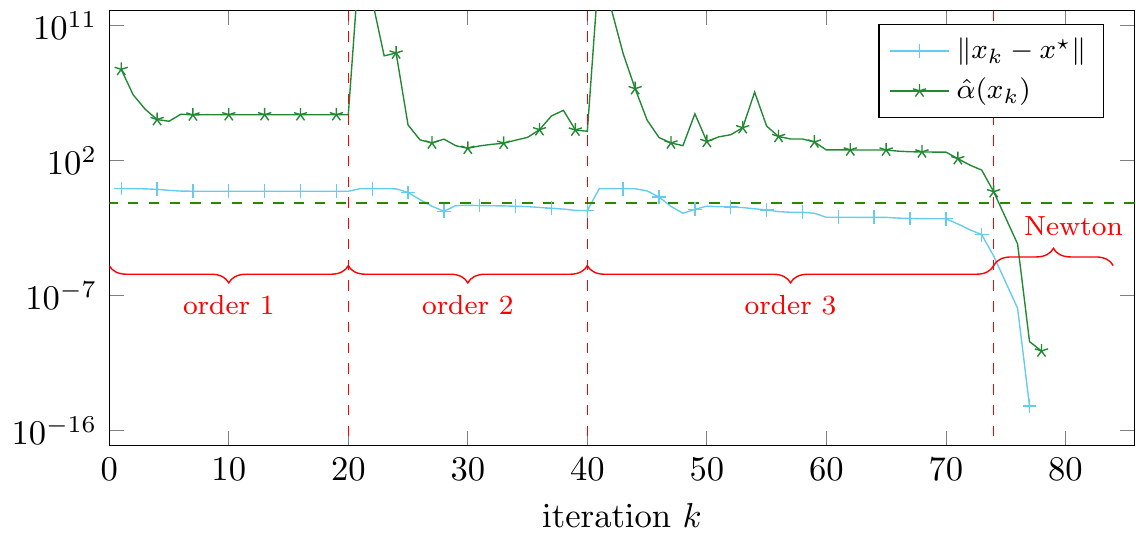}
 \caption{
   Typical behavior of the proposed \cref{alg:hybrid} on the WB2 instance of ACOPF (see \cref{sec:experiments}): the method partially solves the SDP relaxations of order $1$, $2$, and $3$ until the $\alpha$-test is satisfied ($\hat{\alpha}(x_{74}) \le 0.15$), at which point it switches to Newton's method.
   The SDP relaxations are solved with the Mosek.
   Newton's method \emph{converges quadratically right from the first iteration}.
   This is a major distinction with traditional two-phases methods: there, the switch from the two phases is usually heuristic, so that the Newton-based second phase is guaranteed to converge at quadratic rate only after a finite \emph{but unknown} number of iterations; see \cref{fig:toypb_slowNewton}.
    \label{fig:algoscheme}
 }
\end{figure}

\begin{example}[Two-dimensional example]%
  \label{ex:toypb}
  We introduce a two-dimensional pop that will support illustrations along the development, defined as follows:
  \begin{equation}%
    \label{eq:toypbfuns}
    \begin{aligned}
      & \min_{x\in\bbR^{2}}
      & & f(x) = -2.5x_{1}^2 + 3x_{1}x_{2} - 2.5x_{2}^2 - 3x_{1} + 5x_{2} - 2.5 \\
      & \text{s.t.} &&
                       g_{1}(x) = -0.5x_{1}^3 + x_{2} \ge 0 \\
      &&& g_{2}(x) = -0.05x_{1}^2 - x_{2} + 1.8 \ge 0 \\
      &&& g_{3}(x) = -0.05x_{2}^2 + x_{1} + 0.1x_{2} + 0.35 \ge 0
    \end{aligned}
  \end{equation}
  \Cref{fig:toypb_levels} shows the level lines of the objective and the boundaries of the constraint set.
  The unique minimizer is $\opt \approx (0.83271, -0.71130)$, with function value $f(\opt) \approx -4.77529$.
\end{example}

\begin{figure}[h]
  \centering
  \subfloat[]{
    \label{fig:toypb_levels}
    \includegraphics[width=0.46\textwidth]{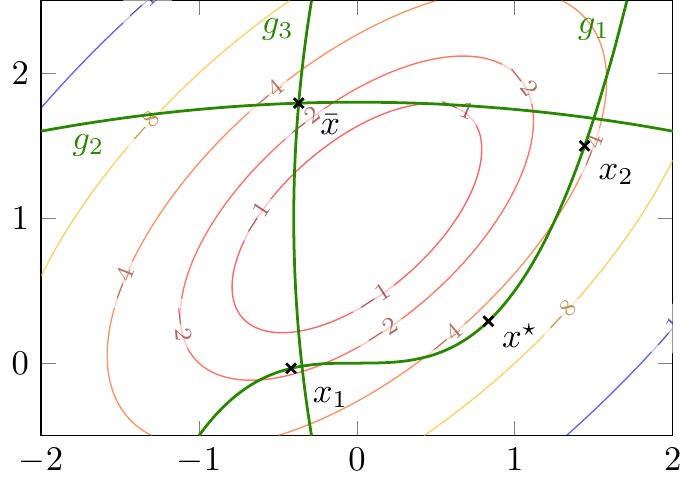}
  }
  \subfloat[]{
    \label{fig:toypb_slowNewton}
    \includegraphics[width=0.46\textwidth]{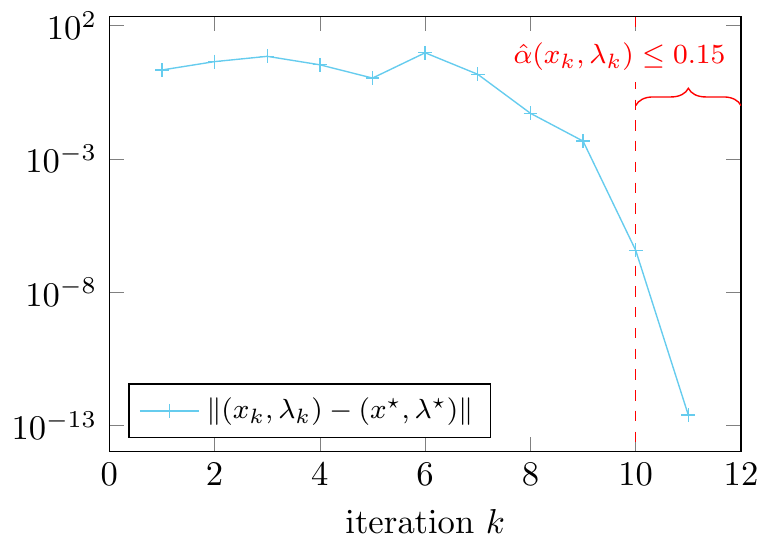}
  }
 \caption{
   The left pane displays problem \eqref{eq:toypbfuns}: the three green lines show the points which cancel each constraint polynomial, and the level curves represent the objective.
   The global minimum is $\opt$, $\bar{x}$ is a local minimum.
   The active constraint at point $\opt$ is $g_{1}$.
   Furthermore, points $x_{1}$ and $x_{2}$ are stationary points for the reduced problem $\min f$ s.t. $g_{1} = 0$, revealing that the second-order method may be attracted by other points than the global minimizer.
   The right-pane displays the distance between the iterates of Newton's method $(\curr, \curr[\lambda])$ and the optimal primal dual pair $(\opt, \opt[\lambda])$, when applied to the KKT conditions of the reduced problem $\min f$ s.t. $g_{1} = 0$.
   Newton's method fails to produce a significant improvement during the first $9$ iterations, and only then enters the fast convergence regime.
   This illustrates that the quadratic rate of Newton's method may only occur after a (possibly large) number of iterations, hence the interest of the \ab{} criterion.
    \label{fig:toypb}
 }
\end{figure}

\subsection{Active constraints identification and Alpha-Beta theory}
\label{sec:alpha-beta-theory}

First-order methods applied to convex relaxations can provide efficiently a low-accuracy approximation of the global minimizer of \eqref{eq:POP}.
Newton's method can be used to refine its accuracy, with the following tools.

\paragraph{Active constraint identification}
Minimizers of \eqref{eq:POP} are characterized by a system of nonlinear equations and inequations, the so-called KKT conditions.
However, Newton's method solves systems of smooth equations only, and thus cannot be applied directly.
At a minimizer, the inequality constraints split in the \emph{active constraints}, which are exactly null, and the \emph{inactive constraints}, which have a positive value.%
If the inequalities active at a minimizer are known, then, near this minimizer, the problem simplifies: the inactive constraints can be discarded while the active constraints set to zero without changing the solution.
On \cref{ex:toypb}, illustrated in \cref{fig:toypb_levels}, one can replace near $\opt$ the constraints $g_{1}, g_{2}, g_{3}\ge0$ by $g_{1}=0$ while preserving $\opt$ as a solution.
The obtained \emph{reduced problem} thus features only equalities: its minimizers are characterized by a system of equations, which can now be solved by Newton's method.
In order to make this approach practical, the set of active constraints should be obtained without knowledge of the minimizer.
Under some qualifying conditions, it can be detected from  any point in a neighborhood of the minimizer; see \eg{} \cite{oberlinActiveSetIdentification2006} for a generic take.

\paragraph{\ab{} theory} The point-estimation or \ab{} theory provides a computable criterion that ensures that, at a given point, \emph{i)} there exists a nearby zero of the systems of equations at hand, and \emph{ii)} that Newton's method converges to that zero with a quadratic rate from the first iteration \cite{Cucker1999,ShubSmale1993}.
This criterion can be seen as a quantitative version of classical results, which guarantee only eventual quadratic convergence of Newton's method, under some difficult-to-check conditions; see \eg{} \cite[Th. 13.6]{bonnansNumericalOptimizationTheoretical2006} and references therein.
In particular, away from minimizers, Newton's method for unconstrained minimization can be as slow as gradient descent \cite{cartisComplexitySteepestDescent2010}; see \cref{fig:toypb_slowNewton} for an illustration.

\subsection{Contributions and outline}
\label{sec:contributions}

The purpose of this paper is two-fold.
First, we give an overview and results of the fields of convex relaxations of polynomial problems, activity identification, and point-estimation theory.
We believe that nice interactions are possible between these different communities.
Second, we combine these methodologies in a hybrid algorithm for global optimization of polynomials, with quadratic local convergence.
Notably, the \ab{} theory provides a \emph{theoretically grounded} practical test to perform the switch from the first-order to the second-order method, where traditional two-phase methods rely on heuristic switch conditions.
The proposed hybrid algorithm enjoys the following guarantees; see \cref{th:algocv} for a precise statement.
\begin{theorem}[Main result, Informal version]
  Consider a POP \eqref{eq:POP} that admits a unique global minimizer $\opt$.
  Then, the iterates of the forthcoming \cref{alg:hybrid} converge to the global minimizer.

  Under classical assumptions on $\opt$, \cref{alg:hybrid} eventually detects the correct active set, switches to Newton's method, and converges quadratically.
\end{theorem}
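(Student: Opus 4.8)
The plan is to decompose the statement into two regimes and handle them separately. For the first (global) claim, I would argue that the algorithm cannot stall inside any fixed relaxation order: the first-order method driving the SDP relaxation of a fixed order $r$ reduces the suboptimality of that relaxation's iterates to an arbitrarily small tolerance, and the switching criterion ($\hat\alpha$-test plus the activity-identification test) is checked at every iterate. So either the test eventually fires — in which case we transfer to Newton's method, see below — or the algorithm increases the relaxation order infinitely often. In the latter case, since the Moment/SoS hierarchy converges to $\opt[f]$ and, by uniqueness of the global minimizer $\opt$, the moment sequences extracted from near-optimal solutions of the order-$r$ relaxation converge to the moment sequence of $\delta_{\opt}$ (this is the standard Lasserre extraction/convergence argument, which I would cite), the iterates fed into the $\hat\alpha$-test converge to $\opt$. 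I would then need a continuity/lower-semicontinuity property of the rounding map that produces a candidate primal point $x$ from a moment matrix, so that these candidate points themselves converge to $\opt$; this gives convergence of the iterates to the global minimizer in all cases.

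For the second (local) claim, I would invoke the ``classical assumptions on $\opt$'' — which I expect to be: LICQ at $\opt$, strict complementarity, and the second-order sufficient condition for the reduced equality-constrained problem. Step one: under these assumptions, activity identification (in the spirit of \cite{oberlinActiveSetIdentification2006}) guarantees a neighborhood $\nhd$ of $\opt$ on which the correct active set $\Actset(\opt)$ is returned by the identification test. Since the iterates converge to $\opt$ by the first part, they enter $\nhd$ after finitely many steps, so the algorithm forms the correct reduced KKT system $\Psys$. Step two: strict complementarity plus LICQ plus SOSC make the reduced KKT system a smooth square system with invertible Jacobian at the primal-dual solution $(\opt,\opt[\lambda])$; hence $\Psys$ is analytic with $\D\Psys(\opt,\opt[\lambda])$ nonsingular, so Smale's $\alpha$-theory applies and there is a neighborhood on which $\hat\alpha < \alpha_0$ (the $\alpha$-theory threshold). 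Step three: combine — the iterates enter the intersection of these neighborhoods, the $\hat\alpha$-test fires, the algorithm switches, and Smale's theorem yields quadratic convergence of Newton's method from that first iterate onward, which also certifies that the limit is exactly $\opt$.

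The main obstacle, I expect, is the first part — specifically, linking suboptimality of the SDP \emph{iterates} (not exact optima) to closeness of the \emph{rounded primal candidate} to $\opt$. The clean Lasserre convergence theory is stated for exact optimal moment matrices; here the first-order method only returns approximately feasible, approximately optimal moment matrices, and one must show the extraction procedure is stable under these perturbations. This likely requires either a quantitative flat-extension/rank argument or an explicit assumption that near-optimal moment matrices have bounded ``distance to the moment cone'' in a way that controls the rounding error — I would state this as a hypothesis or prove it under the finite-convergence (Curto–Fialkow flatness) condition for the hierarchy. A secondary, more routine obstacle is bookkeeping the two interleaved tolerances (the SDP suboptimality tolerance $\delta_k \to 0$ and the $\hat\alpha$-threshold) so that the switch is triggered before the relaxation order is needlessly increased; this is handled by an interleaving schedule and does not affect the asymptotics.
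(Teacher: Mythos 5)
Your decomposition and both main steps match the paper's own argument (formalized in \cref{th:algocv}, proved via \cref{prop:momentrel}, \cref{th:identif}, and \cref{lmm:quadcvreducedpb}), so the structure is sound. The one substantive difference is that the ``main obstacle'' you flag in the first part is a non-issue in the paper's setting, precisely because of the uniqueness assumption: the paper does not use the flat-extension/rank-based extraction of Henrion--Lasserre at all, but simply reads off the degree-one moments, $x^{k} = (L_{y^{k}_{r}}(X_{1}),\dots,L_{y^{k}_{r}}(X_{n}))$, which is a linear (hence trivially continuous) map of the moment sequence; and the convergence result it cites (\cref{prop:momentrel}, item ii, following Schweighofer) is already stated for \emph{nearly} optimal solutions of the relaxation, e.g.\ $L_{y_{r}}(f)\le\rho_{r}+1/r$, so no stability-of-extraction or Curto--Fialkow flatness argument is needed. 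Your proposed detour through quantitative flat extension would only become necessary in the multiple-minimizer setting, which the paper explicitly defers. Two smaller points: the ``classical assumptions'' in the paper are LICQ and the second-order growth condition only --- strict complementarity is \emph{not} assumed, and indeed the identification result (\cref{th:identif}) is designed to work without it (under MFCQ alone, with possibly non-unique multipliers); since the reduced problem is equality-constrained, invertibility of the KKT Jacobian follows from LICQ and second-order growth without any sign condition on the multipliers. Finally, your step three should also account for the dual initialization: the paper starts Newton from the least-squares (Fletcher) multiplier $\lambda_{\Actset}(x^{k})$ and uses its Lipschitz continuity near $\opt$ (under LICQ) to conclude that the primal-dual pair, not just the primal iterate, enters the neighborhood where the $\hat{\alpha}$-test certifies quadratic convergence to $(\opt,\opt[\lambda])$ rather than to some other stationary point of the reduced problem.
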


The outline of the remainder of the paper is as follows.
First, in \Cref{sec:momentsoshierarchy}, we introduce the hierarchy of moment relaxations and recall the main convergence result.
In \Cref{sec:active-set-estim}, we develop a procedure specific to polynomial optimizations problems for identifying the constraints active at a minimizer from neighbouring points.
While such techniques are well-known for non-linear programs involving $\mathcal{C}^{2}$-smooth functions, we show identification under mild assumptions, by leveraging the algebraic nature of the problem.
In \Cref{sec:algebr-geom-point}, we review the main results of the \ab{} theory and present the computable criterion that guarantees the fast convergence of Newton's method.
In \Cref{sec:hybr-meth-polyn}, we detail the proposed algorithm combining first-order method on the SDP and second-order method on the POP, and give convergence guarantees.
Finally, we present in \Cref{sec:experiments} numerical illustration of the hybrid method.

\paragraph{Notations}%
\label{sec:notation}

We let $x_{+} \eqdef \max(x, 0)$ denote the (pointwise) positive part of $x$, and $\|\cdot\|$ denotes the standard euclidean norm.

A degree $d$ polynomial of variable $x\in\bbR^{\Psysn}$ is written as
  $g(x) = \sum_{|\nu |\leq d} a_{\nu} x^{\nu}$,
where $\nu = (\nu_{1}, \ldots, \nu_{\Psysn})\in\bbN^{\Psysn}$ denote exponents, $x^{\nu} = x_1^{\nu_1} x_2^{\nu_2} \cdots x_{\Psysn}^{\nu_{\Psysn}}$ denote monomials, and we let $|\nu| \eqdef \sum_{i=1}^{\Psysn} \nu_i$.
We use the following polynomial norm
\begin{equation}
  \|g\|_{p}^{2} \eqdef \sum_{|\nu|\leq d} |a_\nu|^2 \frac{\nu_{1}! \ldots \nu_{n}! (d-|\nu|)!}{d!}.
\end{equation}
In turn, the norm of a system of polynomial equations $\Psys:\bbR^{\Psysn} \to \bbR^{\Psysm}$ defines as $\|\Psys\|_{p} \eqdef \left(\sum_i\|\Psys_i\|_{p}^2 \right)^{1/2}$.
Finally, we let $\|x\|_1^{2} \eqdef 1+\sum_{i=1}^n x_i^2$.

\section{Global optimization with convex relaxation hierarchies}%
\label{sec:momentsoshierarchy}

In this section, we recall the hierarchy of moment relaxations and state known results.
We follow the notations of the book \cite{lasserre2015introduction}, and refer to it for an in-depth treatment of this topic.

We first introduce relevant objects.
We let $\bbR[X]$ denote the set of polynomials in variable $X = (X_{1}, \ldots, X_{n})$ with real coefficients, and $\bbN_{d}^{n}$ denote the set of $n$-tuples $(\alpha_{1}, \ldots, \alpha_{n})$ whose elements sum to $d$ at most.
With $y = (y_{\alpha})_{\alpha\in\N^{n}}$ a sequence indexed by exponents in $\N^{n}$, $L_{y}:\bbR[X] \to \bbR$ denotes the \emph{Riesz} linear functional $f=\sum_{\alpha} f_{\alpha}X^{\alpha} \mapsto \sum_{\alpha}f_{\alpha}y_{\alpha}$.
The $d$-th order \emph{moment matrix} $M_{d}(y)$ associated with $y=(y_{\alpha})_{\alpha}$ is defined by $M_{d}(y)[\alpha, \beta] \eqdef L_{y}(X^{\alpha} X^{\beta}) = y_{\alpha+\beta}$, for all $\alpha, \beta\in\N_{d}^{n}$, and the $d$-th order \emph{localizing matrix} $M_{d}(gy)$ associated with $y$ and $g = \sum_{\alpha}g_{\alpha}X^{\alpha}$ is defined by $M_{d}(gy)[\alpha, \beta] \eqdef L_{y}(g  X^{\alpha} X^{\beta}) = \sum_{\delta} g_{\delta}y_{\alpha+\beta+\delta}$, for all $\alpha, \beta\in\N_{d}^{n}$.
Finally, we let $r_{j} \eqdef \lceil (\deg g_{j})/2 \rceil$ for $j\in\I\cup\E$.

The \emph{order-$r$ moment relaxation} of \eqref{eq:POP}, for $r \ge r_{0} \eqdef \max( \lceil (\deg f)/2 \rceil, r_{j} )$, defines as the following SDP:
\begin{equation}%
  \label{eq:sdprel}
  \begin{aligned}
    \rho_{r} = & \inf_{y \in \bbR^{\bbN^{n}_{2r}}}
    & & L_{y}(f) \\
            & \text{s.t.}
    & & M_{r}(y) \succeq 0 \\
            &&& M_{r - r_{j}}(g_jy) = 0, \quad j \in \E \\
            &&& M_{r - r_{j}}(g_jy) \succeq 0, \quad j \in \I \\
            &&& y_{0} = 1
  \end{aligned}
\end{equation}
This is a semidefinite program in dual form, which optimizes over sequences indexed by exponents of degree up to $2r$.

We can now state the main convergence result for the sequence of moment relaxations \eqref{eq:sdprel}: the optimal values $\rho_{r}$ converge to $f^{\star}$ from below, and the first degree coefficients of a solution of \eqref{eq:sdprel} converge to the global minimizer as $r \to \infty$, as long as the global minimizer is unique.
\begin{proposition}[Convergence of the moment sequence]\label{prop:momentrel}
  Consider a POP, and the semidefinite relaxations \eqref{eq:sdprel}.
  Then,
  \begin{enumerate}
    \item $\rho_{r} \uparrow f^{\star}$ as $r\to\infty$,
    \item Let $y_{r}$ be a nearly optimal solution of \eqref{eq:sdprel}, with \eg $L_{y_{r}}(f) \le \rho_{r} + \frac{1}{r}$. \label{prop:momentrelii}
      If $P$ has a unique minimizer $\opt$, then $L_{y_{r}}(X_{j}) \to \opt_{j}$ as $r\to\infty$.
  \end{enumerate}
\end{proposition}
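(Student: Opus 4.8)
The plan is to follow the by-now classical argument of Lasserre, separating the elementary part (monotonicity of $(\rho_r)$ and the upper bound $\rho_r \le f^{\star}$) from the substantive part (the reverse inequality, which rests on Putinar's moment-representation theorem and hence on a compactness/Archimedean hypothesis on the feasible set $K$ of \eqref{eq:POP}).

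First I would establish that $(\rho_r)$ is nondecreasing and bounded above by $f^{\star}$. The objective $L_y(f)$ involves only the entries $y_\alpha$ with $|\alpha| \le \deg f \le 2r$, and each of $M_r(y)$, $M_{r-r_j}(g_j y)$ is a principal submatrix of $M_{r+1}(y)$, $M_{r+1-r_j}(g_j y)$; hence truncating a feasible point of the order-$(r{+}1)$ relaxation to degrees $\le 2r$ yields a feasible point of the order-$r$ relaxation with the same cost, so $\rho_r \le \rho_{r+1}$. For the bound, given any $x \in K$ the moment sequence $y_\alpha = x^\alpha$ has $M_r(y) = v v^{\top} \succeq 0$ (with $v$ the vector of monomials of degree $\le r$) and $M_{r-r_j}(g_j y) = g_j(x)\, v' v'^{\top}$, which is PSD when $g_j(x) \ge 0$ and vanishes when $g_j(x) = 0$; thus $y$ is feasible with $L_y(f) = f(x)$, giving $\rho_r \le f(x)$ and so $\rho_r \le f^{\star}$. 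Consequently $\rho_r \uparrow \rho_\infty$ for some $\rho_\infty \le f^{\star}$.

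Next, to prove $\rho_\infty \ge f^{\star}$ I would pass to a limit of the near-optimal moment sequences $(y_r)$. Under the Archimedean assumption a certificate $N - \sum_i X_i^2 = \sigma_0 + \sum_j \sigma_j g_j$ with $\sigma$'s sums of squares, combined with the positive semidefiniteness of the moment and localizing matrices, yields a bound $|L_{y_r}(X^\alpha)| \le C_{|\alpha|}$ uniform in $r$; a diagonal extraction then gives a subsequence along which $L_{y_r}(X^\alpha) \to y^\infty_\alpha$ for every $\alpha$. The limit $y^\infty$ satisfies $M_d(y^\infty) \succeq 0$, $M_d(g_j y^\infty) \succeq 0$ (resp. $= 0$) for all $d$, and $y^\infty_0 = 1$, so by Putinar's representation theorem it is the moment sequence of a probability measure $\mu$ supported on $K$. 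Since $\rho_r \le L_{y_r}(f) \le \rho_r + \frac{1}{r}$ and $L_{y_r}(f) \to L_{y^\infty}(f) = \int f\, d\mu$ along the subsequence, we get $\int f\, d\mu = \rho_\infty$; as $\mu$ is supported on $K$, $\int f\, d\mu \ge f^{\star}$, whence $\rho_\infty \ge f^{\star}$. This proves the first claim.

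Finally, for the second claim, the equality $\int f\, d\mu = f^{\star} = \rho_\infty$ with $\mu$ supported on $K$ forces $\mu$ to be supported on the set of global minimizers of \eqref{eq:POP}; if this set is the singleton $\{\opt\}$, then $\mu = \delta_{\opt}$, so $L_{y^\infty}(X_j) = \opt_j$. Running the same bound-and-extract argument on an arbitrary subsequence of $(y_r)$ shows that every subsequence has a further subsequence along which $L_{y_r}(X_j)$ converges, and the uniqueness argument forces the limit to be $\opt_j$; hence the whole sequence $L_{y_r}(X_j)$ converges to $\opt_j$. The only place where genuine work is needed — and thus the main obstacle — is the uniform moment bound and the appeal to Putinar's representation theorem, both of which require the compactness/Archimedean hypothesis on $K$; I would make this hypothesis explicit and refer to \cite{lasserre2015introduction} for the quantitative estimates and the representation theorem.
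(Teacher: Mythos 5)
Your proof is correct and is essentially the standard Lasserre--Putinar argument; the paper itself does not prove the proposition but simply defers to \cite[Th.~6.2]{lasserre2015introduction} and \cite[Th.~12 \& Cor.~13]{schweighofer2005optimization}, which is exactly the argument you reconstruct (monotonicity and $\rho_r \le f^\star$ by restriction and by evaluation at feasible points; uniform moment bounds plus a diagonal extraction and Putinar's representation theorem for $\rho_r \uparrow f^\star$; and the observation that $\int f\,d\mu = f^\star$ forces $\mu = \delta_{\opt}$ when the minimizer is unique, upgraded to full-sequence convergence by the subsequence principle). One point worth emphasizing: you are right that the Archimedean/compactness hypothesis on the feasible set is genuinely needed and is left implicit in the paper's statement --- the cited theorems all assume it, and without it the hierarchy need not converge. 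Flagging that hypothesis explicitly, as you do, is the only substantive addition over the paper's treatment; the rest matches the cited proofs step for step.
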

\begin{proof}
  This result is nicely summarized by \cite[Th. 6.2]{lasserre2015introduction} and \cite[Th. 12 \& Cor. 13]{schweighofer2005optimization},
  based on the original work of Lasserre \cite{lasserre2001global}.
\end{proof}

\begin{example}[Moment hierarchy]%
  \label{ex:toypb-moment}
  We apply the moment hierarchy \eqref{eq:sdprel} on the problem introduced in \cref{ex:toypb}.
  \Cref{tab:toypbhierarchy} shows for the order $r=2$ and $3$ the optimal value, the extracted point $(L_{y_{r}}(X_{1}), L_{y_{r}}(X_{2}))$, and the measure extracted from the solution $y_{r}$ by the procedure in \cite{henrion2005detecting}.
  \Cref{tab:toypbextractdata} details the extracted measures.
  The correct function value and minimizer are obtained at the third order of the relaxation.
  \Cref{fig:toypb_SDPiterates_a} shows the trajectory $(L_{y^{k}_{3}}(x_{1}), L_{y^{k}_{3}}(x_{2}))$ in the pop space, where $(y_{3}^{k})_{k}$ denotes the iterates of a first-order method (here the Primal Dual Hybrid Gradient algorithm \cite{souto2020exploiting}) applied to \eqref{eq:sdprel} with $r=3$.
  \Cref{fig:toypb_SDPiterates_b} shows the distance between the iterate $(L_{y^{k}_{3}}(X_{1}), L_{y^{k}_{3}}(X_{2}))$ in the pop space and the global minimizer.
\end{example}

\begin{table}[tbp]
  \footnotesize
  \caption{
    Order two and three relaxations of problem~\eqref{eq:toypbfuns}: optimal value $\rho_{r}$, simple minimizer extraction as described in \cref{prop:momentrel}ii), and refined extraction yielding a combination of Dirac measures \cite{henrion2005detecting}; see \cref{rmk:minextraction}.
    The third order relaxation is exact.
    \Cref{tab:toypbextractdata} describes points $y_{1}$, $y_{2}$, and $y_{3}$.
    \label{tab:toypbhierarchy}
  }
  \begin{center}
      \begin{tabular}{lccc}
        Relaxation order $r$ & $\rho_r$ & $(L_{y_{r}}(X_{1}), L_{y_{r}}(X_{2}))$ & refined extraction \cite{henrion2005detecting} \\ \hline
        $2$ & $-29.34644$ & $(0.017, -0.563)$ & $0.18 \delta_{y_{1}} + 0.82 \delta_{y_{2}}$\\
        $3$ & $-4.77529$ & $y_3$ & $\delta_{y_{3}}$
      \end{tabular}
  \end{center}
\end{table}

\begin{table}[tbp]
  \footnotesize
  \caption{Points extracted from relaxations of problem \eqref{eq:toypbfuns} with the procedure outlined in \cite{henrion2005detecting}.
    Note that for each point, some of the constraints are active (up to the solver accuracy of about $10^{-9}$).
    They are denoted by $\approx 0$.
  }\label{tab:toypbextractdata}
  \begin{center}
    \begin{tabular}{ccc|ccc}
      point $p$ & point value & f(p) & \multicolumn{3}{c}{$g(y_r)$} \\ \hline
      $y_{1}$ & $(1.69926, -5.47959)$ & $-145$ & $-7.9$ & $7.1$ & $\approx 0$ \\
      $y_{2}$ & $(-0.36854, 1.79321)$ & $-2.8$ & $1.8$ & $\approx 0$ & $\approx 0$ \\
      $y_{3}$ & $(0.83271,  0.28870)$ & $-4.77529$ & $\approx 0$ & $1.5$ & $1.2$
    \end{tabular}
  \end{center}
\end{table}

\begin{figure}[tbp]
  \centering
  \subfloat[]{\label{fig:toypb_SDPiterates_a}\includegraphics[width=0.48\textwidth]{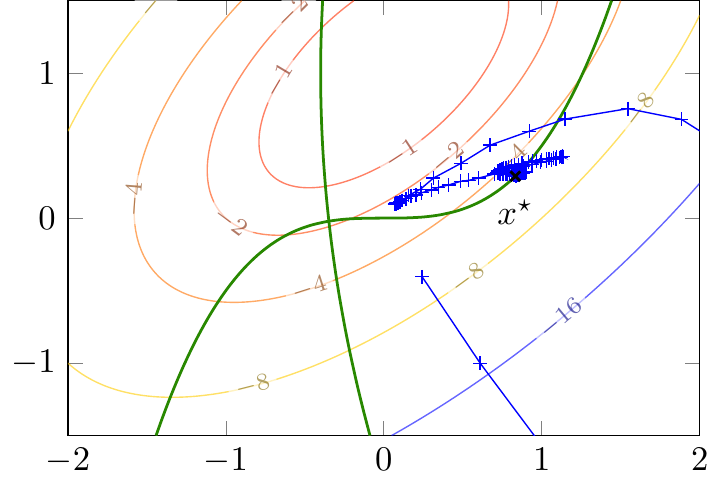}}
  \subfloat[]{\label{fig:toypb_SDPiterates_b}\includegraphics[width=0.48\textwidth]{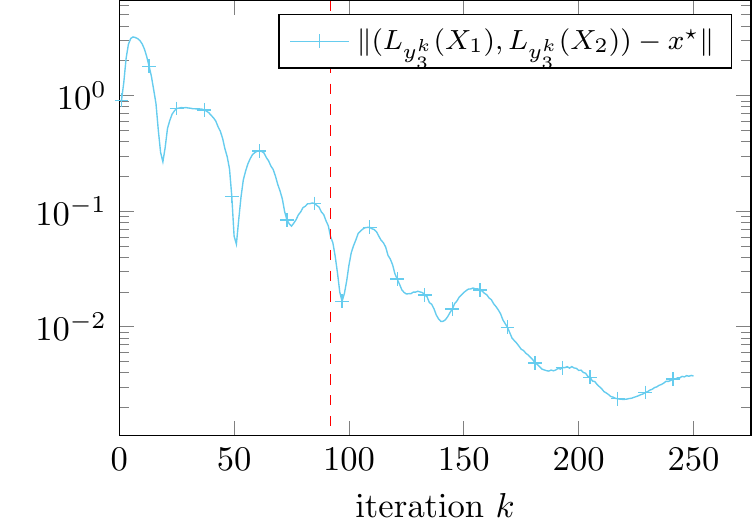}}
  \caption{%
    Behaviour of the iterates generated by a first-order SDP solver on the order-$3$ moment relaxation of \eqref{eq:toypbfuns}.
    The left pane represents the points $(L_{y_{3}^{k}}(X_{1}), L_{y_{3}^{k}}(X_{2}))_{k}$ in the POP space, that correspond to the SDP iterates $(y_{3}^{k})_{k}$; see \cref{prop:momentrel} item ii).
    The right pane show the distance between the extracted points in the POP space and the global minimizer $\opt$ relative to iterations.
    The dashed red line indicates the first time the correct active set is detected.
    }
  \label{fig:toypb_SDPiterates}
\end{figure}

\begin{remark}[Moment relaxation variants]
  We present here the moment relaxation in its simplest form, as first introduced by \cite{lasserre2001global}.
  In an effort to make this approach applicable to large polynomial problems, \cite{ghaddarOptimalPowerFlow2016,wangCSTSSOSCorrelativeTerm2021} among others propose variants that use forms of sparsity in \eqref{eq:POP}, \cite{jeyakumarSemidefiniteProgrammingRelaxation2016} modify the hierarchy to handle noncompact feasible sets while still exploiting sparsity, \cite{joszLasserreHierarchyLarge2018} propose a complex moment problem for polynomial problems that are naturally cast in complex variables, and \cite{Navascués_2008} extends the approach to polynomial optimization in noncommutative variables.
\end{remark}

\begin{remark}[Finite convergence of the moment relaxation]
  In practice, one regularly observes that the values $\rho_{r}$ of the relaxation converge finitely to $\opt[f]$: it often happens that with $\bar{r}=1$ or $2$, with $\rho_{\bar{r}} = f^{\star}$.
  Theorem 6.5 in \cite{lasserre2015introduction} gives sufficient conditions for finite convergence to happen: the minimizer $\opt$ should satisfy the linear independence constraint qualification, strict complementarity, and the second-order sufficient condition, and $M - \|x\|^{2}$ should write as $\sum_{i\in\E}\psi_{i}g_{i} + \sum_{i\in\I}\sigma_{i}g_{i}$ for some $M>0$, some polynomials $\psi_{i}$, and some sum-of-square polynomials $\sigma_{i}$.
  As a complementary result, \cite{nieOptimalityConditionsFinite2014} shows that if the feasible set in included in some ball ---the so-called archimedean condition---, then the finite convergence of the moment relaxation happens generically (\ie{} for any input data $f, g$ for \eqref{eq:POP} except on a set of Lebesgue measure zero).
  With a suitable modification, the moment relaxation also enjoys a generic finite convergence property when the feasible set of \eqref{eq:POP} is not compact \cite{jeyakumarPolynomialOptimizationNoncompact2014}.
\end{remark}

\begin{remark}[Extraction of multiple minimizers]\label{rmk:minextraction}
  \cref{prop:momentrel} provides the simplest way to convert approximate minimizers of the moment problems \eqref{eq:sdprel} to points for the POP \eqref{eq:POP} in a convergent way.
  This relies on uniqueness of the global minimizer.
  For polynomial problems \eqref{eq:POP} which admit $p > 1$ distinct minimizers, one may recover the $p$ minimizers from the exact solution of the SDP moment relaxation using the procedure outlined in~\cite{henrion2005detecting}.
  It requires that the exactness of the moment relaxation, and that the moment matrix satisfies a certain rank condition.
  Whether this procedure holds in a more general setting of inexact relaxations, with low-quality solutions of (possibly inexact) relaxations, or when there are an infinite number of minimizers is unclear; see \eg \cite{klepMinimizerExtractionPolynomial2018,laurentSumsSquaresMoment2009}.
  In a similar vein, \cite{henrionAlgebraicCertificatesTruncated2023} focuses on deciding when a finite sequence $(y_{\alpha})_{\alpha}$ is the (truncated) sequence of moments of some measure.
  They provide a way to either recover the measure, or to generate a certificate that no such measure exist.
\end{remark}

\section{Active set estimation for analytic problems}
\label{sec:active-set-estim}

In this section, we propose an identification procedure that leverages the algebraic nature of problem~\eqref{eq:POP}.
We do so by combining the ideas of \cite{facchinei1998accurate} and \cite{oberlinActiveSetIdentification2006}.

\begin{definition}
    We define the Lagrangian of \eqref{eq:POP} in the usual way as
    \begin{equation}
  \Lag(x, \lambda) \eqdef f(x) - \lambda^{\top} g(x),
\end{equation}
where $\lambda\in \dualSet$ is the Lagrange multiplier vector.
\end{definition}

First-order necessary conditions, or Karush-Kuhn-Tucker conditions (KKT), for $\opt$ to be a solution of \eqref{eq:POP}, under a qualification condition, are that there exist $\opt[\lambda]$ such that
\begin{equation}\label{eq:KKT}
  \begin{aligned}
  \nabla_{x} \Lag(\opt, \opt[\lambda]) &= 0, \\
  g_{\E}(\opt) &= 0, \\
  0 \le g_{\I}(\opt) &\perp \lambda_{\I}^{\star} \ge 0,
  \end{aligned}
\end{equation}
where $a \perp b$ means $a^{\top} b = 0$, which implies here that $g_{i}(\opt)\lambda_{i}^{\star} = 0$ for $i \in \I$.
The dual solution set is $\SolDual \eqdef \{ \opt[\lambda] \; \text{satisfying} \; \eqref{eq:KKT}\}$, while the primal dual solution set is $\Sol \eqdef \{\opt\} \times \SolDual$.

\paragraph{Qualification condititons}
We work in this section under the classical Mangasarian-Fromovitz constraint qualification (MFCQ): there is a vector $v\in\bbR^{n}$ such that
\begin{equation}\label{eq:MFCQ}
  \begin{aligned}
    \nabla g_{i}(\opt)^{\top} v < 0, \quad i\in\opt[\Actset]; \quad \nabla g_{i}(\opt)^{\top} v = 0, \quad i \in \E, \\
    \{\nabla g_{i}(\opt)\}_{i\in\E} \; \text{is linearly indepent.}
  \end{aligned}
\end{equation}
This is a mild assumption, that holds for generic problem coefficients.
This condition ensures that, if $\opt$ is a local minimizer, then it satisfies the KKT conditions \eqref{eq:KKT}, and that the dual solution set $\SolDual$ is bounded.

\paragraph{Active set estimation}
We consider the following measure of the KKT residual at point $(x, \lambda)$:
\begin{equation}
  \idfun(x, \lambda) \eqdef \left \|\nabla_{x} \Lag(x,\lambda) \right \| + \|g_{\E}(x)\| + \| \left[-g_{\I}(x)\right]_{+}\| + |\lambda_{\I}^T g_{\I}(x)|  +  \|[-\lambda_\I]_{+}\|,
\end{equation}
and recall next an error-bound result for analytic optimization problems \cite[Th. 5.1]{luoErrorBoundsAnalytic1994}.
\begin{proposition}\label{prop:errorbound}
  For each compact subset $\Omega\subset \bbR^{n}\times\dualSet$, there exists constants $\tau>0$ and $\gamma>0$ such that, for all $(x, \lambda) \in \Omega$, $\dist((x, \lambda), \Sol) \le \tau \idfun(x, \lambda)^{\gamma}$.
\end{proposition}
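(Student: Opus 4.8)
Looking at this statement, it is an error-bound result (Proposition \ref{prop:errorbound}) that the authors attribute to Łojasiewicz-type analysis of analytic optimization problems, citing \cite[Th. 5.1]{luoErrorBoundsAnalytic1994}. My plan would be to reduce it to the Łojasiewicz inequality for analytic functions, since the KKT residual function $\idfun$ is built from polynomials and the positive-part/absolute-value operations, which are semi-analytic.

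\textbf{Plan.} The proof proceeds as follows. First I would observe that the solution set $\Sol = \{\opt\}\times\SolDual$ is exactly the zero set of the residual function $\idfun$ intersected with $\bbR^{n}\times\dualSet$: indeed $\idfun(x,\lambda)=0$ forces $\nabla_{x}\Lag(x,\lambda)=0$, $g_{\E}(x)=0$, $g_{\I}(x)\ge 0$, $\lambda_{\I}\ge 0$, and $\lambda_{\I}^{\top}g_{\I}(x)=0$, which is precisely the KKT system \eqref{eq:KKT}; under MFCQ the primal component of any KKT point is the global minimizer (using uniqueness, or at least that $\opt$ is the only primal KKT point in a neighborhood), so the zero set is $\Sol$. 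Second, I would note that $\idfun$ is a finite sum of absolute values of polynomials and of positive parts of polynomials, hence $\idfun$ is a nonnegative \emph{semialgebraic} (in particular, subanalytic) function on $\bbR^{n}\times\dualSet$, continuous, and vanishing exactly on $\Sol$. Third, I would invoke the Łojasiewicz inequality for subanalytic functions on compact sets: for a nonnegative continuous subanalytic function $\phi$ on a compact set $\Omega$, there are constants $\tau,\gamma>0$ with $\dist(u,\phi^{-1}(0))\le \tau\,\phi(u)^{\gamma}$ for all $u\in\Omega$. Applying this with $\phi=\idfun$ and $u=(x,\lambda)$ gives the claim. Alternatively, one cites \cite[Th. 5.1]{luoErrorBoundsAnalytic1994} directly, which packages exactly this argument for analytic programs; I would present both the citation and the one-line subanalyticity reduction so the reader sees why it works.

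\textbf{Key steps in order.} (1) Identify $\Sol$ with the zero set of $\idfun$ restricted to $\bbR^{n}\times\dualSet$, using the equivalence between $\idfun(x,\lambda)=0$ and \eqref{eq:KKT}. (2) Check that $\idfun$ is continuous and semialgebraic/subanalytic: each term $\|\nabla_{x}\Lag\|$, $\|g_{\E}\|$, $\|[-g_{\I}]_{+}\|$, $|\lambda_{\I}^{\top}g_{\I}|$, $\|[-\lambda_{\I}]_{+}\|$ is a composition of polynomials with the semialgebraic maps $t\mapsto|t|$, $t\mapsto t_{+}$, and the Euclidean norm. (3) Apply the Łojasiewicz/subanalytic error-bound theorem on the compact set $\Omega$ to conclude. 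Since $\Omega$ is arbitrary compact, the constants $\tau,\gamma$ depend on $\Omega$, which matches the statement.

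\textbf{Main obstacle.} The genuinely nontrivial input is the Łojasiewicz inequality itself (or the analytic error-bound theorem of \cite{luoErrorBoundsAnalytic1994}), which is a deep result and which I would simply cite rather than reprove. The only thing to be careful about at the paper's level of detail is step (1): one must use the qualification condition (MFCQ, \eqref{eq:MFCQ}) together with the standing assumption that $\opt$ is the unique global minimizer to ensure that the primal part of every KKT point in the relevant region is $\opt$, so that the zero set of $\idfun$ is genuinely $\{\opt\}\times\SolDual$ and not something larger; MFCQ also guarantees $\SolDual$ is nonempty and bounded, so $\Sol$ is a nonempty compact set and $\dist(\cdot,\Sol)$ is well-behaved. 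I do not expect any real difficulty beyond correctly invoking these already-stated hypotheses.
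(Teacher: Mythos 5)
Your approach coincides with the paper's: the proposition is not proved there at all, it is simply recalled from \cite[Th.~5.1]{luoErrorBoundsAnalytic1994}, which is exactly the reference you invoke, and your Łojasiewicz/subanalytic sketch is the standard argument behind that theorem.

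The one substantive step you add beyond the citation, however, has a gap. In step (1) you claim that the zero set of $\idfun$ on $\bbR^{n}\times\dualSet$ is exactly $\Sol=\{\opt\}\times\SolDual$, arguing that MFCQ together with uniqueness of the global minimizer forces the primal part of any KKT pair to be $\opt$. That is not true in general: $\idfun(x,\lambda)=0$ characterizes \emph{all} KKT pairs, and a POP with a unique global minimizer can still have further KKT points (local minima, saddle points) --- the paper's own \cref{ex:toypb} exhibits a local minimum $\bar{x}\neq\opt$. At such a point $\idfun$ vanishes while $\dist(\cdot,\Sol)>0$, so the subanalytic error bound, which controls the distance to $\phi^{-1}(0)$, only bounds the distance to the (possibly strictly larger) KKT set, not to $\Sol$. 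To recover the inequality as stated one must either restrict $\Omega$ to a compact neighborhood of $\Sol$ containing no other KKT points (which is how the proposition is actually used in the proof of \cref{th:identif}), or read $\Sol$ as the full KKT solution set. Since the paper offers no proof beyond the citation, this does not put you at odds with its argument, but your step (1) as written does not follow from the stated hypotheses and should be weakened accordingly.
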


From a primal point $x$, we propose to build multipliers and active set as follows:
\begin{subequations}
\label{eq:identif}
\begin{align}
    \omega(x) &\eqdef \min_{\lambda \in \dualSet} \idfun(x, \lambda), \label{eq:identifpb}\\
    \Actset(x) &\eqdef \{i : g_{i}(x) \le -1 / \log(\omega(x))\}. \label{eq:activeset}
\end{align}
\end{subequations}
Note that \eqref{eq:identifpb} is a conic program with linear and second-order cone constraints, readily solved with interior point methods.

The following result shows that the above procedure correctly identifies a minimizer's active constraints from any neighboring point.
It requires only the lightest assumptions on the minimizer, which holds in particular when several dual solutions exist ($\SolDual$ not a singleton), or when strict complementarity fails ($g_{i}(\opt) = \lambda^{\star}_{i} = 0$ for some $i\in\I$).
\begin{theorem}[Active set identification]%
  \label{th:identif}
  Suppose that point $\opt$ satisfies the KKT \eqref{eq:KKT}, and the MFCQ \eqref{eq:MFCQ} conditions. %
  Then, we have the following:
  \begin{enumerate}
    \item the minimizers of \eqref{eq:identifpb} converge to $\SolDual$ as $x$ goes to $\opt$,
    \item there exists $\epsilon_{1}>0$ such that $x\in\ball(\opt, \epsilon_{1})$ implies $\|x-\opt\| \le \tau \omega(x)^{\gamma}$,
    \item there exists $\epsilon_{2}>0$ such that $x\in\ball(\opt, \epsilon_{2})$ implies $\Actset(x) = \opt[\Actset]$,
  \end{enumerate}
\end{theorem}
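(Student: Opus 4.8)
The plan is to prove the three items in order; item~(i) carries the technical weight, and items~(ii)--(iii) follow from it together with \cref{prop:errorbound}. The starting point is the observation that $\idfun(\opt,\cdot)$ vanishes exactly on $\SolDual$: feasibility of $\opt$ kills the terms $\|g_{\E}(\opt)\|$ and $\|[-g_{\I}(\opt)]_{+}\|$, and the remaining sum $\|\nabla_{x}\Lag(\opt,\lambda)\|+|\lambda_{\I}^{\top}g_{\I}(\opt)|+\|[-\lambda_{\I}]_{+}\|$ is zero precisely when $(\opt,\lambda)$ satisfies \eqref{eq:KKT}. Since $\opt$ is assumed to satisfy KKT we get $\SolDual\neq\emptyset$ and $\omega(\opt)=0$, and moreover $0\le\omega(x)\le\idfun(x,\opt[\lambda])\to 0$ for any fixed $\opt[\lambda]\in\SolDual$, so $\omega(x)\to 0$ as $x\to\opt$.

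The core of item~(i) is a uniform boundedness statement: there are $\bar\epsilon>0$ and a compact $K\subset\dualSet$ such that for every $x\in\ball(\opt,\bar\epsilon)$ the infimum in \eqref{eq:identifpb} is attained and every minimizer lies in $K$. I would prove this by normalization and contradiction. Suppose $x_{k}\to\opt$ and $\lambda_{k}\in\dualSet$ satisfy $\idfun(x_{k},\lambda_{k})\le 1$ (the regime relevant both to attainment and to near-minimizers) with $\|\lambda_{k}\|\to\infty$; write $\lambda_{k}=t_{k}u_{k}$, $t_{k}\to\infty$, $\|u_{k}\|=1$, and pass to a subsequence with $u_{k}\to u$. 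Dividing $\|[-\lambda_{k,\I}]_{+}\|$, $|\lambda_{k,\I}^{\top}g_{\I}(x_{k})|$ and $\|\nabla_{x}\Lag(x_{k},\lambda_{k})\|$ by $t_{k}$ and letting $k\to\infty$ yields in turn $u_{\I}\ge 0$, then $\sum_{i\in\I}u_{i}g_{i}(\opt)=0$ and hence $u_{i}=0$ whenever $g_{i}(\opt)>0$ (i.e.\ for $i\in\I\setminus\opt[\Actset]$), then $\sum_{i}u_{i}\nabla g_{i}(\opt)=0$. Pairing this last identity with the MFCQ vector $v$ of \eqref{eq:MFCQ} leaves $\sum_{i\in\opt[\Actset]}u_{i}\,\nabla g_{i}(\opt)^{\top}v=0$, a sum of nonpositive terms, so $u_{i}=0$ on $\opt[\Actset]$; linear independence of $\{\nabla g_{i}(\opt)\}_{i\in\E}$ then forces $u_{\E}=0$, so $u=0$, contradicting $\|u\|=1$. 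Granting this, item~(i) follows: minimizers $\lambda(x)$ remain in $K$ as $x\to\opt$, and any limit point $\bar\lambda$ satisfies $\idfun(\opt,\bar\lambda)=\lim\omega(x)=0$, hence $\bar\lambda\in\SolDual$.

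For item~(ii), set $\Omega\eqdef\overline{\ball(\opt,\bar\epsilon)}\times K$, a compact subset of $\bbR^{n}\times\dualSet$, and let $\tau,\gamma>0$ be the constants of \cref{prop:errorbound} for this $\Omega$. For $x\in\ball(\opt,\epsilon_{1})$ with $\epsilon_{1}\le\bar\epsilon$ and any minimizer $\lambda(x)$ of \eqref{eq:identifpb}, we have $(x,\lambda(x))\in\Omega$ and $\idfun(x,\lambda(x))=\omega(x)$, hence
\[
  \|x-\opt\|\;\le\;\dist\big((x,\lambda(x)),\,\Sol\big)\;\le\;\tau\,\omega(x)^{\gamma},
\]
the first inequality using $\Sol=\{\opt\}\times\SolDual$. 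For item~(iii), fix a neighbourhood of $\opt$ on which every $g_{i}$ is $L$-Lipschitz. If $i\in\I\setminus\opt[\Actset]$ then $g_{i}(\opt)>0$, so $g_{i}(x)\ge g_{i}(\opt)/2$ near $\opt$, whereas the threshold $-1/\log\omega(x)\to 0^{+}$ as $x\to\opt$; hence $i\notin\Actset(x)$ and $\Actset(x)\subseteq\opt[\Actset]$. If $i\in\opt[\Actset]$ then $g_{i}(\opt)=0$, so by item~(ii) $g_{i}(x)\le L\|x-\opt\|\le L\tau\,\omega(x)^{\gamma}$; since $\omega^{\gamma}\log(1/\omega)\to 0$ as $\omega\to 0^{+}$, we get $L\tau\,\omega(x)^{\gamma}\le -1/\log\omega(x)$ for $x$ close enough to $\opt$, so $g_{i}(x)\le -1/\log\omega(x)$ and $i\in\Actset(x)$, giving $\opt[\Actset]\subseteq\Actset(x)$. (The degenerate case $\omega(x)=0$ is harmless: item~(ii) then forces $x=\opt$, where the identification is immediate.) Shrinking $\epsilon_{2}$ below all the thresholds used gives $\Actset(x)=\opt[\Actset]$ on $\ball(\opt,\epsilon_{2})$.

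The one genuinely delicate step is the uniform boundedness of the minimizers of \eqref{eq:identifpb} near $\opt$ in item~(i); this is exactly where MFCQ enters, as a perturbation-stable version of Gauvin's boundedness theorem for Lagrange multipliers. The remainder is compactness, continuity of $\idfun$, the analytic error bound of \cref{prop:errorbound}, and the elementary asymptotics $\omega^{\gamma}\log(1/\omega)\to 0$ — which is precisely what lets the logarithmic threshold in \eqref{eq:activeset} simultaneously shrink to zero and dominate $\|x-\opt\|$, so that it separates active from inactive constraints.
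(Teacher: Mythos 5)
Your proof is correct and follows essentially the same route as the paper's: bounding $\omega(x)\le \idfun(x,\opt[\lambda])\to 0$, a normalization-plus-MFCQ contradiction to confine the minimizers of \eqref{eq:identifpb} to a compact set (hence forcing limit points into $\SolDual$), the analytic error bound of \cref{prop:errorbound} on a compact $\Omega$ for item~(ii), and the Lipschitz estimate against the logarithmic threshold for item~(iii). Your write-up is in fact slightly cleaner on two minor points --- you justify attainment of the infimum in \eqref{eq:identifpb} and you state the inequalities in item~(iii) with the correct orientation (the paper's displayed chain $g_i(x)\le g_i(\opt)/2\le -1/\log\omega(x)$ for inactive $i$ has its signs reversed) --- but these are presentational, not substantive, differences.
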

\begin{proof}
  \noindent\emph{Item i)}
  For $\opt[\lambda]\in\SolDual$ and $x$ near $\opt$, there holds
  \begin{align}
    \omega(x) &\le \idfun(x, \opt[\lambda]) \\
         &= \left \|\nabla_{x} \Lag(x,\opt[\lambda]) \right \| + \|g_{\E}(x)\| + \| \left[-g_{\I}(x)\right]_{+}\| + |\lambda_{\I}^{\star T} g_{\I}(x)|  +  \|[-\opt[\lambda]_\I]_{+}\| \\
         &= \begin{multlined}[t]
           \left \|\nabla_{x} \Lag(x,\opt[\lambda]) - \nabla_{x} \Lag(\opt,\opt[\lambda]) \right \| + \|g_{\E}(x) - g_{\E}(\opt)\| \\+ \| \left[-g_{\I}(x)\right]_{+} - \left[-g_{\I}(\opt)\right]_{+}\| + |\lambda_{\I}^{\star T} \left(g_{\I}(x) - g_{\I}(\opt)\right)|
         \end{multlined}\\
         &\le C \|x - \opt \|, \label{eq:idfunbound}
  \end{align}
  for some constant $C>0$, where we used that $\opt$ is a KKT point, the functions are Lipschitz continuous near $\opt$, and the set of optimal multipliers $\SolDual$ is bounded, as a consequence of MFCQ.

  We turn to show that, for $\eta>0$ small enough, there exists $\epsilon>0$ such that if $x\in\ball(\opt, \epsilon)$, then the minimizer of \eqref{eq:identifpb} belongs to $\SolDual + \eta$.
  We proceed by contradiction and assume that there exists two sequences $\curr$, $\curr[\lambda]$ such that $\curr\to\opt$, $\curr[\lambda]$ minimizes \eqref{eq:identifpb}, and $\dist(\curr[\lambda], \SolDual) > \eta$.
  By the above inequality, $\idfun(\curr, \curr[\lambda]) \le C\|\curr-\opt\|$, so that $\idfun(\curr, \curr[\lambda]) \to 0$.

  We first consider the case $\curr[\lambda]$ unbounded.
  Possibly taking a subsequence, we can assume that $\|\curr[\lambda]\| \to \infty$, $\curr[\lambda] / \|\curr[\lambda]\| \to \adh[\lambda]$, and $\|\adh[\lambda]\| = 1$ for some $\adh[\lambda] \in \dualSet$.
  Since $\idfun(\curr, \curr[\lambda]) \to 0$, we first have that $\lambda_{\I, k}^{\top}g_{\I}(\curr) \to 0$.
  Dividing by $\|\curr[\lambda]\|$ and taking limit yields $\sum_{i\in \I} \adh[\lambda]_{i} g_{i}(\opt) = 0$.
  Using that $\adh[\lambda] \in \bbR^{\mI}_{+}$, $g_{\opt[\Actset]}(\opt) = 0$, and $g_{i \in \I \setminus \opt[\Actset]}(\opt) > 0$, the above limit reduces towe get $\adh[\lambda]_{i} = 0$ for $i \in \I \setminus \opt[\Actset]$.
  The limit $\idfun(\curr, \curr[\lambda]) \to 0$ also implies $\nabla_{x} \Lag(\curr, \curr[\lambda]) \to 0$.
  Dividing by $\|\adh[\lambda]\|$, taking limit, and using $\adh[\lambda]_{\I\setminus\opt[\Actset]} = 0$ yields
  \begin{equation}
    \sum_{i\in\E\cup\opt[\Actset]}\adh[\lambda]_{i} \nabla g_{i}(\opt) = 0.
  \end{equation}
  We now use MFCQ \eqref{eq:MFCQ}: taking scalar product with vector $v$ and using conditions readily yields $\adh[\lambda]_{\opt[\Actset]}=0$ and then $\adh[\lambda]_{\E} = 0$, which contradicts the assumption.

  We turn to the case $\curr[\lambda]$ bounded.
  Any limit point $\adh[\lambda]$ of $\curr[\lambda]$ is such that $\idfun(\opt, \adh[\lambda]) = 0$, so that $\adh[\lambda] \in \SolDual$, which contradicts the assumption.

  \medskip
  \noindent\emph{Item ii)}
  For $\|x - \opt\| \le \epsilon$, we have by \emph{item i)} that $\dist(\lambda, \SolDual) \le \eta$.
  \Cref{prop:errorbound} provides, for any $\lambda$ in $\SolDual + \eta$:
  \begin{equation}
    \|x - \opt\| \le \dist((x, \lambda), \Sol) \le \tau \idfun(x, \lambda)^{\gamma},
  \end{equation}
  Taking first the infimum over $\lambda$, and then using \emph{item i)} yields:
  \begin{equation}
    \|x - \opt\| \le \tau \left(\min_{\lambda \in \SolDual}\idfun(x, \lambda)\right)^{\gamma} = \tau \left(\min_{\lambda \in \dualSet}\idfun(x, \lambda)\right)^{\gamma} = \tau \omega(x)^{\gamma}.
  \end{equation}

  \medskip
  \noindent\emph{Item iii)} Take $\epsilon_{2} > 0$ such that $\epsilon_{2} \le \epsilon_{1}$, $-1/\log(\omega(x)) \le \min_{i\notin\opt[\Actset]} g_{i}(\opt)/2$, and $L\tau\omega(x)^{\gamma} \le -1/\log(\omega(x))$.
  For $i \notin \opt[\Actset]$ and $x \in \ball(\opt, \epsilon_{2})$, we have
  \begin{equation}
    g_{i}(x) \le g_{i}(\opt) / 2 \le -1 / \log(\omega(x)),
  \end{equation}
  so that $i\notin \Actset(x)$.
  For $i \in \opt[\Actset]$ and $x \in \ball(\opt, \epsilon_{2})$,
  \begin{equation}
    -g_{i}(x) \le L \|x - \opt\| \le L\tau \omega(x)^{\gamma} \le -1 / \log(\omega(x)),
  \end{equation}
  where we used that $g_{i}$ is Lipschitz near $\opt$, and \emph{item ii)}.
  Therefore, $i \in \Actset(x)$.
\end{proof}

\begin{remark}[Relation with the literature]
  Identifying constraints in constrained optimization is a delicate topic and has attracted considerable attention.

  \Cref{th:identif} follows closely the approach of Oberlin and Wright \cite[Sec 3.2, Th. 3.4]{oberlinActiveSetIdentification2006}.
  Considering the wider setting of $\mathcal{C}^{2}$ rather than polynomial objective and constraint functions, they rely on a different error bound than that of \cref{prop:errorbound}.
  The ensuing identification procedure requires solving a linear program with equilibrium constraints, which is done with a branch and bound strategy on a combinatorial program.
  In contrast, the error bound used here yields a conic optimization problem, easily solved by Interior points methods.

  Lewis and Wright \cite{lewisIdentifyingActivity2011} look at $\min g \circ c$ where $c$ is a smooth mapping, and $g$ is nonsmooth.
  This captures problem \eqref{eq:POP}, but also the larger class of nonsmooth functions.
  They characterize the identification properties of abstract algorithmic frameworks, assuming that some sequence of points converge to a minimizer.
\end{remark}

\begin{example}[Activity identification]%
  \label{ex:toypbdidentif}
  We return to the problem developed in \cref{ex:toypb,ex:toypb-moment}.
  \Cref{fig:toypb_identif_socp} illustrates the behavior of the identification procedure \eqref{eq:identif} near the minimizer $\opt$.
  In line with \cref{th:identif}, the active set $\Actset = \{1\}$ is consistently detected near the minimizer.

  Therefore, the polynomial problem \eqref{eq:POP} reduces to the equality constrained minimization
  \begin{equation}%
    \label{eq:toypbreducedpop}
    \min_{x} f(x) \quad \text{s.t.} \quad g_1(x) = 0,
  \end{equation}
  whose resolution is the topic of the following section.
\end{example}

\begin{figure}[tbp]
  \centering
  \subfloat[]{\label{fig:toypb_identif_socp_a}\includegraphics[width=0.48\textwidth, valign=c]{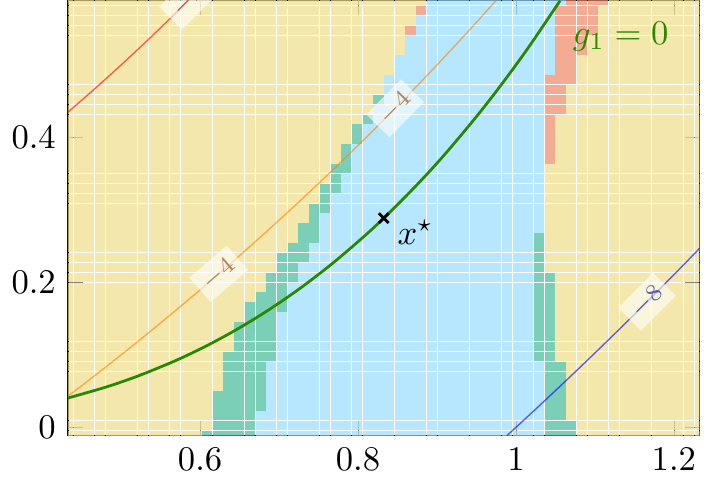}}
  \qquad
  \subfloat[]{\label{fig:toypb_identif_socp_b}\includegraphics[width=0.18\textwidth, valign=c]{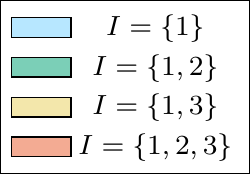}}
  \caption{Behavior of the identification procedure \cref{eq:identif} on the illustrative problem of \cref{ex:toypb}: the correct active set $I = \{1\}$ is consistently detected on a neighborhood of $\opt$.}
  \label{fig:toypb_identif_socp}
\end{figure}

\section{Algebraic Geometry and Point Estimation Theory}
\label{sec:algebr-geom-point}

In this section, we review the so-called point-estimation theory of Smale~\cite{ShubSmale1993,Cucker1999}, also known as the \ab{} theory.
This theory allows to capture the behavior of Newton's method quantitatively, and provides a computable criterion that guarantees existence of a nearby zero and quadratic convergence.
We refer to~\cite{alvarezUnifyingLocalConvergence2008} for an extension of this result beyond the algebraic setting.

Throughout this section, we consider a system of polynomial equations $\Psys:\bbR^{\Psysn}\to\bbR^{\Psysm}$, and Newton's method defined as iterating $N_{\Psys}(\vPsys) \eqdef [\D_{\Psys}(\vPsys)]^{-1}\Psys(\vPsys)$ for $\vPsys \in \bbR^{\Psysn}$.

\begin{definition}[Approximate Zero]
  Let $\vPsys \in \R^{\Psysn}$ and consider the sequence $\vPsys_0 = \vPsys$, $\vPsys_{i + 1} = N_{\Psys}(\vPsys_i)$ for $i \ge 0$.
  The point $\vPsys$ is an \emph{approximate zero} of $\Psys$ if this sequence is well defined, and there exists a point $\vPsys' \in \R^\Psysn$ such that $\Psys(\vPsys') = 0$ and
  \begin{align}%
    \label{eq:newtonquadcv}
    \|\vPsys_i - \vPsys' \| \le (1/2)^{2^i - 1} \|\vPsys_0 - \vPsys'\|.
  \end{align}
  Then, we call $\vPsys'$ the \emph{associated zero} of $\vPsys$ and say that $\vPsys$ represents $\vPsys'$.
\end{definition}
Besides, we will need the following quantities.
\begin{subequations}
  \begin{align}
    \alpha (\Psys, \vPsys) &\eqdef  \beta (\Psys, \vPsys) \gamma(\Psys, \vPsys) \\
    \beta (\Psys, \vPsys) &\eqdef \left\| \D_\Psys(\vPsys)^{-1}  \Psys(\vPsys)  \right\| \\
    \gamma(\Psys, \vPsys) &\eqdef \sup\limits_{k > 1} \left\| {\frac{\D_\Psys(\vPsys)^{-1}    \D_\Psys^{(k)} (\vPsys)}{k!}} \right\|^{\frac{1}{k-1}}.
  \end{align}
\end{subequations}

The following result shows that if $\alpha(\Psys, \vPsys)$ is small enough, there exists a nearby zero to which the iterates of Newton's method converge quadratically.

\begin{proposition}[Detecting approximate zeros]%
  \label{prop:alphabeta}
  Consider any polynomial map $\Psys:\bbR^{\Psysn} \to \bbR^{\Psysm}$ and point  $\vPsys\in\bbR^{\Psysn}$. If $\alpha(\Psys, \vPsys) \le \alpha_0$, then
  \begin{enumerate}
    \item $\vPsys$ is an approximate zero of $\Psys$,
    \item $\|\vPsys - \vPsys'\| \le 2 \beta(\Psys, \vPsys)$,
  \end{enumerate}
  where $\vPsys'$ is the associated zero of $\vPsys$.
  The universal constant $\alpha_{0}$ is smaller than  $\frac{1}{4}(13 - 3 \sqrt{17}) \approx 0.158$.
\end{proposition}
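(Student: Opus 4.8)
The plan is to reduce Proposition~\ref{prop:alphabeta} to the classical $\alpha$-theory of Smale. The statement is exactly the "$\gamma$-theorem" / "$\alpha$-theorem" of Shub and Smale, so the bulk of the work is to package the known one-dimensional convergence estimates and lift them to a statement about the Newton sequence in $\bbR^\Psysn$. I would not re-derive the universal constant from scratch; rather I would invoke the theory as developed in \cite{ShubSmale1993,Cucker1999} (and the textbook account in the Blum--Cucker--Shub--Smale framework), and spend the proof making precise why the hypotheses transfer.

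\medskip
\noindent\textbf{Step 1: the $\gamma$-controlled majorant.}
Fix $\vPsys$ with $\beta \eqdef \beta(\Psys,\vPsys)$ and $\gamma \eqdef \gamma(\Psys,\vPsys)$ finite (finiteness of $\gamma$ is automatic since $\Psys$ is polynomial, so $\D_\Psys^{(k)} = 0$ for $k$ large). The core analytic fact is that, writing $u \eqdef \|\vPsys_i - \vPsys\|\gamma$, the Newton iteration for $\Psys$ is majorized by the Newton iteration for the universal scalar polynomial
\[
  h(t) \;=\; \beta \;-\; t \;+\; \frac{\gamma t^{2}}{1-\gamma t},
\]
valid on the ball where $\gamma\|\vPsys_i - \vPsys\| < 1 - 1/\sqrt2$. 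This majorization is proved by estimating $\|\D_\Psys(\vPsys_i)^{-1}\D_\Psys(\vPsys)\|$ and the Taylor remainder of $\Psys$ via the definition of $\gamma$; both are standard Smale-theory lemmas and I would cite them. The quantity $\alpha = \beta\gamma$ is precisely the discriminant-type quantity governing whether $h$ has a root in the relevant interval: $\alpha \le \alpha_0 = \tfrac14(13 - 3\sqrt{17})$ is equivalent to $h$ having two positive real roots with the smaller one $t_\ast$ inside the region of contraction.

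\medskip
\noindent\textbf{Step 2: existence of the associated zero.}
Under $\alpha \le \alpha_0$, the majorant $h$ has its smallest positive root $t_\ast \le 2\beta$ (the bound $t_\ast \le 2\beta$ is the scalar computation $t_\ast = \frac{1+\alpha - \sqrt{(1+\alpha)^2 - 8\alpha}}{4\gamma} \le 2\beta$, which one checks by elementary algebra for $\alpha \le \alpha_0$). The majorization of Step~1 then shows the Newton sequence $(\vPsys_i)$ is Cauchy and stays inside $\ball(\vPsys, t_\ast)$, hence converges to some $\vPsys'$ with $\|\vPsys - \vPsys'\| \le t_\ast \le 2\beta$, which is item~(ii). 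Continuity of $\D_\Psys$ and of $N_\Psys$ on this ball (invertibility of $\D_\Psys$ is maintained throughout, again by the $\gamma$-estimate) gives $\Psys(\vPsys') = 0$.

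\medskip
\noindent\textbf{Step 3: the quadratic rate.}
Finally, the same scalar majorant gives the contraction factor: the errors $\|\vPsys_i - \vPsys'\|$ are dominated by $t_\ast - t_i$ where $t_i$ is the scalar Newton sequence for $h$ started at $0$, and for the scalar sequence one has the explicit estimate $t_\ast - t_{i} \le (1/2)^{2^i - 1}(t_\ast - t_0)$ whenever $\alpha \le \alpha_0$ — this is where the precise numerical value of $\alpha_0$ is used and is the reason $\alpha_0$ is not simply, say, $1/4$. Combined with $t_\ast - t_0 = t_\ast \le 2\beta$ and $\|\vPsys_0 - \vPsys'\| \le t_\ast$, this yields \eqref{eq:newtonquadcv}, so $\vPsys$ is an approximate zero.

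\medskip
\noindent\textbf{Main obstacle.}
The delicate point is Step~1, the majorization argument: carefully controlling $\|\D_\Psys(\vPsys_i)^{-1}\Psys(\vPsys_i)\|$ in terms of the scalar iterates requires the $\gamma$-estimates for \emph{both} the derivative perturbation and the function value, and tracking that $\gamma\|\vPsys_i - \vPsys\|$ never leaves the valid region. Since this is entirely classical, my proof would state the relevant Smale-theory lemmas with references rather than reprove them, and emphasize only that, the map being polynomial, all hypotheses (finiteness of $\gamma$, analyticity) are met unconditionally — so the proposition applies verbatim to every polynomial system arising from the reduced POP.
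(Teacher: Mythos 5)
Your reconstruction is correct and matches the paper's treatment: the paper states this proposition without proof, as a direct citation of Smale's $\alpha$-theorem from \cite{ShubSmale1993,Cucker1999}, and your majorant-function argument (the scalar polynomial $h(t)=\beta - t + \gamma t^{2}/(1-\gamma t)$, its smallest positive root $t_\ast = \frac{2\beta}{(1+\alpha)+\sqrt{(1+\alpha)^2-8\alpha}} \le 2\beta$, and the doubling estimate for the scalar Newton sequence that pins down $\alpha_0 = \frac{1}{4}(13-3\sqrt{17})$) is exactly the classical proof found in those references. The only point worth flagging is that this argument requires $\Psysn = \Psysm$ so that $\D_\Psys(\vPsys)$ is genuinely invertible --- which is the only case the paper actually uses, namely the square KKT system \eqref{eq:redpopKKT} --- whereas the proposition is loosely stated for rectangular maps, where one would have to invoke the Moore--Penrose variant of the theory instead.
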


The quantity $\gamma(\Psys, \vPsys)$ involves the operator norm of a sequence of increasingly large tensors.
Its direct computation is therefore hardly practical.
However,~\cite{ShubSmale1993} proposed a tractable upper bound for this quantity, when $\Psysn = \Psysm$.

\begin{proposition}[Bound on high derivatives]%
  \label{prop:tractablebound}
  Consider a polynomial mapping $\Psys: \R^{\Psysn}\to\R^{\Psysn}$ and a point $\vPsys\in\bbR^{\Psysn}$.
  If $\D_{\Psys}(\vPsys)$ is invertible, there holds
 \begin{equation}
    \gamma (\Psys, \vPsys) \leq \mu(\Psys, \vPsys) \frac{(\max_{i=1, \ldots, \Psysn} d_{i})^{3/2}}{2\|\vPsys\|_1},
  \end{equation}
  where $d_{i} = \degree \Psys_{i}$, $\mu(\Psys, \vPsys) \eqdef \max \{1, \|\Psys\|_{p} \|D_\Psys(\vPsys)^{-1} \Delta_{(d)}(\vPsys)\|\}$, and $\Delta_{(d)}(\vPsys)$ is the $\Psysn\times \Psysn$ diagonal matrix whose $i$-th element is $\Delta_{(d)}(\vPsys)_{i,i} = d_i^{1/2}\|\vPsys\|_1^{d_i-1}$.
\end{proposition}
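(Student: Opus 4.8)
The plan is to bound $\gamma(\Psys,\vPsys)$ by estimating, for each $k>1$, the operator norm of $\D_\Psys(\vPsys)^{-1}\D_\Psys^{(k)}(\vPsys)/k!$ and then taking the $(k-1)$-th root and the supremum. First I would factor the $k$-th derivative tensor as $\D_\Psys^{(k)}(\vPsys) = \Delta_{(d)}(\vPsys)\,[\Delta_{(d)}(\vPsys)^{-1}\D_\Psys^{(k)}(\vPsys)]$ and insert this into the expression, so that $\|\D_\Psys(\vPsys)^{-1}\D_\Psys^{(k)}(\vPsys)/k!\| \le \|\D_\Psys(\vPsys)^{-1}\Delta_{(d)}(\vPsys)\| \cdot \|\Delta_{(d)}(\vPsys)^{-1}\D_\Psys^{(k)}(\vPsys)/k!\|$. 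The first factor is already part of $\mu(\Psys,\vPsys)$ up to the $\max\{1,\cdot\}$; the work is to control the second factor componentwise.

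Next I would carry out the per-component estimate: the $i$-th row of $\D_\Psys^{(k)}(\vPsys)$ consists of the $k$-th partial derivatives of the single polynomial $\Psys_i$, scaled by $1/k!$ these are (up to multinomial factors) the higher coefficients of the Taylor expansion of $\Psys_i$ at $\vPsys$. The key classical fact (from the proof of the $\gamma$-theorem in \cite{ShubSmale1993}, essentially a Cauchy-type estimate in the Bombieri/Weyl norm $\|\cdot\|_p$) is that for a degree-$d_i$ polynomial, the norm of the $k$-th Taylor tensor at $\vPsys$ is bounded by $\|\Psys_i\|_p\, d_i^{k/2}\,\|\vPsys\|_1^{d_i-k}$, where the combinatorial weights in $\|\cdot\|_p$ are precisely tuned so this holds with constant $1$. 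Dividing by the $i$-th diagonal entry $d_i^{1/2}\|\vPsys\|_1^{d_i-1}$ of $\Delta_{(d)}(\vPsys)$ gives a bound of $\|\Psys_i\|_p\, d_i^{(k-1)/2}\,\|\vPsys\|_1^{-(k-1)}$ on row $i$; summing over $i$ in the appropriate norm yields $\|\Delta_{(d)}(\vPsys)^{-1}\D_\Psys^{(k)}(\vPsys)/k!\| \le \|\Psys\|_p\,(\max_i d_i)^{(k-1)/2}\,\|\vPsys\|_1^{-(k-1)}$.

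Finally I would assemble the pieces: combining the two factors gives $\|\D_\Psys(\vPsys)^{-1}\D_\Psys^{(k)}(\vPsys)/k!\| \le \mu(\Psys,\vPsys)\,(\max_i d_i)^{(k-1)/2}\,\|\vPsys\|_1^{-(k-1)}$, using that $\mu \ge \|\Psys\|_p\|\D_\Psys(\vPsys)^{-1}\Delta_{(d)}(\vPsys)\|$ and that raising to the $1/(k-1)$ power is harmless since $\mu\ge 1$. Taking the $(k-1)$-th root produces $\mu(\Psys,\vPsys)^{1/(k-1)}(\max_i d_i)^{1/2}\|\vPsys\|_1^{-1} \le \mu(\Psys,\vPsys)(\max_i d_i)^{1/2}\|\vPsys\|_1^{-1}$, independent of $k$, so the supremum over $k>1$ obeys the same bound; a factor $(\max_i d_i)/2$ versus $(\max_i d_i)^{1/2}$ discrepancy is reconciled by a slightly sharper Cauchy estimate that gains the extra $1/2$ and one more power of $d_i^{1/2}$ (the standard refinement in \cite{ShubSmale1993}, which uses $k\le d_i$ to replace one $d_i^{1/2}$ factor and the bound $\binom{d_i}{k}\le d_i^k/k!$ more carefully). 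The main obstacle is getting this last constant exactly right: it hinges on the precise definition of the polynomial norm $\|\cdot\|_p$ and on the sharp Cauchy/Bombieri inequality for Taylor coefficients, so I would either cite \cite{ShubSmale1993} for that lemma verbatim or reproduce its short proof, rather than re-deriving the multinomial bookkeeping from scratch.
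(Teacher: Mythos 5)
Your overall strategy is the right one and matches the source that the paper itself relies on for this result (the proposition is stated in the paper without proof, as a citation to \cite{ShubSmale1993}; the standard write-up is the ``higher derivative estimate'' there and in Blum--Cucker--Shub--Smale): factor $\D_\Psys(\vPsys)^{-1}\D_\Psys^{(k)}(\vPsys)/k!$ through $\Delta_{(d)}(\vPsys)$, bound the second factor row by row via a Cauchy-type estimate in the Bombieri--Weyl norm, take $(k-1)$-th roots, and absorb $\mu(\Psys,\vPsys)^{1/(k-1)}\le\mu(\Psys,\vPsys)$ using $\mu\ge 1$. However, the key per-component estimate you invoke is wrong, and the error propagates to the final constant. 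The correct estimate for the degree-$d_i$ component $\Psys_i$ is
\begin{equation*}
  \frac{\bigl\|\D^{(k)}\Psys_i(\vPsys)\bigr\|}{k!} \;\le\; \binom{d_i}{k}\,\|\Psys_i\|_{p}\,\|\vPsys\|_1^{\,d_i-k},
\end{equation*}
with the binomial coefficient $\binom{d_i}{k}$, not $d_i^{k/2}$ as you assert: already for $k=2$ the two differ, since $\binom{d}{2}\sim d^2/2$ while $d^{2/2}=d$, so the Bombieri weights do \emph{not} reduce the constant to $1$ in the form you claim. After dividing by the diagonal entry $d_i^{1/2}\|\vPsys\|_1^{\,d_i-1}$ and taking the $(k-1)$-th root, the exponent $3/2$ and the factor $1/2$ in the proposition come precisely from the combinatorial inequality
\begin{equation*}
  \left(\frac{1}{\sqrt{d}}\binom{d}{k}\right)^{\frac{1}{k-1}} \;\le\; \frac{d^{3/2}}{2},
  \qquad 2\le k\le d,
\end{equation*}
which is essentially tight at $k=2$. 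That lemma is the heart of the proof and is missing from your write-up; moreover, your closing ``reconciliation'' paragraph points the wrong way, since your (incorrect) $d_i^{k/2}$ estimate would produce the \emph{stronger} bound $(\max_i d_i)^{1/2}/\|\vPsys\|_1$, so no ``sharper'' Cauchy estimate is what is needed --- rather, the correct, weaker Taylor bound together with the displayed combinatorial inequality is what yields $(\max_i d_i)^{3/2}/(2\|\vPsys\|_1)$. A complete argument should also record that the affine statement follows by applying the homogeneous estimate to the homogenization of each $\Psys_i$ at the point $(1,\vPsys)$, which is exactly where the norm $\|\vPsys\|_1^2 = 1+\sum_i \vPsys_i^2$ enters.
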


In practice, we thus use the following direct consequence of \cref{prop:alphabeta,prop:tractablebound}.
\begin{corollary}[Tractable detection of approx. zeros]\label{coro:ab}
  Consider a polynomial map $\Psys$ and a point $\vPsys$.
  If
  \begin{equation}
    \hat{\alpha}(\Psys, \vPsys) \eqdef \beta(\Psys, \vPsys) \mu(\Psys, \vPsys) \frac{(\max_{i=1, \ldots, \Psysn} d_{i})^{3/2}}{2\|\vPsys\|_1} \le \frac{1}{4}(13-3\sqrt{17}),
  \end{equation}
  then \emph{(i)} $\vPsys$ is an approximate zero of $\Psys$, and \emph{(ii)}$\|\vPsys - \vPsys'\| \le 2 \beta(\Psys, \vPsys)$,
  where $\vPsys'$ is the associated zero of $\vPsys$.
\end{corollary}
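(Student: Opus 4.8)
The plan is to derive \Cref{coro:ab} as an immediate composition of \Cref{prop:alphabeta} and \Cref{prop:tractablebound}, so the ``proof'' is really a short chain of inequalities rather than anything substantive. First I would observe that the hypothesis is stated for a map $\Psys:\bbR^{\Psysn}\to\bbR^{\Psysn}$ with the same number of equations and unknowns, which is exactly the setting required by \Cref{prop:tractablebound}. Under the stated hypothesis $\hat\alpha(\Psys,\vPsys)\le \tfrac14(13-3\sqrt{17})$, the quantity $\beta(\Psys,\vPsys)=\|\D_\Psys(\vPsys)^{-1}\Psys(\vPsys)\|$ is finite, which forces $\D_\Psys(\vPsys)$ to be invertible; this is the hypothesis needed to invoke \Cref{prop:tractablebound}.

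The key steps, in order, are as follows. Step one: since $\D_\Psys(\vPsys)$ is invertible, \Cref{prop:tractablebound} gives
\begin{equation}
  \gamma(\Psys,\vPsys)\;\le\;\mu(\Psys,\vPsys)\,\frac{(\max_{i} d_i)^{3/2}}{2\|\vPsys\|_1}.
\end{equation}
Step two: multiply both sides by $\beta(\Psys,\vPsys)\ge 0$ and use the definition $\alpha(\Psys,\vPsys)=\beta(\Psys,\vPsys)\gamma(\Psys,\vPsys)$ to obtain
\begin{equation}
  \alpha(\Psys,\vPsys)\;\le\;\beta(\Psys,\vPsys)\,\mu(\Psys,\vPsys)\,\frac{(\max_{i} d_i)^{3/2}}{2\|\vPsys\|_1}\;=\;\hat\alpha(\Psys,\vPsys).
\end{equation}
Step three: combine with the hypothesis $\hat\alpha(\Psys,\vPsys)\le\tfrac14(13-3\sqrt{17})$ to conclude $\alpha(\Psys,\vPsys)\le\tfrac14(13-3\sqrt{17})$. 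Step four: since $\alpha_0$ in \Cref{prop:alphabeta} satisfies $\alpha_0\le\tfrac14(13-3\sqrt{17})$ — wait, the statement of \Cref{prop:alphabeta} says $\alpha_0$ is \emph{smaller than} that value, so I should instead invoke the standard sharper form: it is known (Shub--Smale) that $\alpha(\Psys,\vPsys)\le\tfrac14(13-3\sqrt{17})$ already suffices for the conclusions of \Cref{prop:alphabeta}. Assuming that sharper universal constant (which is the one quoted in \Cref{prop:alphabeta}), apply \Cref{prop:alphabeta} to get that $\vPsys$ is an approximate zero with associated zero $\vPsys'$ and $\|\vPsys-\vPsys'\|\le 2\beta(\Psys,\vPsys)$, which are exactly conclusions (i) and (ii).

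The only genuine subtlety — the ``main obstacle,'' though it is minor — is the matching of universal constants: \Cref{prop:alphabeta} is phrased with an unspecified $\alpha_0$ that is merely bounded above by $\tfrac14(13-3\sqrt{17})\approx 0.158$, whereas \Cref{coro:ab} uses $\tfrac14(13-3\sqrt{17})$ itself as the threshold. To make the deduction airtight I would either (a) cite the precise result of Shub--Smale \cite{ShubSmale1993} that the value $\tfrac14(13-3\sqrt{17})$ is itself an admissible choice of $\alpha_0$ for the approximate-zero conclusion, or (b) restate \Cref{coro:ab} with the threshold $\alpha_0$ to be safe. Everything else is a one-line substitution; no estimates on tensors, no convergence analysis, since all the analytic content is already packaged in \Cref{prop:alphabeta,prop:tractablebound}. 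I would close by noting that the bound $\|\vPsys-\vPsys'\|\le 2\beta(\Psys,\vPsys)$ is inherited verbatim, and that in the application $\Psys$ is the KKT map of the reduced equality-constrained problem, so $\Psysn=\Psysm$ holds automatically.
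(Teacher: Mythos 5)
Your proposal is correct and follows exactly the route the paper intends: the corollary is stated as a direct consequence of \Cref{prop:alphabeta,prop:tractablebound}, obtained by multiplying the bound on $\gamma(\Psys,\vPsys)$ by $\beta(\Psys,\vPsys)$ to get $\alpha(\Psys,\vPsys)\le\hat\alpha(\Psys,\vPsys)$ and then invoking \Cref{prop:alphabeta}. Your remark about the universal constant is well taken --- the value $\tfrac14(13-3\sqrt{17})$ is indeed the admissible Shub--Smale threshold, so the deduction goes through as you describe.
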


\begin{example}[Newton on the reduced problem]
  We return to the problem discussed in \cref{ex:toypb,ex:toypb-moment,ex:toypbdidentif}.
  Having obtained a point $\vx$ near $\opt$, and estimated its active set $\Actset(\vx) = \{1\}$, the initial problem \eqref{eq:toypbfuns} reduces to \eqref{eq:toypbreducedpop}.
  This reduced problem features only equality constraints, so that its KKT conditions form the system of smooth equations:
  \begin{align}
      \nabla f(x) - \lambda \nabla g_{1}(x) &=0\\
      g_{1}(x) &=0
  \end{align}
  We consider solving this system of equations with Newton's method, relying on the \ab{} test to ensure fast convergence of Newton's method.
\end{example}

\section{A hybrid method for polynomial optimization}
\label{sec:hybr-meth-polyn}

In this section, we present a method that combines the three ingredients into an algorithm for global optimization of polynomial problems.

\subsection{Description of the algorithm}
\label{sec:descr-algor}

The proposed \cref{alg:hybrid} consists of an outer-inner loop structure, where the outer loop produces an estimate of the primal solution, and the inner loop potentially applies a fast local convergent procedure that either converges to the solution or is forced to step back out in the outer loop upon failure of feasibility or optimality.

\paragraph{Partial solve of convex relaxation}
A convex relaxation of the polynomial problem is partially solved with, \eg{} a first-order method such as coordinate descent on the Burer-Monteiro reformulation of the relaxation as described in \cite{marecekLowrankCoordinatedescentAlgorithm2017}.

\paragraph{Active set identification}
At the obtained point $\curr$, the current active set $\curr[\Actset] \subseteq \I$ is computed with the procedure \cref{eq:activeset}.

\paragraph{Newton's method on the reduced problem}
We then consider the problem reduced to the current active set $\curr[\Actset]$, defined by
\begin{equation}%
  \label{eq:reducedPOP}
  \min_{x} f(x) \quad \text{s.t.} \quad g_i(x) = 0, \quad i \in \E \cup \curr[\Actset]
\end{equation}
We apply Newton's method on the first-order necessary optimality, or KKT, conditions to find a local minimizer.
In line with the \ab{} theory, this writes as finding a zero of the polynomial system $\Psys_{\curr[\Actset]}: \bbR^{n +\mE + |\curr[\Actset]|} \to \bbR^{n +\mE + |\curr[\Actset]|}$, defined by
\begin{equation}%
  \label{eq:redpopKKT}
  \Psys_{\curr[\Actset]}(x, \lambda) \eqdef
  \begin{pmatrix}
    \nabla f(x)-\sum_{i\in \E \cup \curr[\Actset]} \lambda_{i} \nabla g_i(x) \\
    g_{\E \cup \curr[\Actset]}(x)
  \end{pmatrix}.
\end{equation}
We consider running Newton's method from the point $(\curr, \lambda_{\curr[\Actset]}(\curr))$, where the dual point is the least-squares, or Fletcher's, multiplier defined as:
\begin{equation}\label{eq:leastsquaremult}
  \lambda_{\Actset}(x) \eqdef \min_{\lambda\in\bbR^{\mE + |\curr[\Actset]|}} \left\| \nabla f(x) - \sum_{i\in \E \cup \curr[\Actset]} \lambda_{i}\nabla g_{i}(x) \right\|.
\end{equation}
If the \ab{} test is satisfied, we run Newton's method up to high precision: $\|z - \opt{z}\|\le \varepsilon$.
By the estimate \cref{eq:newtonquadcv}, this takes $\lceil\ln_{2}( 1 - \ln_{2}(\varepsilon/\|\vPsys - \opt[\vPsys]\|) )\rceil$ iterations: with the classical floating point accuracy $\varepsilon=2.22\cdot10^{-16}$, and starting with initial distance $\|\vPsys - \opt[\vPsys]\| = 10$, Newton's method converges in at most $\lceil5.82\rceil = 6$ iterations.

\paragraph{Checking global optimality}
No global approach.
For specific problems, one may have a good quality lower bound on the optimal value $f(\opt)$, so that for feasible iterates $\curr$, $f(\curr)-f(\opt)$ bounds the suboptimality $f(\curr)-f(\opt)$.
This is the case in maxcut problems.
In generality, one possibility is to check that the point $\adh$ gives a sequence of moments $(\adh^{\alpha})_{\alpha}$ that is optimal for a tight enough moment relaxation.
This task amounts to checking optimality of a feasible point for an SDP in dual form, without a corresponding candidate point for the primal problem.
Doing so with less effort than solving the primal dual pair is challenging; an approach that leverages the specifics of the current situation is outlined in \cite{xuVerifyingGlobalOptimality2021}.

\begin{algorithm}
  \caption{Hybrid algorithm -- Two phase algorithm}
  \label{alg:hybrid}
  \begin{algorithmic}[1]
    \Require $(f, g)$ defining the POP \eqref{eq:POP}%
    \State Initialise $r = \max(d_{i})$, $y^{0}_{r}$ initial point for \eqref{eq:sdprel}
    \For{$k=1,2,\dots$}
      \State $y^{k}_{r} \gets $ partial solution of the order-$r$ moment relaxation \eqref{eq:sdprel} from point $y^{k-1}_{r}$
      \State $x^{k} \gets $ extracted point from SDP iterate $y^{k}_{r}$ \label{line:extraction}%
      \label{alg:lextract}
      \label{alg:lextr}
      \State $\Actset^{k} \gets \Actset(x^{k})$ %
      \State $\lambda^{k} \gets \lambda_{\Actset^{k}}(x^{k})$ %
      \label{alg:lactset}
      \If{$\hat{\alpha}\left(\Psys_{\Actset^{k}}, (x^{k}, \lambda^{k})\right) \le \frac{1}{4}(13 - 3 \sqrt{17})$}
        \State $\adh[x]^{k}, \adh[\lambda]^{k} \gets$ limit of Newton's method on $\Psys_{\Actset^{k}}$ from point $(x^{k}, \lambda^{k})$
          \label{alg:lnewton}
        \If{$(\adh[x]^{k}, \adh[\lambda]^{k})$ satisfies a condition for global optimality} \label{alg:lstopttest}
          \State \Return $\adh[x]^{k}, \adh[\lambda]^{k}$
          \label{alg:lnewtonreturn}
        \Else %
          \State Further solve current relaxation or increase $r$
        \EndIf
      \EndIf
    \EndFor
  \end{algorithmic}
\end{algorithm}

\subsection{Convergence guarantees for \Cref{alg:hybrid}}
\label{sec:conv-alg}

We introduce two assumptions on the problem at the minimizer $\opt$.
The linear independence constraint qualification (LICQ) is that
\begin{equation}\label{eq:LICQ}
  \{ \nabla g_{i}(\opt), \; i \in \E\cup\opt[\Actset]\} \; \text{is linearly indenpendant}.
\end{equation}
This condition implies the Mangasarian-Fromovitz condition \eqref{eq:MFCQ} and ensures unicity of the set multipliers at $\opt$; see \eg{} \cite[p. 202]{bonnansNumericalOptimizationTheoretical2006}.
The second-order sufficient condition is that
\begin{equation}\label{eq:secondordergrowth}
  v^{\top} \nabla_{xx}^{2} \Lag(\opt, \opt[\lambda]) v >0 \; \text{for all} \; v \in \mathcal{C}\setminus\{0\},
\end{equation}
where $\opt[\lambda]$ is the dual multiplier, and
\begin{equation}
  \mathcal{C} \eqdef \{v \; | \; \nabla g_{i}(\opt)^{\top} v = 0, \;\text{for}\; i\in\E\cup\opt[\Actset]\}.
\end{equation}

We can now state the main result on \cref{alg:hybrid}.
\begin{theorem}[Quadratic convergence to the global minimizer]%
  \label{th:algocv}
  Consider a POP~\eqref{eq:POP} that admits a unique global minimizer $\opt$, and assume that the solver for solving the relaxation \eqref{eq:sdprel} generates iterates converging to the solution.
  Then, \cref{alg:hybrid} generates iterates $(\curr, \curr[\Actset])$ such that:
  \begin{enumerate}
    \item $(\curr)$ converges to $\opt$.
  \end{enumerate}
  If $\opt$ also satisfies the LICQ \eqref{eq:LICQ}, and the second-order growth condition \eqref{eq:secondordergrowth}, then
  \begin{enumerate}
      \setcounter{enumi}{1}
    \item there exists $\hat{k}$ such that the minimizer active set is identified $\Actset_{\hat{k}} = \opt[\Actset]$ (l.~\ref{alg:lactset}), the Newton procedure (l.~\ref{alg:lnewton}) converges quadratically, and it returns the optimal primal-dual point $(\adh[x]^{\hat{k}}, \adh[\lambda]^{\hat{k}})$.
  \end{enumerate}
\end{theorem}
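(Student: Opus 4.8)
The plan is to assemble the three ingredients established in the previous sections. Item~(1) follows almost directly from \cref{prop:momentrel}: by assumption the SDP solver generates iterates $y_r^k$ converging to the optimal solution of the relaxation \eqref{eq:sdprel}, and since the minimizer of \eqref{eq:POP} is unique, item~(ii) of \cref{prop:momentrel} guarantees that the extracted points $\curr = (L_{y_r^k}(X_1), \dots, L_{y_r^k}(X_n))$ converge to $\opt$. One subtlety: the algorithm may ``further solve the current relaxation or increase $r$'', so I would argue that in either branch the sequence of extracted points still converges to $\opt$ --- either the relaxation order is fixed eventually and the inner solver drives $y_r^k$ to optimality, or $r\to\infty$ and one invokes \cref{prop:momentrel} along the diagonal subsequence. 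I would state this as a short lemma or fold it into the argument.

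For item~(2), the key is that LICQ \eqref{eq:LICQ} implies MFCQ \eqref{eq:MFCQ}, so \cref{th:identif} applies: there is $\epsilon_2 > 0$ such that $\curr \in \ball(\opt, \epsilon_2)$ forces $\Actset^k = \opt[\Actset]$. Since $\curr \to \opt$ by item~(1), there is $k_0$ beyond which the correct active set is identified at every iteration. From that point on, the reduced KKT system is the \emph{fixed} polynomial map $\Psys_{\opt[\Actset]}$ of \eqref{eq:redpopKKT}, and $(\opt, \opt[\lambda])$ is a zero of it, where $\opt[\lambda]$ is the (now unique, by LICQ) multiplier. The crucial point is that $\D\Psys_{\opt[\Actset]}(\opt, \opt[\lambda])$ is invertible: this is exactly the content of LICQ \eqref{eq:LICQ} together with the second-order sufficient condition \eqref{eq:secondordergrowth} --- the Jacobian of the reduced KKT system is the standard saddle-point matrix $\begin{psmallmatrix} \nabla^2_{xx}\Lag & -\nabla g \\ -\nabla g^\top & 0 \end{psmallmatrix}$, which is nonsingular precisely when $\nabla g$ has full column rank and $\nabla^2_{xx}\Lag$ is positive definite on its kernel. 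I would cite \cite[Th.~13.6 or similar]{bonnansNumericalOptimizationTheoretical2006} for this linear-algebra fact.

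Next I would address the $\hat{\alpha}$-test. Since $\D\Psys_{\opt[\Actset]}$ is continuous and invertible at $(\opt, \opt[\lambda])$, and $\Psys_{\opt[\Actset]}(\opt, \opt[\lambda]) = 0$, the quantity $\beta(\Psys_{\opt[\Actset]}, \cdot)$ is continuous near $(\opt, \opt[\lambda])$ and vanishes there; likewise $\mu(\Psys_{\opt[\Actset]}, \cdot)$ and $\|\cdot\|_1$ are continuous with $\|\cdot\|_1 \ge 1$ bounded away from zero. Hence $\hat{\alpha}(\Psys_{\opt[\Actset]}, \cdot)$ is continuous near $(\opt, \opt[\lambda])$ and equals $0$ there. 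The remaining ingredient is that the initial dual iterate $\curr[\lambda] = \lambda_{\Actset^k}(\curr)$ --- Fletcher's least-squares multiplier \eqref{eq:leastsquaremult} --- converges to $\opt[\lambda]$ as $\curr \to \opt$: this holds because, once $\Actset^k = \opt[\Actset]$, \eqref{eq:leastsquaremult} is a least-squares problem with full-column-rank matrix $\nabla g_{\E\cup\opt[\Actset]}(\curr)$ depending continuously on $\curr$, so its unique solution is a continuous function of $\curr$, and at $\curr = \opt$ it returns $\opt[\lambda]$ (uniqueness by LICQ, and $\nabla f(\opt) = \nabla g \,\opt[\lambda]$ gives residual zero). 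Combining: for $k$ large enough, $(\curr, \curr[\lambda])$ is close enough to $(\opt, \opt[\lambda])$ that $\hat{\alpha}(\Psys_{\opt[\Actset]}, (\curr, \curr[\lambda])) \le \tfrac14(13 - 3\sqrt{17})$, so the test at line~\ref{alg:lactset}--onwards triggers; call the first such index $\hat{k}$.

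Finally I would close the loop. By \cref{coro:ab}, at iteration $\hat{k}$ the point $(x^{\hat k}, \lambda^{\hat k})$ is an approximate zero of $\Psys_{\opt[\Actset]}$, so Newton's method converges quadratically to an associated zero $(\adh[x]^{\hat k}, \adh[\lambda]^{\hat k})$ with $\|(\adh[x]^{\hat k},\adh[\lambda]^{\hat k}) - (x^{\hat k},\lambda^{\hat k})\| \le 2\beta$; since $(x^{\hat k},\lambda^{\hat k})$ is within $2\beta + \epsilon$ of $(\opt,\opt[\lambda])$ and the only zero of $\Psys_{\opt[\Actset]}$ in a neighborhood of $(\opt, \opt[\lambda])$ is $(\opt, \opt[\lambda])$ itself (by invertibility of the Jacobian, the zero is locally isolated), taking $\hat k$ large enough forces $(\adh[x]^{\hat k}, \adh[\lambda]^{\hat k}) = (\opt, \opt[\lambda])$. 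The returned point is therefore the optimal primal-dual pair, and since the global optimality check is passed for $\opt$ (it is feasible and globally optimal), the algorithm returns at line~\ref{alg:lnewtonreturn}.

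\textbf{Main obstacle.} I expect the delicate part to be the bookkeeping around the ``further solve / increase $r$'' branch: one must ensure the algorithm does not get stuck oscillating --- i.e., that it actually reaches an iteration $\hat k$ where both the active set is correct \emph{and} the $\hat\alpha$-test fires \emph{and} (after the Newton solve) the global optimality certificate is produced. The cleanest way is to observe that once $\curr$ enters a sufficiently small neighborhood of $\opt$ it stays there (since $\curr \to \opt$), and in that neighborhood all three events are guaranteed simultaneously, so the ``else'' branch is never taken after $\hat k$. A secondary subtlety is that \cref{coro:ab} controls the distance from the \emph{iterate} to its associated zero, not directly to $\opt$; I resolve this via local uniqueness of the zero, which requires the Jacobian nonsingularity established from LICQ~$+$~second-order sufficiency --- so that assumption is load-bearing in two distinct places.
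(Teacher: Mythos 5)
Your proof is correct and follows essentially the same route as the paper's: item (1) via \cref{prop:momentrel}, and item (2) by chaining the active-set identification of \cref{th:identif}, the continuity of Fletcher's multipliers under LICQ, and the existence of a neighborhood of $(\opt,\opt[\lambda])$ on which the $\hat\alpha$-test holds (which the paper factors out as \cref{lmm:quadcvreducedpb} and you re-derive inline via the same saddle-point Jacobian argument). Your additional observation that the associated zero must coincide with $(\opt,\opt[\lambda])$ by local isolation is a small point the paper's proof glosses over, but it does not alter the structure of the argument.
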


Some comments are in order:
\begin{itemize}
\item The uniqueness of the minimizer allows for a simple way to convert the SDP iterates $y_{r}^{k}$ into a point $x^{k}$ in the POP space (line~\ref{line:extraction}), presented in \cref{prop:momentrel}.
    Using more complex tools, such as the Gelfand–Naimark–Segal method \cite{henrion2005detecting,klepMinimizerExtractionPolynomial2018}, or a homotopy continuation method \cite{weisser2019polynomial}, one can weaken this assumption to existence of multiple \emph{distinct} minimizers and preserve the guarantees of \cref{th:algocv}.
  \item The LICQ and second-order growth conditions guarantee the existence of a neighborhood of $\opt$ on which Newton's method converge quadratically, by notably ensuring uniqueness of the optimal multiplier.
    This requirement can be weakened by using a \emph{stabilized} Newton iteration in lieu of the plain Newton iteration for solving the reduced problem~\eqref{eq:reducedPOP}; see \eg \cite{wrightAlgorithmDegenerateNonlinear2005,gillStabilizedSQPMethod2017} and references therein.
\end{itemize}

Before going to the proof of \cref{th:algocv}, we need the following lemma.
\begin{lemma}[Guaranteed quadratic convergence]%
  \label{lmm:quadcvreducedpb}
  Assume that the minimizer $\opt$ satisfies the LICQ \eqref{eq:LICQ} and second-order growth condition \eqref{eq:secondordergrowth}, and consider the system of KKT equations $\Psys_{\opt[\Actset]} = 0$ \eqref{eq:redpopKKT} of the reduced problem \eqref{eq:reducedPOP}.
  There exists a neighborhood $\nhd_{\opt, \opt[\lambda]}$ of $(\opt, \opt[\lambda])$ over which the \ab{} condition is satisfied:
  \begin{equation}
    \hat{\alpha}(\Psys_{\opt[\Actset]}, \cdot) \le \frac{1}{4}(13-3\sqrt{17}) \approx 0.15 \quad \text{for all } x,\lambda \in \nhd_{\opt, \opt[\lambda]}.
  \end{equation}
\end{lemma}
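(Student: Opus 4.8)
The plan is to show that each of the three quantities $\beta$, $\mu$, and $(\max_i d_i)^{3/2}/(2\|z\|_1)$ entering $\hat\alpha(\Psys_{\opt[\Actset]}, \cdot)$ is continuous on a neighborhood of $(\opt, \opt[\lambda])$, and that $\hat\alpha$ takes a value strictly below $\tfrac14(13-3\sqrt{17})$ at the point $(\opt, \opt[\lambda])$ itself; continuity then transports the strict inequality to a whole neighborhood. The decisive observation is that $\beta(\Psys_{\opt[\Actset]}, (\opt,\opt[\lambda])) = 0$, because $(\opt, \opt[\lambda])$ is by definition a zero of the KKT map $\Psys_{\opt[\Actset]}$ of the reduced problem \eqref{eq:reducedPOP} — this uses that $\opt[\lambda]$ is the (unique, by LICQ) multiplier, that the active set of the reduced problem is exactly $\E\cup\opt[\Actset]$, and that $\opt$ satisfies the KKT conditions \eqref{eq:KKT}. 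Hence $\hat\alpha(\Psys_{\opt[\Actset]}, (\opt,\opt[\lambda])) = 0 < \tfrac14(13-3\sqrt{17})$, with room to spare.

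The main technical content is therefore to establish the requisite regularity. First I would argue that $\D\Psys_{\opt[\Actset]}(\opt,\opt[\lambda])$ is invertible: this is the standard fact that, under LICQ \eqref{eq:LICQ} and the second-order sufficient condition \eqref{eq:secondordergrowth}, the KKT system of an equality-constrained problem has nonsingular Jacobian at the solution (the Jacobian is the bordered KKT matrix $\begin{psmallmatrix} \nabla^2_{xx}\Lag & -A^\top \\ A & 0\end{psmallmatrix}$ with $A$ the Jacobian of the active constraints; invertibility follows from LICQ giving $A$ full row rank and \eqref{eq:secondordergrowth} giving positive definiteness of $\nabla^2_{xx}\Lag$ on $\ker A$ — see the reference \cite{bonnansNumericalOptimizationTheoretical2006} already cited in the paper). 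Since invertibility is an open condition and $\D\Psys_{\opt[\Actset]}$ depends polynomially (hence continuously) on $(x,\lambda)$, the map $(x,\lambda)\mapsto \D\Psys_{\opt[\Actset]}(x,\lambda)^{-1}$ is continuous on a neighborhood $\nhd_0$ of $(\opt,\opt[\lambda])$. On $\nhd_0$, $\beta = \|\D\Psys_{\opt[\Actset]}^{-1}\Psys_{\opt[\Actset]}\|$ is continuous and vanishes at $(\opt,\opt[\lambda])$; $\mu = \max\{1, \|\Psys_{\opt[\Actset]}\|_p \,\|\D\Psys_{\opt[\Actset]}^{-1}\Delta_{(d)}\|\}$ is continuous (the polynomial norm $\|\Psys_{\opt[\Actset]}\|_p$ is continuous in $(x,\lambda)$ since the coefficients of $\Psys_{\opt[\Actset]}$ depend continuously on the base point only through $\lambda$ — in fact $\Psys_{\opt[\Actset]}$ has fixed degree and coefficients affine in $\lambda$ — and $\Delta_{(d)}$ is continuous and $\|\cdot\|_1$-based, bounded below by $1$); and $(\max_i d_i)^{3/2}/(2\|z\|_1)$ is continuous and finite since $\|z\|_1^2 = 1 + \|z\|^2 \ge 1 > 0$ everywhere. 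Consequently $\hat\alpha(\Psys_{\opt[\Actset]}, \cdot)$ is continuous on $\nhd_0$.

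Finally I would conclude: since $\hat\alpha(\Psys_{\opt[\Actset]}, \cdot)$ is continuous on $\nhd_0$ and equals $0$ at $(\opt,\opt[\lambda])$, there is a neighborhood $\nhd_{\opt,\opt[\lambda]} \subseteq \nhd_0$ on which $\hat\alpha(\Psys_{\opt[\Actset]}, \cdot) \le \tfrac14(13-3\sqrt{17})$, which is the claim. I expect the only real subtlety to be the nonsingularity of the Jacobian at the solution — everything else is a continuity argument — so that is where I would be most careful, citing \cite{bonnansNumericalOptimizationTheoretical2006} for the precise statement and noting that $\opt[\lambda]$ is well-defined (unique) precisely because LICQ holds. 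One should also double-check that the "reduced problem's active set equals $\E\cup\opt[\Actset]$" is exactly the setup of \eqref{eq:reducedPOP}, so that $\Psys_{\opt[\Actset]}(\opt,\opt[\lambda]) = 0$ holds verbatim; this is immediate from the definitions \eqref{eq:reducedPOP}–\eqref{eq:redpopKKT} and the KKT conditions \eqref{eq:KKT}.
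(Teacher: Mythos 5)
Your proposal is correct and follows essentially the same route as the paper's proof: both hinge on the invertibility of the KKT Jacobian at $(\opt,\opt[\lambda])$ under LICQ and the second-order condition (via the same bordered-matrix argument and the same reference), the vanishing of the residual $\Psys_{\opt[\Actset]}(\opt,\opt[\lambda])=0$, and the boundedness of the remaining factors of $\hat\alpha$ near the solution. The only cosmetic difference is that you phrase the conclusion as continuity of $\hat\alpha$ with value $0$ at the solution, while the paper bounds $\hat\alpha$ by a constant multiple of $\|\Psys_{\opt[\Actset]}(\cdot)\|$ and lets that factor tend to zero; note also that $\|\Psys_{\opt[\Actset]}\|_p$ is simply a constant (the system is fixed), so no continuity argument is needed for it.
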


\begin{proof}
  First, recall that
  \begin{equation}
    \hat{\alpha}(\Psys, \vPsys) = \left\| \D_\Psys(\vPsys)^{-1}  \Psys(\vPsys)  \right\| \frac{(\max_{i=1, \ldots, \Psysn} d_{i})^{3/2}}{2\|\vPsys\|_1} \max \{1, \|\Psys\|_{p} \|D_\Psys(\vPsys)^{-1} \Delta_{(d)}(\vPsys)\|\}.
  \end{equation}
  We have that $\|\cdot\|_{1}\ge 1$, $\|\vPsys\|_{1}$ is bounded near $(\opt, \opt[\lambda])$, and that $\Delta_{(d)}(\vPsys)$ is a diagonal matrix with entries $d_i^{1/2}\|\vPsys\|_1^{d_i-1}$.
  Therefore, there exist positive constants $C_{1}$ and $C_{2}$ such that:
  \begin{equation}
    \hat{\alpha}(\Psys, \vPsys) \le C_{1} \| \D_\Psys(\vPsys)^{-1}\|  \|\Psys(\vPsys)\| \max \{1, C_{2} \|D_\Psys(\vPsys)^{-1} \|\}.
  \end{equation}
  Furthermore, the term $\|\D_{\Psys}(\cdot)\|$ is bounded near $\opt[z]$.
  Indeed,
  \begin{equation}
    \D_{\Psys}(z) =
  \begin{pmatrix}
    \nabla^{2}_{xx} \Lag(z) & \Jac_{g_{\E \cup \opt[\Actset]}}(z_{x}) \\ %
    \Jac_{g_{\E \cup \opt[\Actset]}}(z_{x}) & 0
  \end{pmatrix},
  \end{equation}
  where $\Jac_{g_{\E \cup \opt[\Actset]}}(z_{x})$, the jacobian of the active constraints at the primal point $z_{x}$, is full-rank by LICQ, and $\nabla^{2}_{xx} \Lag(z)$ is invertible when restricted to $\ker \Jac_{g_{\E \cup \opt[\Actset]}}(z_{x})$, by the second-order growth condition.
  Therefore, $\D_{\Psys}(\opt[z])$ is invertible by \cite[Prop. 14.1]{bonnansNumericalOptimizationTheoretical2006}.
  By continuity of the mappings, there exist a neighborhood of $(\opt, \opt[\lambda])$ over which $\|\D_{F}(\cdot)^{-1}\|$ is bounded.
  The result follows, since $F(x, \lambda)$ goes to zero as $(x, \lambda)$ go to $(\opt, \opt[\lambda])$.
\end{proof}

\begin{proof}[Proof of \cref{th:algocv}]
  \emph{Item i)}
  First, if the global optimality test at line~\ref{alg:lstopttest} is never satisfied, then \cref{alg:hybrid} solves with growing accuracy relaxations of growing order.
  Therefore, the extracted points $\curr$ converge to $\opt$ by \cref{prop:momentrel}.
  In this case, the sequence $\curr$ is not affected by lines~\ref{alg:lextr} to~\ref{alg:lnewtonreturn}.
  Otherwise, if the test at line~\ref{alg:lstopttest} is satisfied, then algorithm returns the minimizer after a finite number of iterations.

  \emph{Item ii)}
  \Cref{th:identif} provides the existence of a neighborhood $\nhd_{\opt}$ of the minimizer $\opt$ over which its active set is identified $\curr[\Actset] = \opt[\Actset]$. The theorem applies: since point $\opt$ satisfies the LICQ \eqref{eq:LICQ}, the MFCQ \eqref{eq:MFCQ} and KKT \eqref{eq:KKT} also hold \cite[p. 202]{bonnansNumericalOptimizationTheoretical2006}.
  Besides, \cref{lmm:quadcvreducedpb} shows that, for the reduced problem~\eqref{eq:reducedPOP} with $\opt[\Actset] = \opt[\Actset]$, there exists a neighborhood $\nhd_{\opt[z]} \subset \bbR^{n}\times\bbR^{|\opt[\Actset]|}$ of the primal-dual optimal point $\opt[z] = (\opt, \opt[\lambda])$ for which the \ab{} test is positive, and thus from which Newton's method converges quadratically.
  Finally, the initial dual point of Newton's method is defined as the least square multiplier $\lambda_{\Actset}$ in \eqref{eq:leastsquaremult}.
  It is characterized by its optimality condition:
  \begin{align}
    \lambda_{\Actset}(x) = - [\Jac_{h_{\Actset}}(x) \Jac_{h_{\Actset}}(x)^{\top}]^{-1} \Jac_{h_{\Actset}}(x) \nabla f (x),
  \end{align}
  where the mapping $h_{\Actset}:\bbR^{n}\to\bbR^{\mE + |\Actset|}$ is the concatenation of the equality and active inequality constraints mappings.
  When $\Actset=\opt[\Actset]$, the mapping $h_{\opt[\Actset]}$ is smooth, and the LICQ condition implies that its Jacobian is full-rank near $\opt$.
  Therefore, $\lambda_{\opt[\Actset]}$ is a Lipschitz continuous function near $\opt$.

  Putting the pieces together, since the iterates $(x^{k})$ converge to $\opt$ by item \emph{i)}, $x^{k}$ eventually belongs to $\nhd_{\opt}$.
  For all following iterations, the correct active set is estimated.
  By continuity of the multipliers, the pair $(\curr, \curr[\lambda])$ converges to $(\opt, \opt[\lambda])$, so that it eventually belongs to $\nhd_{\opt[z]}$, from which point Newton's method converges quadratically to $\opt, \opt[\lambda]$.
\end{proof}

\section{Experimental results}
\label{sec:experiments}

In this section, we demonstrate the applicability of the hybrid methodology for solving polynomial optimization problems.
We first review the Optimal Power Flow problem \cite{cain2012history,josz2014application,ghaddarOptimalPowerFlow2016}, and then present and discuss the behavior of the hybrid method on IEEE test cases.

\subsection{ACOPF as a Polynomial Optimization Problem}
\label{sec:acopf-as-polynomial}

In this section, we present a comprehensive evaluation on the so-called rectangular power-voltage formulation of the Alternative Current Optimal Power Flow (ACOPF).
There, the electric power transmission system is represented by an undirected graph with vertices $N$ (called buses) and edges $E\subseteq N\times N$ (called branches).
Some buses, $G\subseteq N$, are called generators.
The total number of buses is $n= | N |$.
The variables are:
\begin{itemize}
  \item $P_k^g + j Q_k^g \in \mathbb{C}$: total power generated at bus $k\in G$.
  \item $V_k \eqdef \mfR{V_k} + j \mfI{V_k} \in \mathbb{C}$: voltage at bus $k\in G$.
  \item $S_{lm} \eqdef P_{lm} + j Q_{lm}\in \mathbb{C}$: the apparent power flow on branch $(l,m)\in E$.
\end{itemize}

A number of constants are in use:
\begin{itemize}
  \item $P_k^d + j Q_k^d \in \mathbb{C}$: active and reactive load at each bus $k\in N$.
  \item $y\in \mathbb{C}^{n\times n}$: the network admittance matrix with the same sparsity pattern as the network.
  \item $\bar{b}_{lm} \in \mathbb{R}$: the value of the shunt element at branch $(l,m)\in E$.
  \item $g_{lm} + j b_{lm} \in \mathbb{C}$: series admittance at branch $(l,m)\in E$.
  \item $P_k^{min}, P_k^{max}, Q_k^{min}, Q_k^{max} \in \mathbb{R}$: limits on active and reactive generation capacity at bus $k\in N$; $P_k^{min}= P_k^{max}= Q_k^{min}= Q_k^{max} = 0$ for all $k\in N/G$.
  \item $V_k^{min}, V_k^{max} \in \mathbb{R}$: limits on absolute value of $V_k$.
  \item $S_{lm}^{max} \in \mathbb{R}$: limits on absolute value of the apparent power of branch $(l,m)\in E$.
\end{itemize}

The relations between the variables $P_k^g, Q_k^g, V_k, P_{lm}, Q_{lm}$ are represented in terms of voltages, as power generated at each bus $k\in G$:
\begin{align+}
  P_k^g &= P_k^d + \mfR{V_k} \sum_{i=1}^n( \mfR{y_{ik}} \mfR{V_i} - \mfI{y_{ik}}\mfI{V_i}) + \mfI{V_k} \sum_{i=1}^n( \mfI{y_{ik}} \mfR{V_i} + \mfR{y_{ik}}\mfI{V_i}),
          \label{Pkg}
  \\ %
  Q_k^g &= Q_k^d + \mfR{V_k} \sum_{i=1}^n( - \mfI{y_{ik}} \mfR{V_i} - \mfR{y_{ik}}\mfI{V_i}) +
          \mfI{V_k} \sum_{i=1}^n( \mfR{y_{ik}} \mfR{V_i} - \mfI{y_{ik}}\mfI{V_i}),
          \label{Qkg}
\end{align+}

and as power-flow equations at each branch $(l, m)\in E$:
\begin{equation+}
  \label{Plm}
  P_{lm} = b_{lm}( \mfR{V_l} \mfI{V_m} - \mfR{V_m}\mfI{V_l}) + g_{lm} \left( (\mfR{V_l})^2 + (\mfI{V_l})^2 - \mfI{V_l}\mfI{V_m}- \mfR{V_l}\mfR{V_m}\right),
\end{equation+}
\begin{align}
  \label{Qlm}
  Q_{lm} = \;&b_{lm} \left( \mfR{V_l} \mfI{V_m} - \mfI{V_l}\mfI{V_m} - (\mfR{V_l})^2 - (\mfI{V_l})^2\right) \\
  &+ g_{lm} ( \mfR{V_l} \mfI{V_m} - \mfR{V_m} \mfI{V_l} - \mfR{V_m}\mfI{V_l}) - \frac{\bar{b}_{lm}}{2}\left((\mfR{V_l})^2 + (\mfI{V_l})^2\right).
\end{align}

The objective is to minimize the cost of total power generation where the cost at generator $k\in G$ is denoted as $$f_k(P_k^g) = c_{k, 2} (P_k^g)^2 + c_{k,1} P_k^g + c_{k,0},$$ subject to the constraints \eqref{Pkg}$-$\eqref{Qlm} and certain simple additional constraints:
\begin{align}
  \min_{P_{k}^{g}, Q_{k}^{g}, V_{k}\in\mathbb{C}, P_{lm}, Q_{lm}} &\sum_{k\in G} \left( c_{k, 2} (P_k^g)^2 + c_{k,1} P_k^g + c_{k,0} \right) \tag{\textbf{ACOPF}}\label{P2}\\
  \st\ & P_k^{min} \leq P_k^g \leq P_k^{max}\\
        & Q_k^{min} \leq Q_k^g \leq Q_k^{max}\\
        & (V_{k}^{min})^2\leq (\mfR{V_k})^2 + (\mfI{V_k})^2 \leq (V_k^{max})^2\\
        & P_{lm}^2 +Q_{lm}^2 \leq (S_{lm}^{max})^2,\\
        &\eqref{Pkg}-\eqref{Qlm}.
\end{align}

This problem writes equivalently as a POP \eqref{eq:POP} in variable $x=(\Re V, \Im V) \in \bbR^{2N}$, where $Y_{k}$, $\bar{Y}_{k}$, $M_{k}$, $Y_{lm}$, $\bar{Y}_{lm}$ are $2n\times2n$ coefficient matrices as in \cite{ghaddarOptimalPowerFlow2016}.

\begin{align}
  \min_{x\in\bbR^{2N}} &\sum_{k\in G} \left( c_{k, 2} [P_k^g + \tr(Y_k xx^T)]^2 + c_{k,1} [P_k^g + \tr(Y_k xx^T)] + c_{k,0} \right) \tag{\textbf{POP}}\label{P3}\\
  \st\ & P_k^{min} \leq P_k^g + \tr(Y_k xx^T) \leq P_k^{max}\\
        & Q_k^{min} \leq Q_k^g + \tr(\bar{Y}_k xx^T) \leq Q_k^{max}\\
        & (V_{k}^{min})^2\leq  \tr(M_k xx^T) \leq (V_k^{max})^2\\
        & (\tr(Y_{lm} xx^T) )^2 +(\tr(\bar{Y}_{lm} xx^T) )^2 \leq (S_{lm}^{max})^2%
\end{align}

\subsection{An implementation}
\label{sec:comm-numer-exper}

We have implemented the following algorithms:
\begin{enumerate}
  \item coordinate descent of the first moment relaxation of \eqref{P3}, detailed in \cite{marecekLowrankCoordinatedescentAlgorithm2017};
  \item a globalized version of Newton directly on \eqref{P3}, \eg{} \cite[Alg. 4.3]{bonnansNumericalOptimizationTheoretical2006};
  \item the hybrid scheme \cref{alg:hybrid}, without the backtracking: we run coordinate descent on the first-order relaxation, then switch to Newton's method as soon as the \ab{} test is satisfied for the reduced problem.
\end{enumerate}
In order to study the importance of the \ab{} test, we will also consider the algorithm where coordinate descent is run for a fixed number of iterations before switching to Newton's method.

The \ab{} test has a simple expression for \eqref{P3}.
The polynomials in the KKT equations of reduced problems are of degree $3$, so that $\Delta_{(d)}(\vPsys) = 3^{1/2}\|\vPsys\|_1^2 I_{2n\times 2n}$, and
\begin{align}
\hat{\alpha}(\Psys, \vPsys) &= \beta(\Psys, \vPsys) \max \{1, \|\Psys\|_{p} \|D_\Psys(\vPsys)^{-1} \Delta_{(d)}(\vPsys)\|\} \frac{(\max_{i=1, \ldots, \Psysn} d_{i})^{3/2}}{2\|\vPsys\|_1} \\
                       &= \beta(\Psys, \vPsys) \max\left( \frac{1.5\sqrt{3}}{\|\vPsys\|_{1}}, 4.5\sqrt{2n}\frac{\|\Psys\|_{p} \|\vPsys\|_{1}}{\sigma} \right),
\end{align}
where the spectral norm of $\|D_\Psys(\vPsys)^{-1}\|$ is replaced by the inverse of $\sigma$, the smallest eigenvalue of $D_\Psys(\vPsys)$.
We employ this last expression in our experiments.

\subsection{Results}

We have tested our implementation on IEEE 30-bus, 118-bus, 300-bus, and 2383-bus test cases (case30, case118, case300, and case2383wp), as distributed with the MatPower library \cite{zimmermanMATPOWERSteadyStateOperations2011}.

Let us discuss the results in turn.
On the 30-bus IEEE test case, \cref{fig:case30cdNewton} demonstrates that switching early from coordinate descent (CD) to Newton's method (after 2, 4, or 8 epochs of the coordinate descent, cf. \texttt{CD 2}, \texttt{CD 4}, or \texttt{CD 8}) generates wild oscillations in objective function value and the cardinality of the active set. This results in a slow decrease in infeasibility.
On this particular instance, there is no discernible impact on the objective function value attained eventually, but this is not the case in general.

On the 118-bus IEEE test case, \Cref{fig:case118cdNewton} (center and right subfigures) illustrate that the objective function value attained
by a combination of a first-order method and second-order method
can vary substantially, depending on the number epochs of the coordinate descent (CD) performed, until the switch to the second-order method.
This illustrates that Newton's method may be attracted to (first-order stationary) points that are not the global minimizer, and highlights the need for a backtracking procedure and a global optimality check.
On the same instance, \Cref{fig:case118compare} (left plot) then confirms that the hybrid method with the \ab{} test improves upon either coordinate descent or the Newton method; the fast quadratic convergence of Newton's method near first-order critical points is illustrated by the sharp decrease of infeasibility and stabilization of function value.
For the sake of completeness, notice that \cref{fig:case118compare} also shows that away from critical points, Newton's method can be quite slow.
As a first-order method, coordinate descent on its own can be quite slow: it fails to produce any significant reduction of infeasibility in the allowed time in \cref{fig:case118compare}. %
On the 300-bus IEEE test case, in \cref{fig:case300} we observe a very similar performance to the 118-bus test case (\Cref{fig:case118compare}).

On the 2383-bus instance, which captures the transmission system of Poland at its winter peak,
we use a slightly more complicated model, as described in Section 5 of \cite{marecekLowrankCoordinatedescentAlgorithm2017},
across both the first- and second-order methods,
but the performance of the code is largely unaffected.
\Cref{fig:case2383wp} displays a very similar behaviour to \Cref{fig:case300} %
in terms of the convergence rate (bottom row in \Cref{fig:case2383wp}), although the absolute run-times are necessarily longer (top row in \Cref{fig:case2383wp}).
The proposed hybrid method compares favorably relative to coordinate descent and Newton's method on their own, especially relative to time.

\begin{figure*}[htbp]
  \center
  \includegraphics[width=0.32\textwidth]{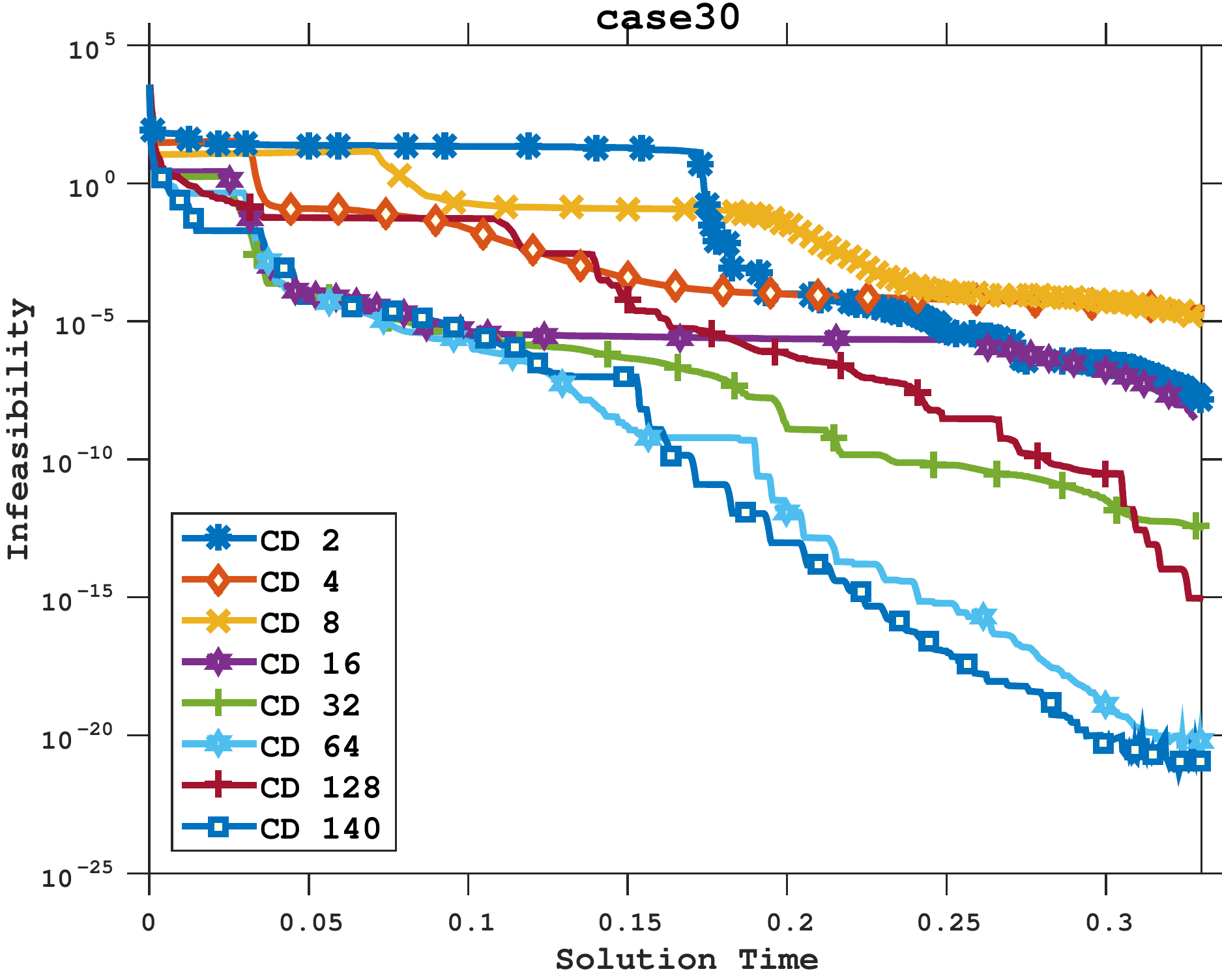}
  \includegraphics[width=0.32\textwidth]{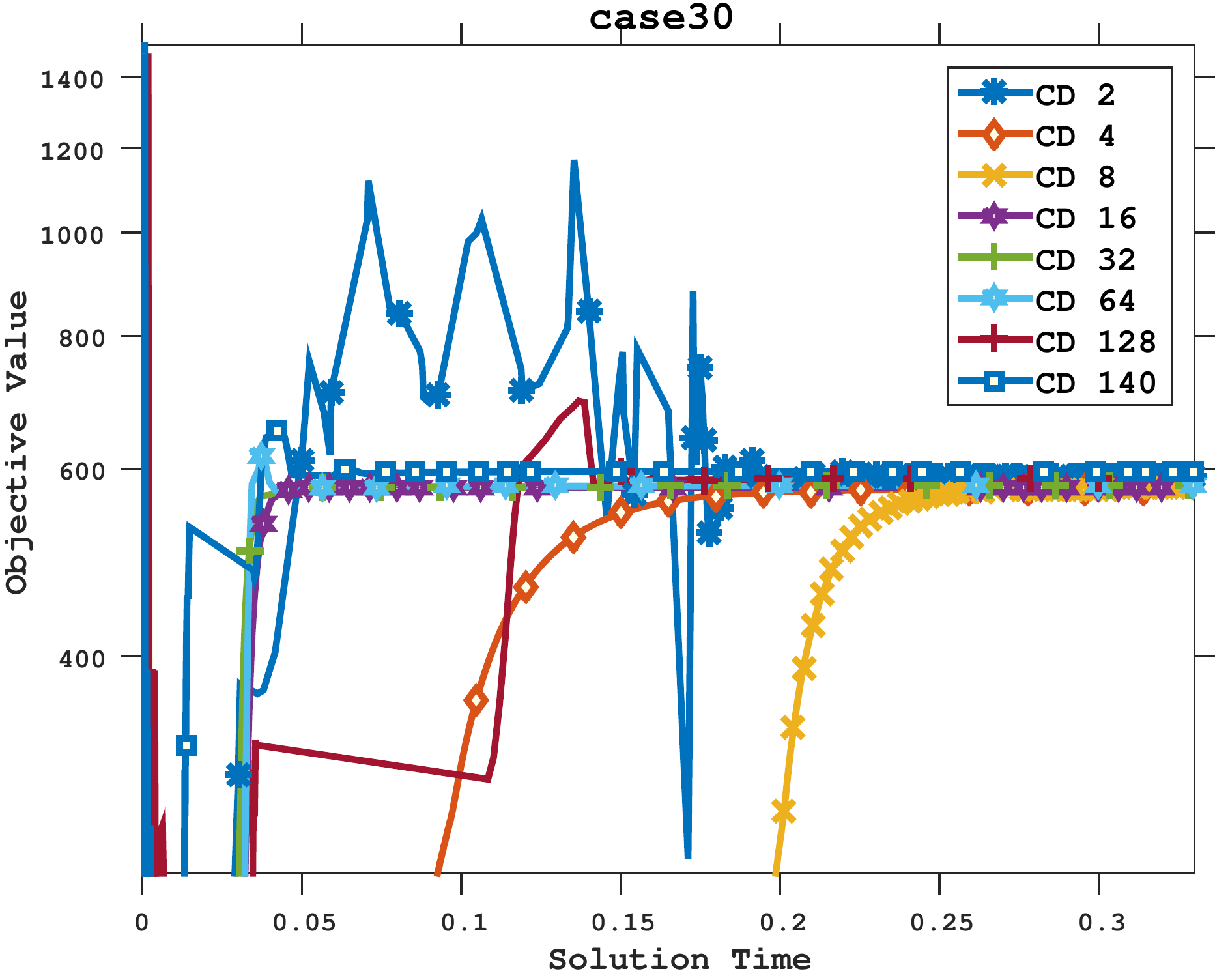}
  \includegraphics[width=0.32\textwidth]{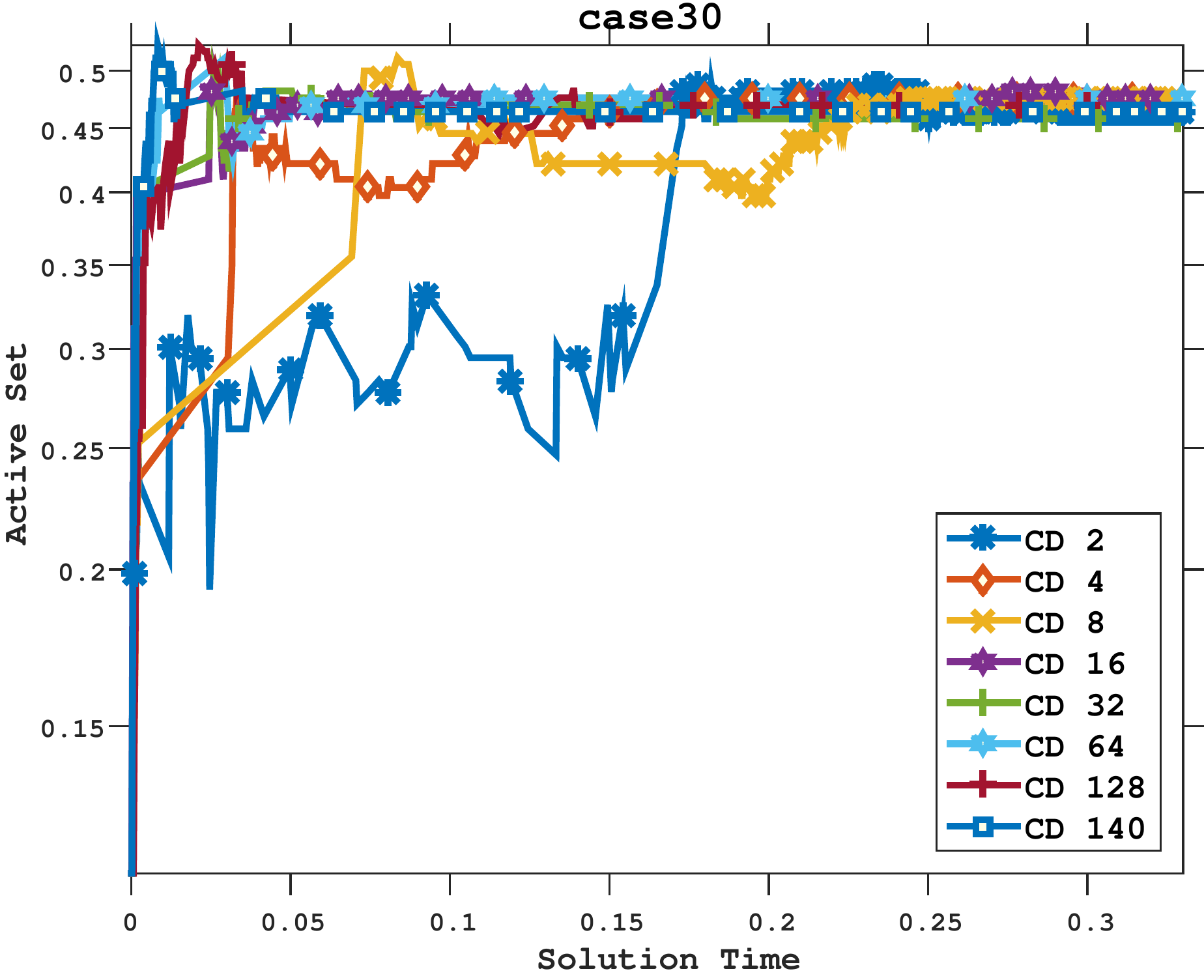}
  \\ $ $\\
  \includegraphics[width=0.32\textwidth]{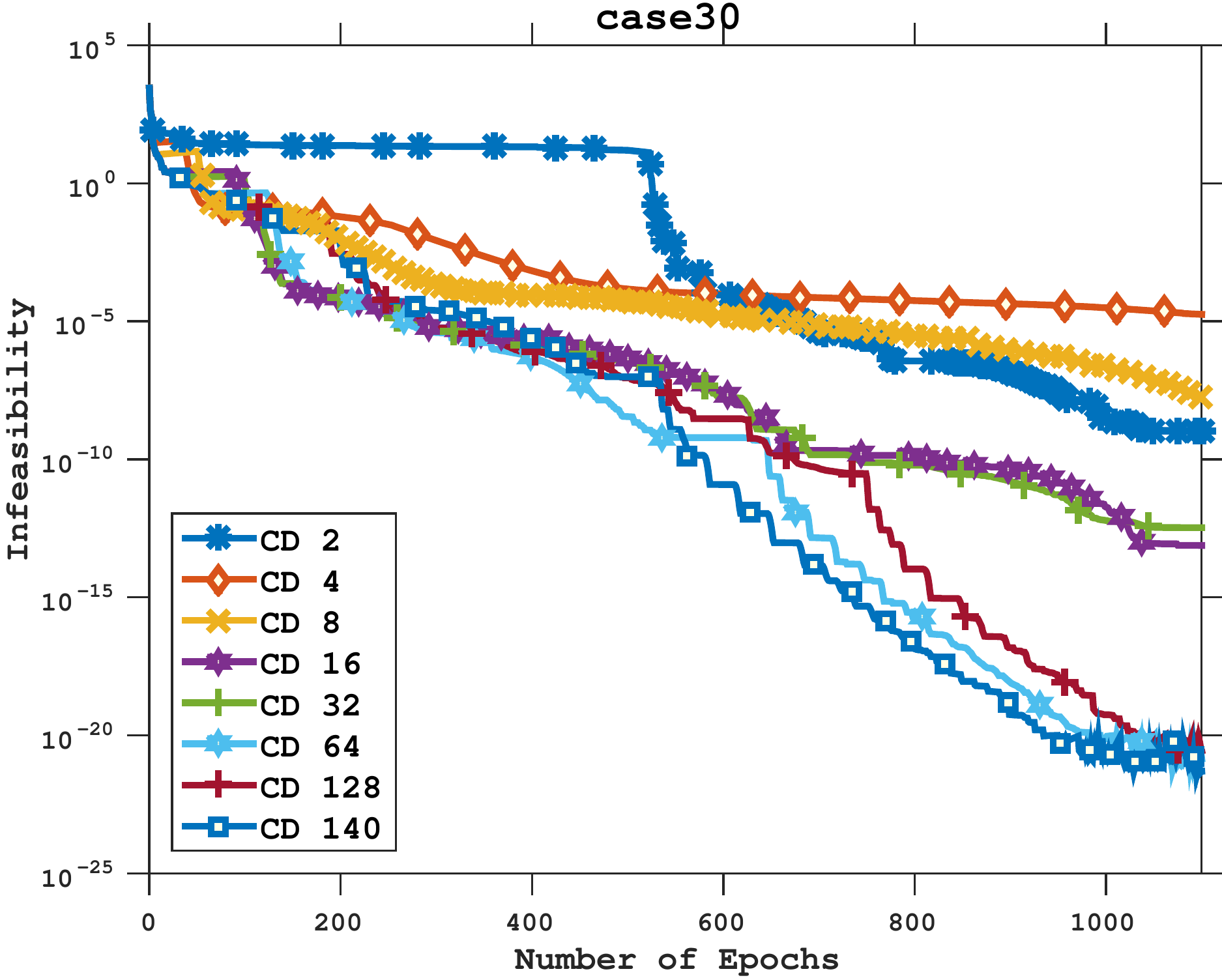}
  \includegraphics[width=0.32\textwidth]{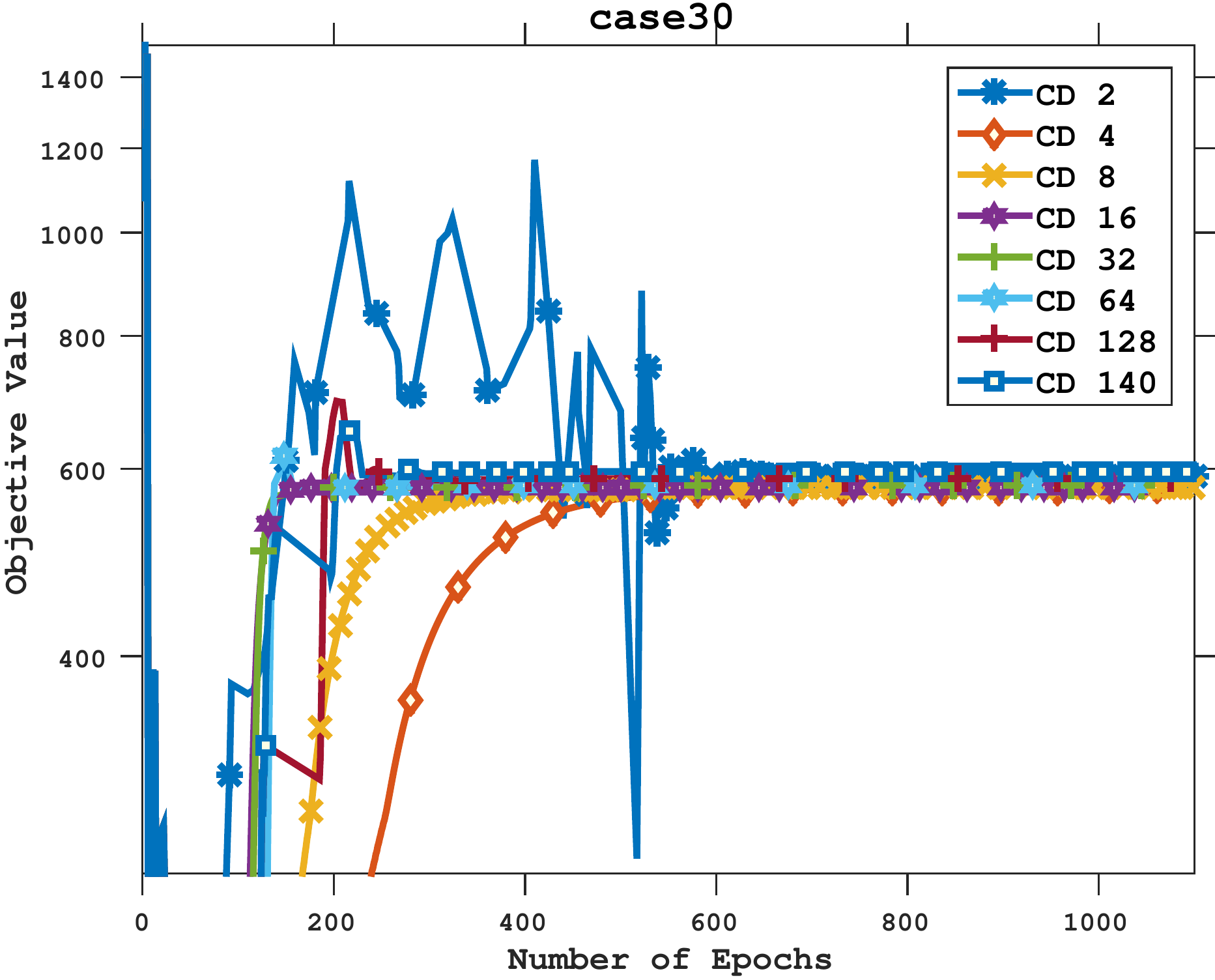}
  \includegraphics[width=0.32\textwidth]{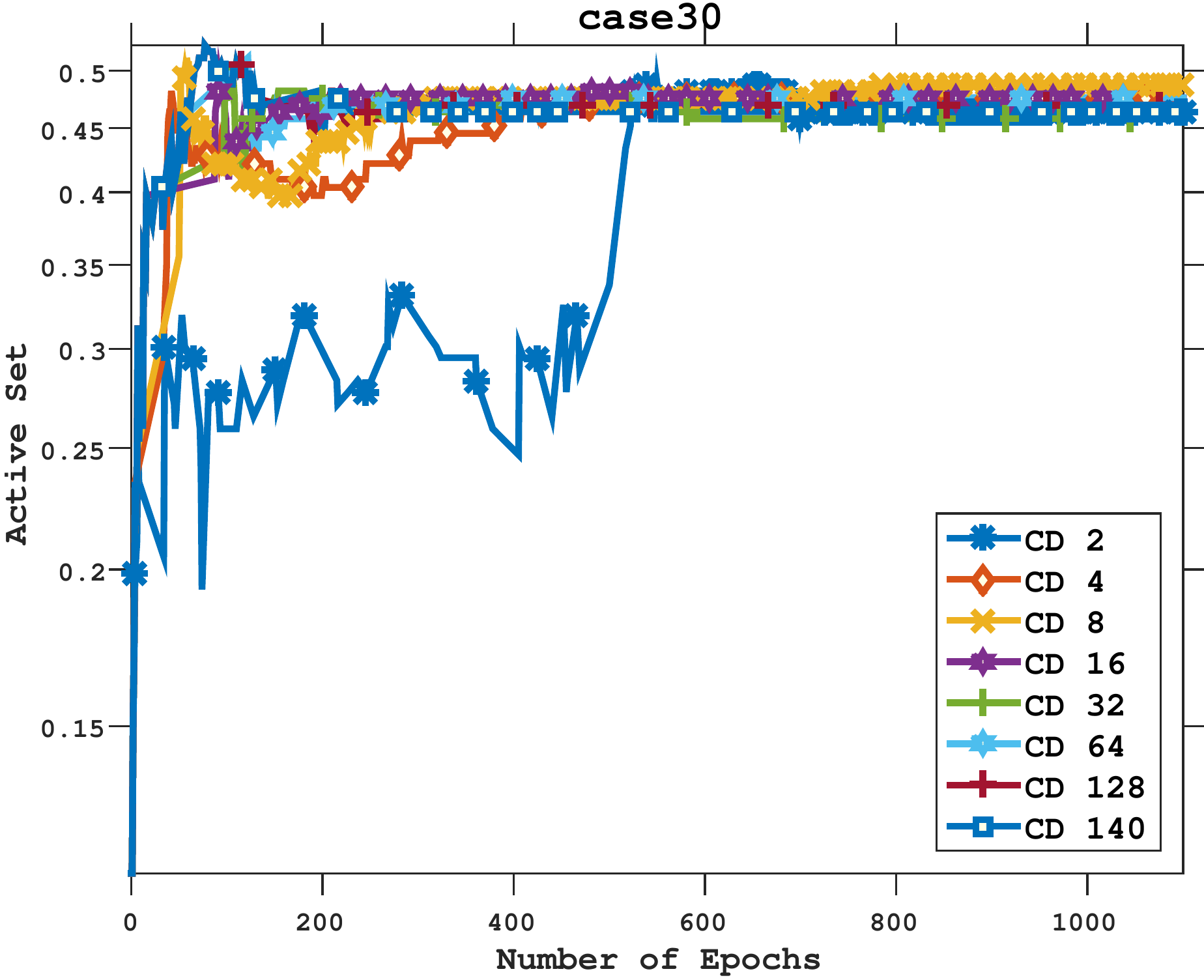}
  \caption{Results on IEEE 30-bus test case: infeasibility, objective function, and the proportion of constraints in the active set over time.}
  \label{fig:case30cdNewton}
\end{figure*}

\begin{figure*}[htbp]
  \center
  \includegraphics[width=0.32\textwidth]{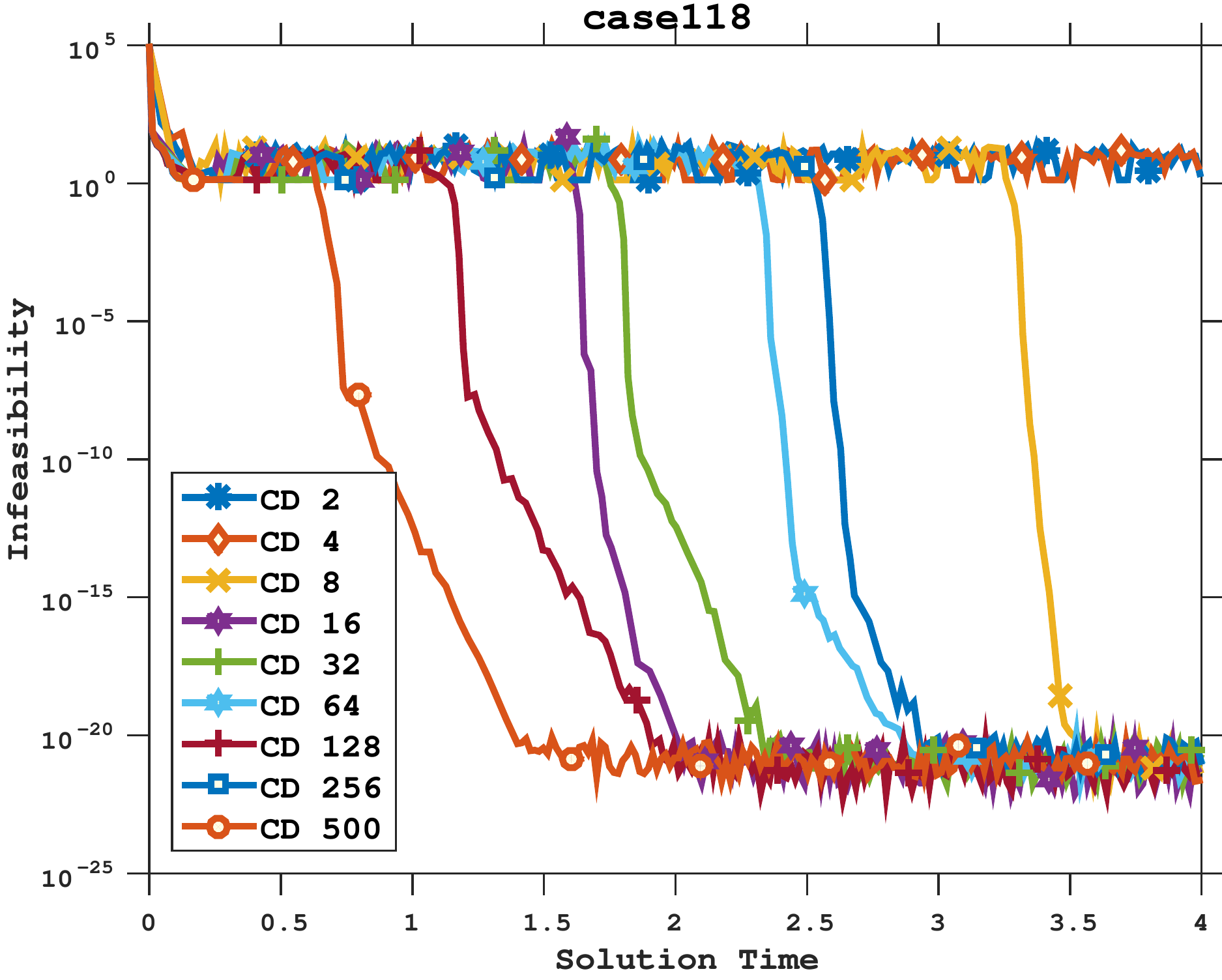}
  \includegraphics[width=0.32\textwidth]{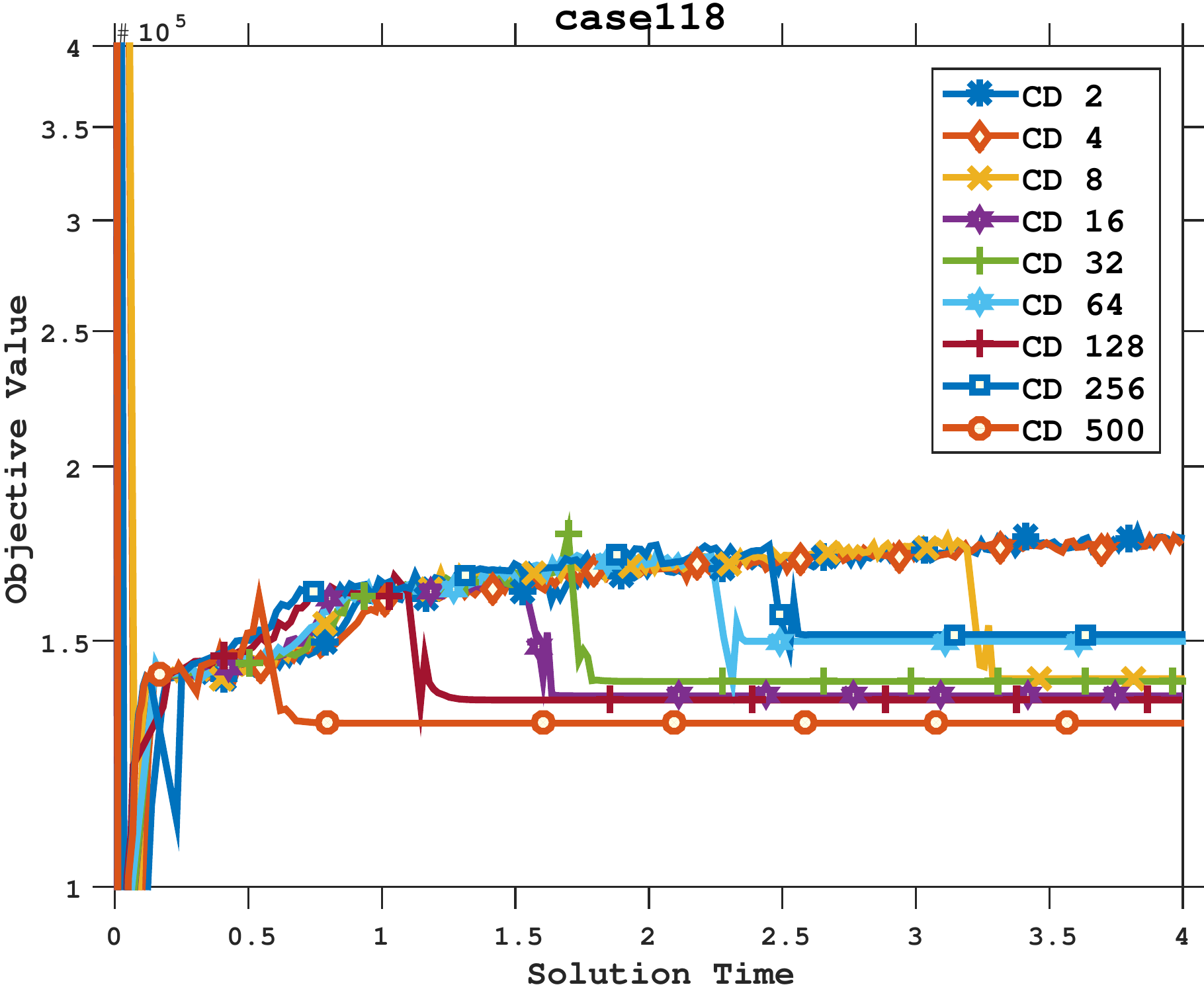}
  \includegraphics[width=0.32\textwidth]{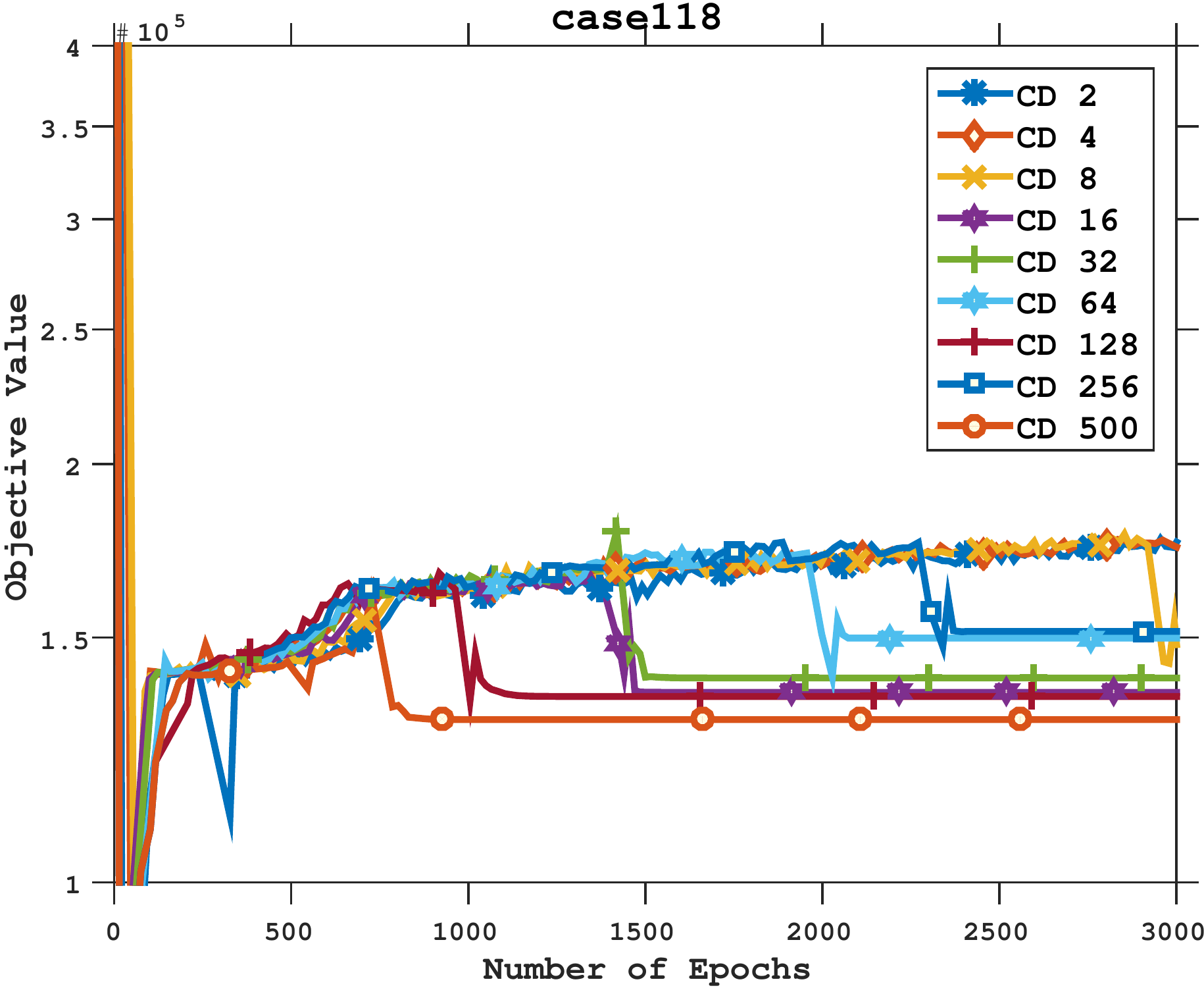}
  \caption{Results on IEEE 118-bus test case: infeasibility and objective function over time.}
  \label{fig:case118cdNewton}
\end{figure*}
\begin{figure*}[htbp]
  \center
  \includegraphics[width=0.32\textwidth]{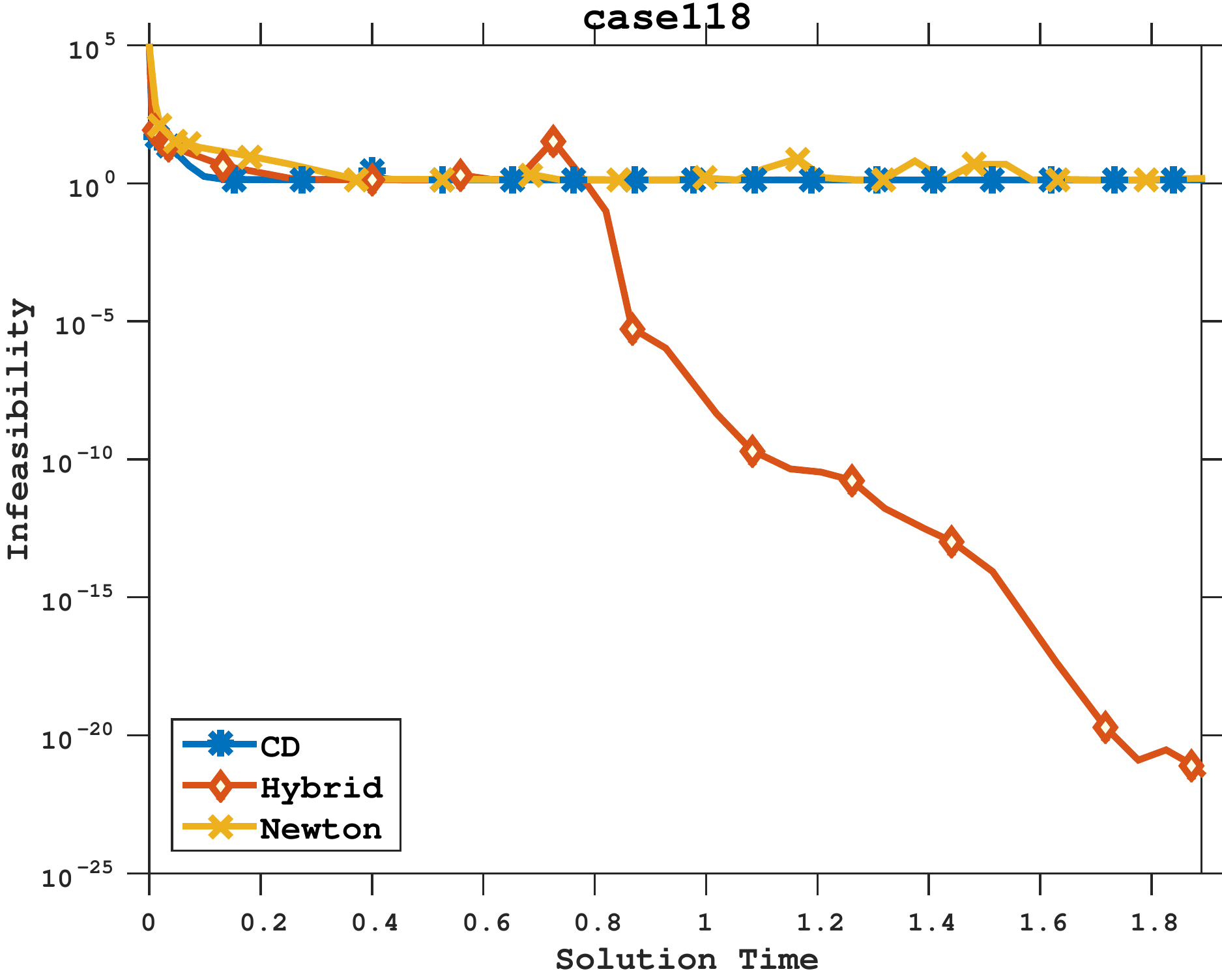}
  \includegraphics[width=0.32\textwidth]{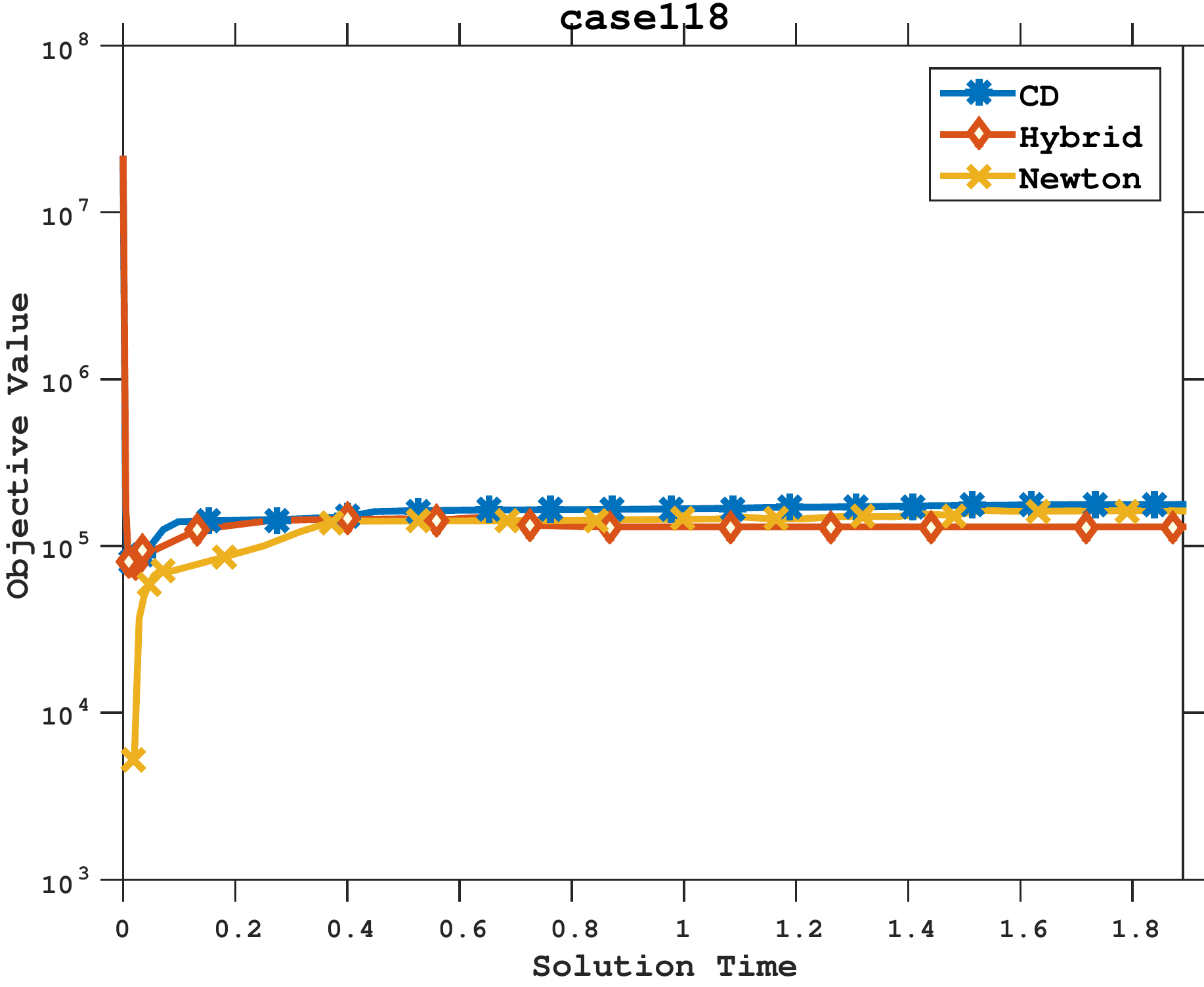}
  \\ $ $\\
  \includegraphics[width=0.32\textwidth]{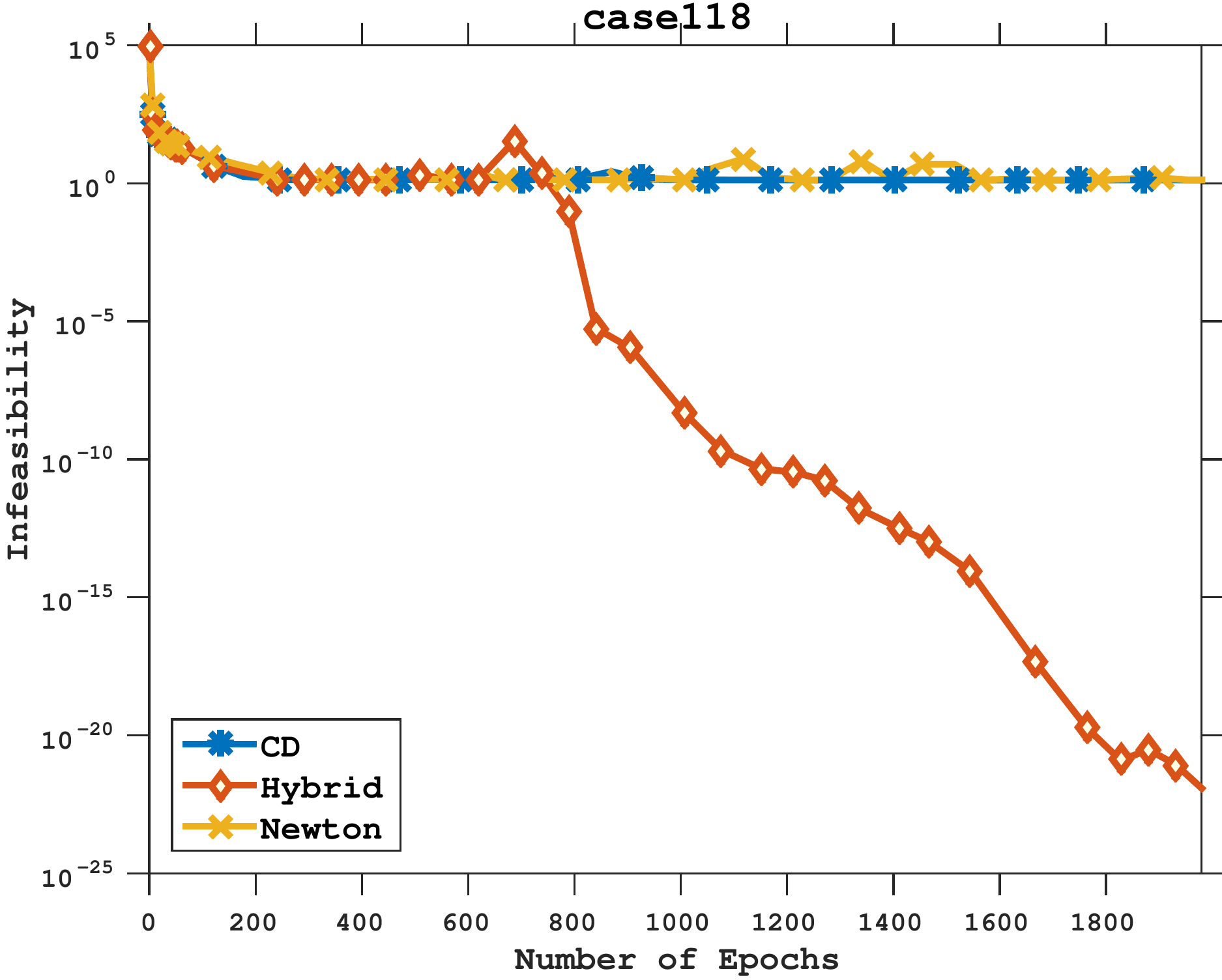}
  \includegraphics[width=0.32\textwidth]{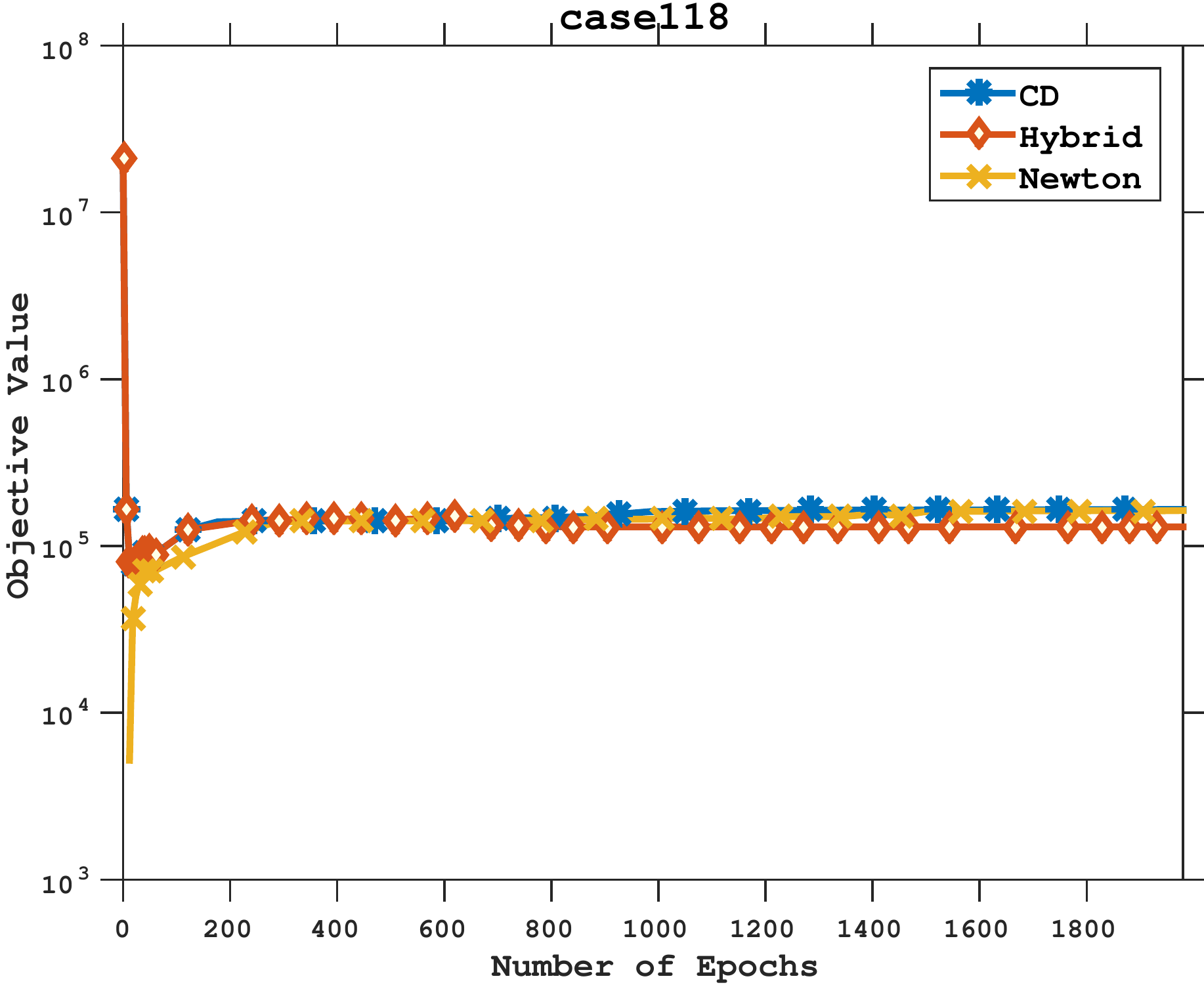}
  \caption{Results on IEEE 118-bus test case: infeasibility and objective function over time.}
  \label{fig:case118compare}
\end{figure*}
\begin{figure*}[htbp]
  \center
  \includegraphics[width=0.32\textwidth]{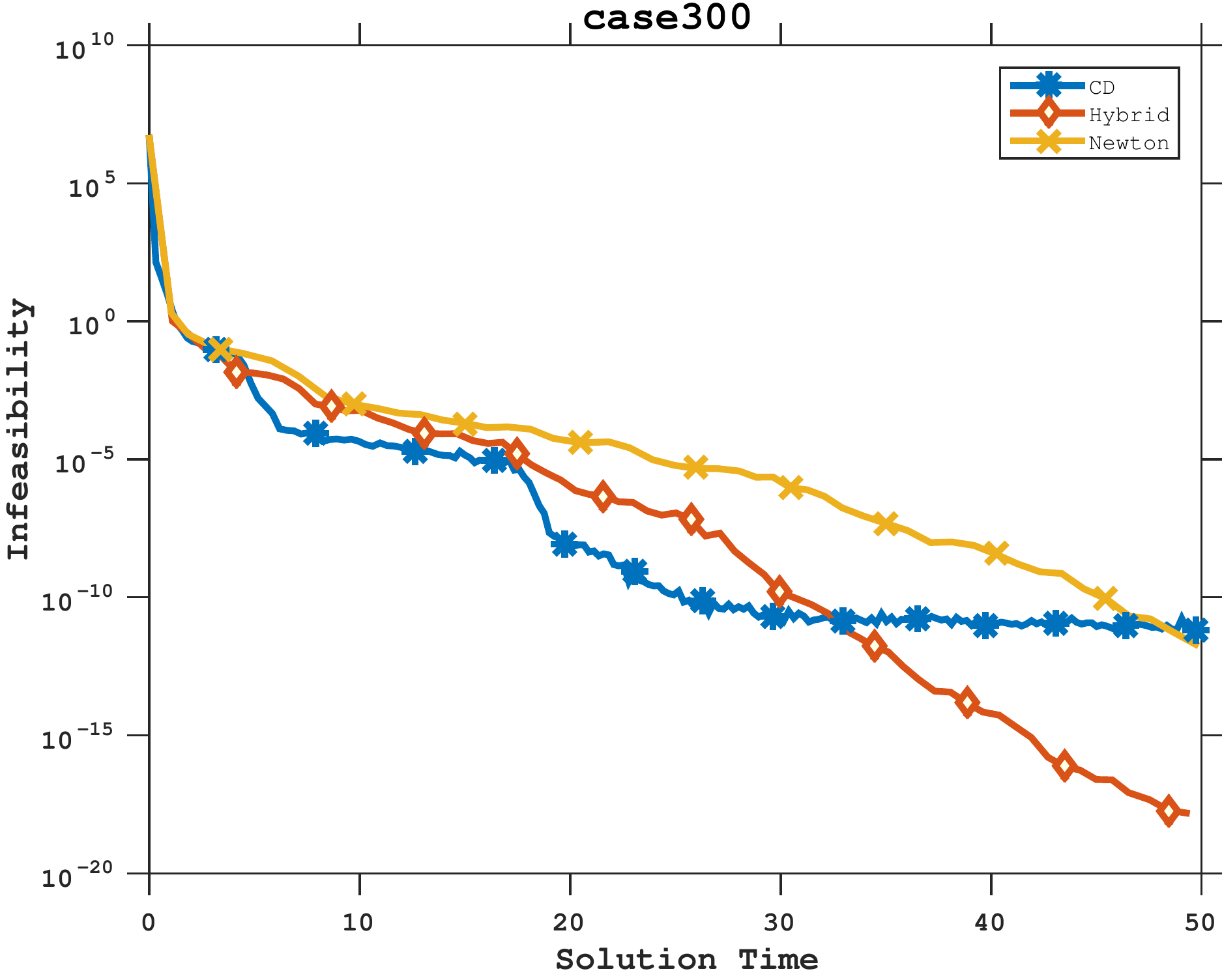}
  \includegraphics[width=0.32\textwidth]{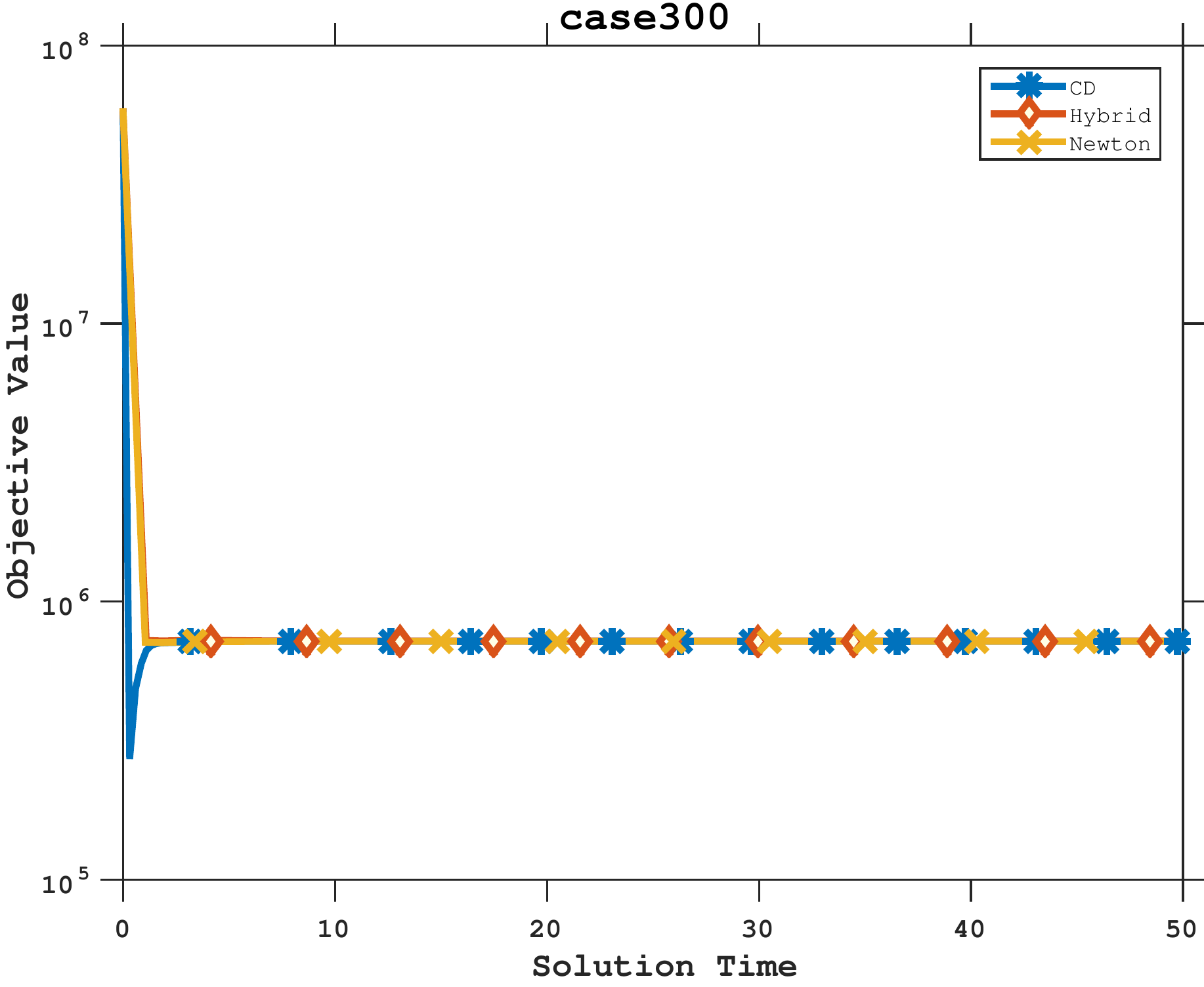}
  \includegraphics[width=0.32\textwidth]{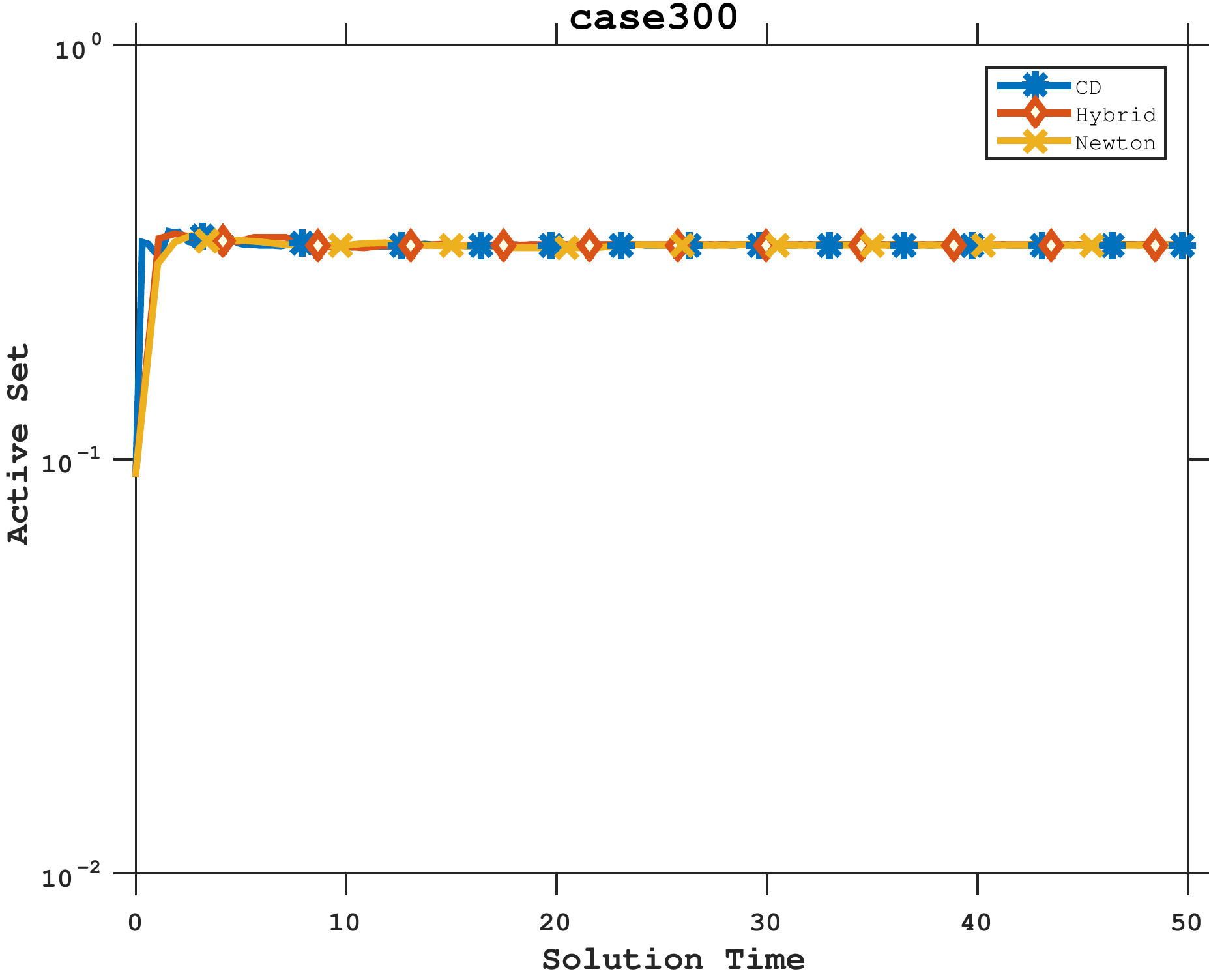}
  \caption{Results on IEEE 300-bus test case: infeasibility, objective function, and the proportion of constraints in the active set over time.}
  \label{fig:case300}
\end{figure*}
\begin{figure*}[htbp]
  \center
  \includegraphics[width=0.32\textwidth]{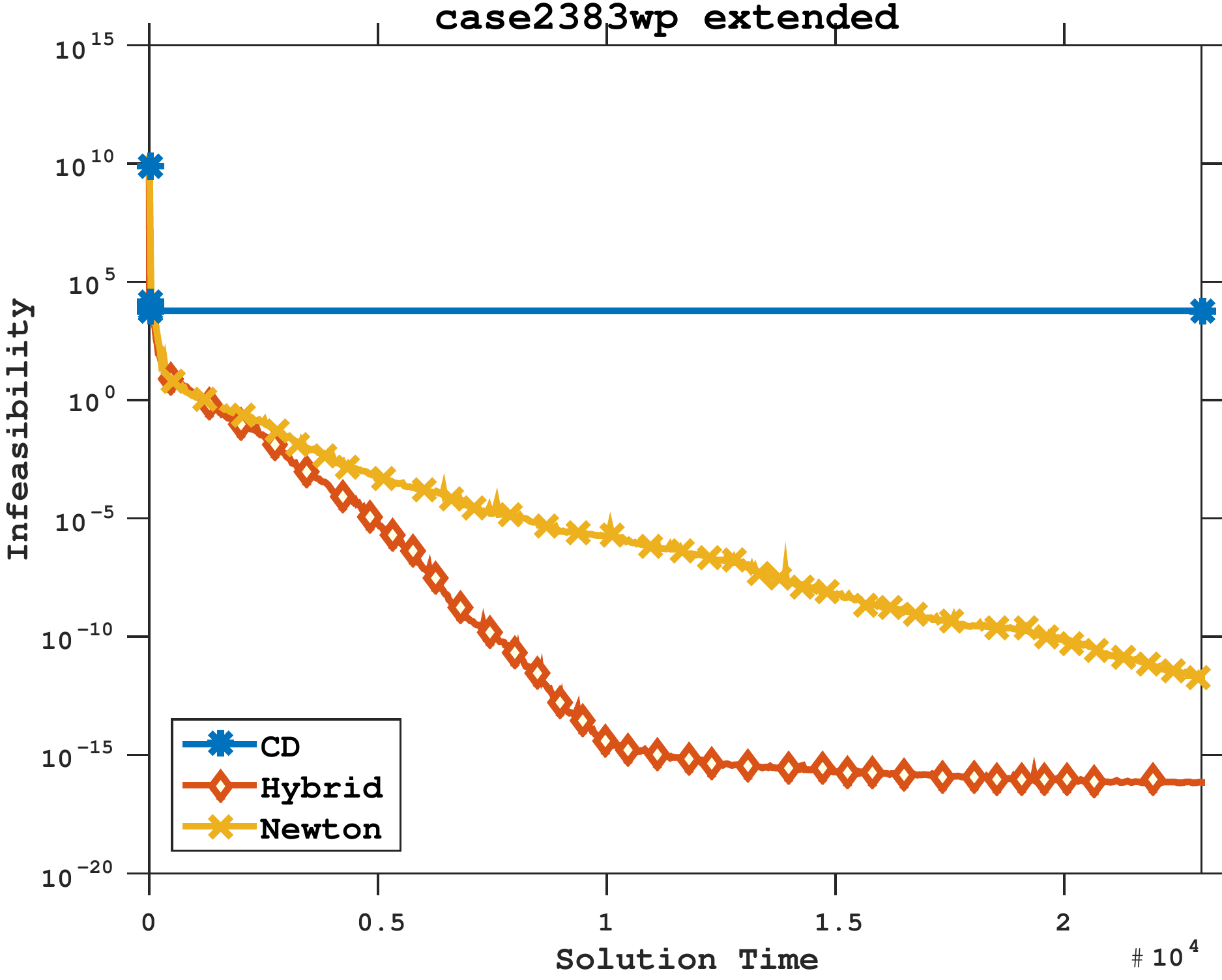}
  \includegraphics[width=0.32\textwidth]{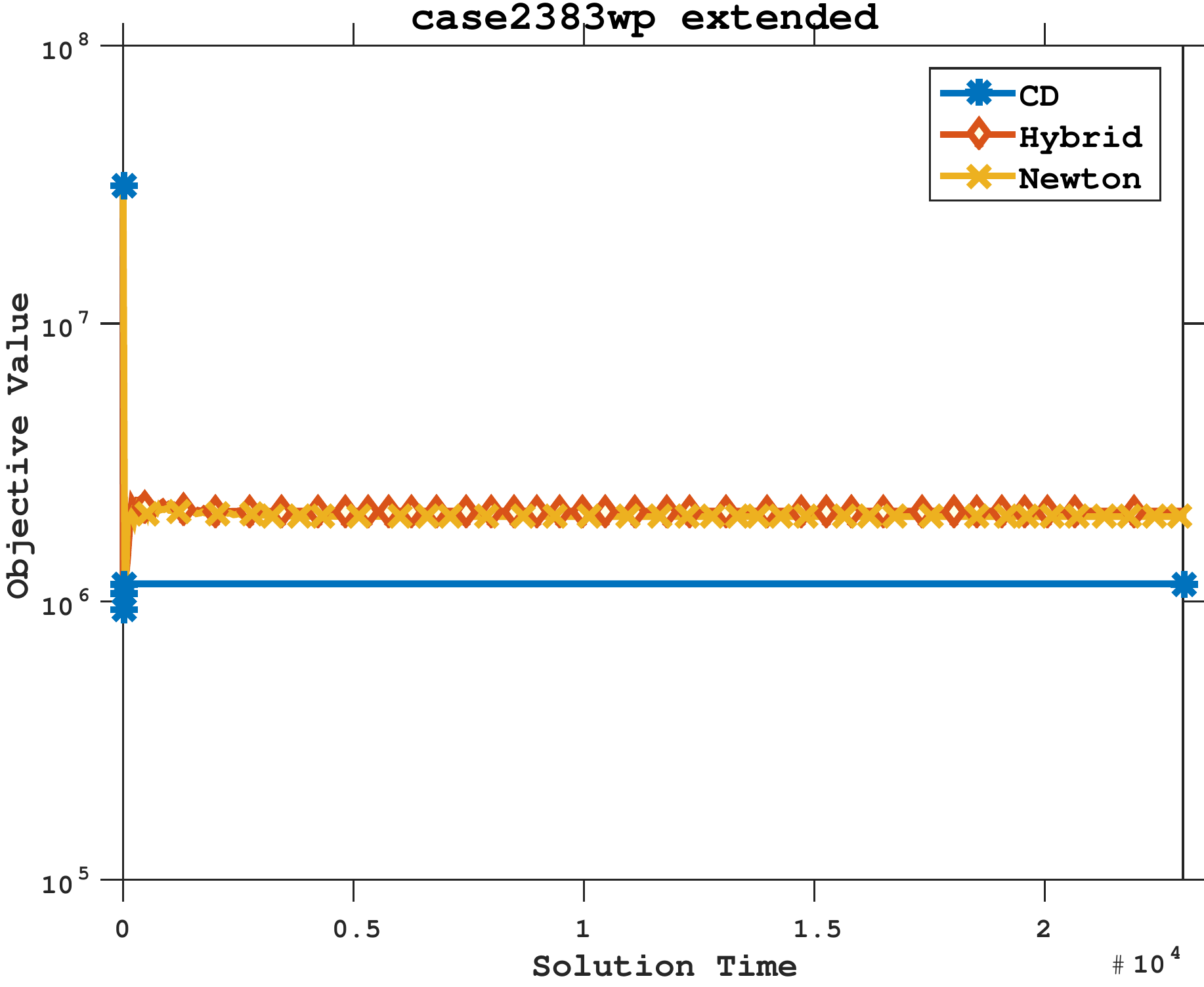}
  \\ $ $\\
  \includegraphics[width=0.32\textwidth]{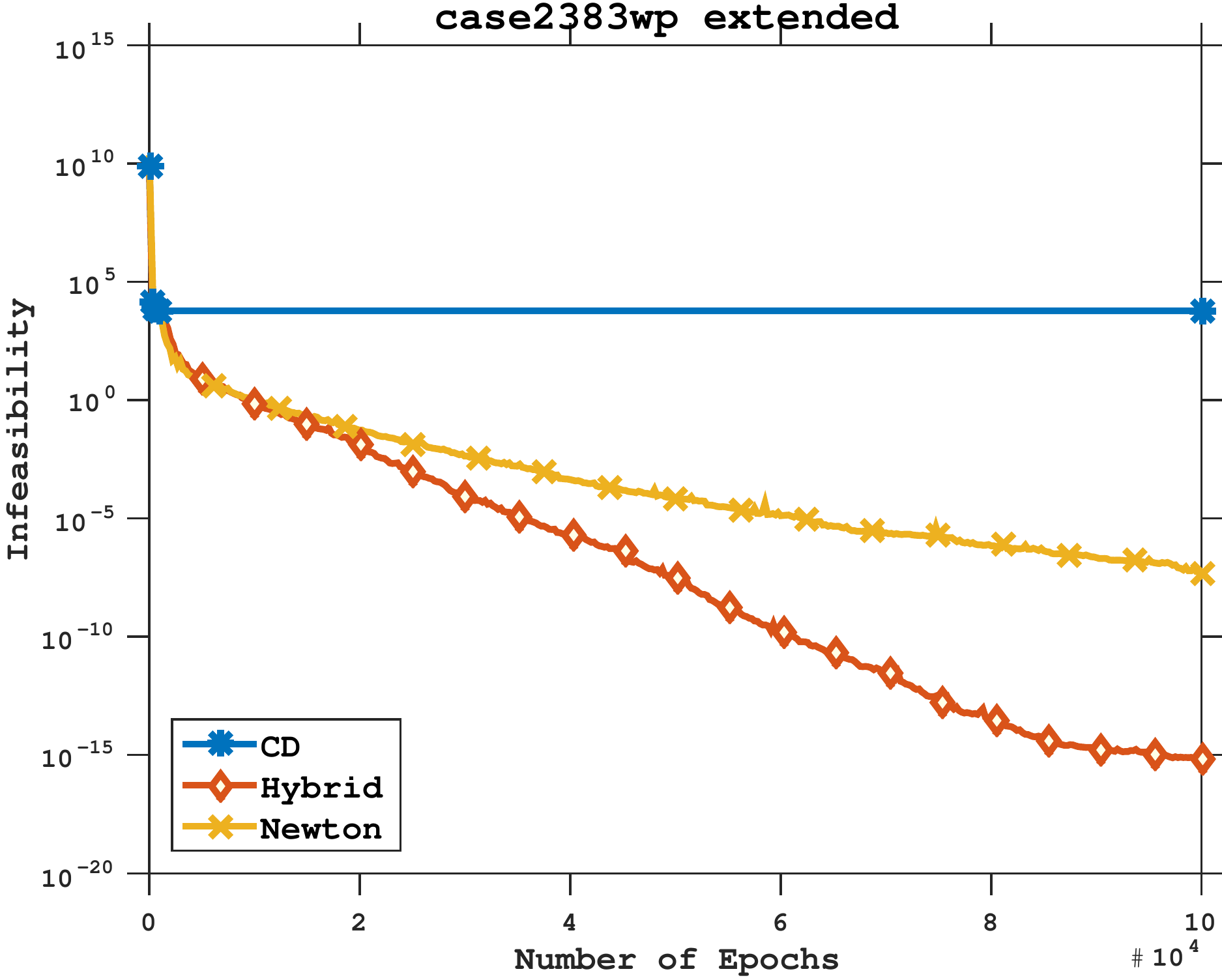}
  \includegraphics[width=0.32\textwidth]{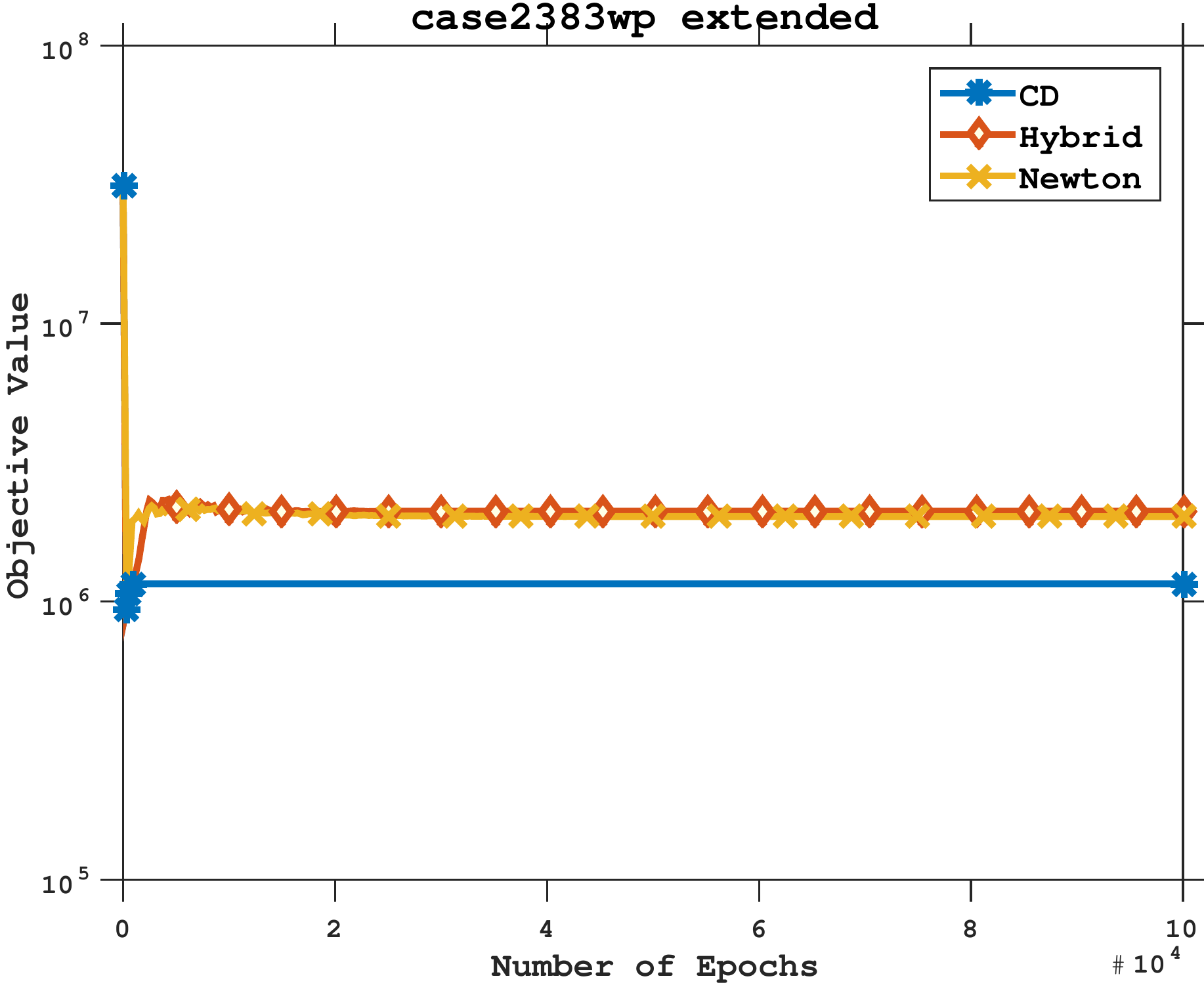}
  \caption{Results on 2383-bus test case modelling the transmission system of Poland (winter peak scenario): infeasibility and objective function over time.}
  \label{fig:case2383wp}
\end{figure*}

\section{Conclusions}
\label{sec:conclusions}

This paper proposes an approach for solving polynomial optimization problems to global optimality, by combining results from several different fields: moment relaxations, activity identification in non-linear programming, and point estimation (\ab{}) theory.
We propose an algorithm that combines a first-order method on the convex semidefinite relaxations that yields low-quality minimizers, with a second-order method on the polynomial optimization problem, that refines the minimizer quality.
The switch from the first to second-order method is based on a quantitative criterion which ensures quadratic convergence of Newton's method from the first iteration.
Finally, we illustrate our approach on instances of the optimal power-flow problem.

\section*{Acknowledgments}
We would like to acknowledge discussions with Alan C. Liddell and Jie Liu, both of whom contributed to the code for the experiments in the paper,
but chose not to be co-authors of the present submission.
This work has received funding from the European Union’s Horizon Europe research and innovation programme under grant agreement No. 101070568.
V.K. acknowledges support of the Czech Science Foundation (22-15524S).
J.M. acknowledges support of the Czech Science Foundation (23-07947S).

\bibliography{references}

\begin{thebibliography}{10}

\bibitem{alexander1978homotopy}
J.~Alexander and J.~A. Yorke.
\newblock The homotopy continuation method: numerically implementable
  topological procedures.
\newblock {\em Transactions of the American Mathematical Society},
  242:271--284, 1978.

\bibitem{allgower2003introduction}
E.~L. Allgower and K.~Georg.
\newblock {\em Introduction to numerical continuation methods}.
\newblock SIAM, 2003.

\bibitem{alvarezUnifyingLocalConvergence2008}
F.~Alvarez, J.~Bolte, and J.~Munier.
\newblock A {{Unifying Local Convergence Result}} for {{Newton}}'s {{Method}}
  in {{Riemannian Manifolds}}.
\newblock {\em Foundations of Computational Mathematics}, 8(2):197--226, Apr.
  2008.

\bibitem{Handbook}
M.~F. Anjos and J.-B. Lasserre, editors.
\newblock {\em Handbook on semidefinite, conic and polynomial optimization},
  volume 166 of {\em International series in operations research \& management
  science}.
\newblock Springer, New York, 2012.

\bibitem{bates2013numerically}
D.~J. Bates, A.~J. Sommese, J.~D. Hauenstein, and C.~W. Wampler.
\newblock {\em Numerically solving polynomial systems with Bertini}.
\newblock SIAM, 2013.

\bibitem{bonnansNumericalOptimizationTheoretical2006}
J.-F. Bonnans, J.~C. Gilbert, C.~Lemarechal, and C.~A. Sagastiz{\'a}bal.
\newblock {\em Numerical {{Optimization}}: {{Theoretical}} and {{Practical
  Aspects}}}.
\newblock Universitext. {Springer-Verlag}, {Berlin Heidelberg}, second edition,
  2006.

\bibitem{boumalDeterministicGuaranteesBurerMonteiro2020}
N.~Boumal, V.~Voroninski, and A.~S. Bandeira.
\newblock Deterministic {{Guarantees}} for {{Burer-Monteiro Factorizations}} of
  {{Smooth Semidefinite Programs}}.
\newblock {\em Communications on Pure and Applied Mathematics}, 73(3):581--608,
  2020.

\bibitem{bukhshLocalSolutionsOptimal2013}
W.~A. Bukhsh, A.~Grothey, K.~I.~M. McKinnon, and P.~A. Trodden.
\newblock Local {{Solutions}} of the {{Optimal Power Flow Problem}}.
\newblock {\em IEEE Transactions on Power Systems}, 28(4):4780--4788, Nov.
  2013.

\bibitem{burerNonlinearProgrammingAlgorithm2003}
S.~Burer and R.~D. Monteiro.
\newblock A nonlinear programming algorithm for solving semidefinite programs
  via low-rank factorization.
\newblock {\em Mathematical Programming}, 95(2):329--357, Feb. 2003.

\bibitem{cain2012history}
M.~B. Cain, R.~P. O’neill, A.~Castillo, et~al.
\newblock History of optimal power flow and formulations.
\newblock {\em Federal Energy Regulatory Commission}, 1:1--36, 2012.

\bibitem{cartisComplexitySteepestDescent2010}
C.~Cartis, N.~I.~M. Gould, and {\relax Ph}.~L. Toint.
\newblock On the {{Complexity}} of {{Steepest Descent}}, {{Newton}}'s and
  {{Regularized Newton}}'s {{Methods}} for {{Nonconvex Unconstrained
  Optimization Problems}}.
\newblock {\em SIAM Journal on Optimization}, 20(6):2833--2852, Jan. 2010.

\bibitem{Cucker1999}
F.~Cucker and S.~Smale.
\newblock Complexity estimates depending on condition and round-off error.
\newblock {\em J. ACM}, 46(1):113--184, Jan. 1999.

\bibitem{daspremontStochasticSmoothingAlgorithm2014}
A.~{d'Aspremont} and N.~El~Karoui.
\newblock A {{Stochastic Smoothing Algorithm}} for {{Semidefinite
  Programming}}.
\newblock {\em SIAM Journal on Optimization}, 24(3):1138--1177, Jan. 2014.

\bibitem{facchinei1998accurate}
F.~Facchinei, A.~Fischer, and C.~Kanzow.
\newblock On the accurate identification of active constraints.
\newblock {\em SIAM Journal on Optimization}, 9(1):14--32, 1998.

\bibitem{Garstka_2021}
M.~Garstka, M.~Cannon, and P.~Goulart.
\newblock {COSMO}: A conic operator splitting method for convex conic problems.
\newblock {\em Journal of Optimization Theory and Applications},
  190(3):779--810, 2021.

\bibitem{ghaddarOptimalPowerFlow2016}
B.~Ghaddar, J.~Marecek, and M.~Mevissen.
\newblock Optimal {{Power Flow}} as a {{Polynomial Optimization Problem}}.
\newblock {\em IEEE Transactions on Power Systems}, 31(1):539--546, Jan. 2016.

\bibitem{gillStabilizedSQPMethod2017}
P.~E. Gill, V.~Kungurtsev, and D.~P. Robinson.
\newblock A stabilized {{SQP}} method: Superlinear convergence.
\newblock {\em Mathematical Programming}, 163(1):369--410, May 2017.

\bibitem{haaralaGloballyConvergentLimited2007}
N.~Haarala, K.~Miettinen, and M.~M. M{\"a}kel{\"a}.
\newblock Globally convergent limited memory bundle method for large-scale
  nonsmooth optimization.
\newblock {\em Mathematical Programming}, 109(1):181--205, Jan. 2007.

\bibitem{helmbergSpectralBundleMethod2014}
C.~Helmberg, M.~Overton, and F.~Rendl.
\newblock The spectral bundle method with second-order information.
\newblock {\em Optimization Methods and Software}, 29(4):855--876, July 2014.

\bibitem{helmbergSpectralBundleMethod2000}
C.~Helmberg and F.~Rendl.
\newblock A {{Spectral Bundle Method}} for {{Semidefinite Programming}}.
\newblock {\em SIAM Journal on Optimization}, 10(3):673--696, Jan. 2000.

\bibitem{henrion2005detecting}
D.~Henrion and J.-B. Lasserre.
\newblock Detecting global optimality and extracting solutions in gloptipoly.
\newblock In {\em Positive polynomials in control}, pages 293--310. Springer,
  2005.

\bibitem{henrionAlgebraicCertificatesTruncated2023}
D.~Henrion, S.~Naldi, and M.~S.~E. Din.
\newblock Algebraic certificates for the truncated moment problem, Feb. 2023.

\bibitem{henrionExactAlgorithmsSemidefinite2021}
D.~Henrion, S.~Naldi, and M.~Safey El~Din.
\newblock Exact algorithms for semidefinite programs with degenerate feasible
  set.
\newblock {\em Journal of Symbolic Computation}, 104:942--959, May 2021.

\bibitem{jeyakumarSemidefiniteProgrammingRelaxation2016}
V.~Jeyakumar, S.~Kim, G.~M. Lee, and G.~Li.
\newblock Semidefinite programming relaxation methods for global optimization
  problems with sparse polynomials and unbounded semialgebraic feasible sets.
\newblock {\em Journal of Global Optimization}, 65(2):175--190, June 2016.

\bibitem{jeyakumarPolynomialOptimizationNoncompact2014}
V.~Jeyakumar, J.~B. Lasserre, and G.~Li.
\newblock On {{Polynomial Optimization Over Non-compact Semi-algebraic Sets}}.
\newblock {\em Journal of Optimization Theory and Applications},
  163(3):707--718, Dec. 2014.

\bibitem{josz2014application}
C.~Josz, J.~Maeght, P.~Panciatici, and J.~C. Gilbert.
\newblock Application of the moment-sos approach to global optimization of the
  opf problem.
\newblock {\em IEEE Transactions on Power Systems}, 30(1):463--470, 2014.

\bibitem{joszLasserreHierarchyLarge2018}
C.~Josz and D.~K. Molzahn.
\newblock Lasserre {{Hierarchy}} for {{Large Scale Polynomial Optimization}} in
  {{Real}} and {{Complex Variables}}.
\newblock {\em SIAM Journal on Optimization}, 28(2):1017--1048, Jan. 2018.

\bibitem{klepMinimizerExtractionPolynomial2018}
I.~Klep, J.~Povh, and J.~Vol{\v c}i{\v c}.
\newblock Minimizer {{Extraction}} in {{Polynomial Optimization Is Robust}}.
\newblock {\em SIAM Journal on Optimization}, 28(4):3177--3207, Jan. 2018.

\bibitem{lasserre2001global}
J.~B. Lasserre.
\newblock Global optimization with polynomials and the problem of moments.
\newblock {\em SIAM Journal on optimization}, 11(3):796--817, 2001.

\bibitem{lasserre2015introduction}
J.~B. Lasserre.
\newblock {\em An introduction to polynomial and semi-algebraic optimization},
  volume~52.
\newblock Cambridge University Press, 2015.

\bibitem{laurentSumsSquaresMoment2009}
M.~Laurent.
\newblock Sums of {{Squares}}, {{Moment Matrices}} and {{Optimization Over
  Polynomials}}.
\newblock In M.~Putinar and S.~Sullivant, editors, {\em Emerging
  {{Applications}} of {{Algebraic Geometry}}}, The {{IMA Volumes}} in
  {{Mathematics}} and Its {{Applications}}, pages 157--270. {Springer}, {New
  York, NY}, 2009.

\bibitem{lewisIdentifyingActivity2011}
A.~S. Lewis and S.~J. Wright.
\newblock Identifying {{Activity}}.
\newblock {\em SIAM Journal on Optimization}, 21(2):597--614, Apr. 2011.

\bibitem{luoErrorBoundsAnalytic1994}
Z.-Q. Luo and J.-S. Pang.
\newblock Error bounds for analytic systems and their applications.
\newblock {\em Mathematical Programming}, 67(1):1--28, Oct. 1994.

\bibitem{magronSparsePolynomialOptimization2023}
V.~Magron and J.~Wang.
\newblock {\em Sparse {{Polynomial Optimization}}: {{Theory}} and
  {{Practice}}}, volume~05 of {\em Series on {{Optimization}} and {{Its
  Applications}}}.
\newblock {WORLD SCIENTIFIC (EUROPE)}, May 2023.

\bibitem{maiExploitingConstantTrace2022}
N.~H.~A. Mai, J.~B. Lasserre, V.~Magron, and J.~Wang.
\newblock Exploiting {{Constant Trace Property}} in {{Large-scale Polynomial
  Optimization}}.
\newblock {\em ACM Transactions on Mathematical Software}, 48(4):40:1--40:39,
  Dec. 2022.

\bibitem{marecekLowrankCoordinatedescentAlgorithm2017}
J.~Mare{\v c}ek and M.~Tak{\'a}{\v c}.
\newblock A low-rank coordinate-descent algorithm for semidefinite programming
  relaxations of optimal power flow.
\newblock {\em Optimization Methods and Software}, 32(4):849--871, July 2017.

\bibitem{Navascués_2008}
M.~Navascués, S.~Pironio, and A.~Acín.
\newblock A convergent hierarchy of semidefinite programs characterizing the
  set of quantum correlations.
\newblock {\em New Journal of Physics}, 10(7):073013, jul 2008.

\bibitem{nesterov1994interior}
Y.~Nesterov, A.~Nemirovskii, and Y.~Ye.
\newblock {\em Interior-point polynomial algorithms in convex programming},
  volume~13.
\newblock SIAM, 1994.

\bibitem{nesterovRandomizedMinimizationEigenvalue2023}
Y.~Nesterov and A.~Rodomanov.
\newblock Randomized {{Minimization}} of {{Eigenvalue Functions}}, Jan. 2023.

\bibitem{nieOptimalityConditionsFinite2014}
J.~Nie.
\newblock Optimality conditions and finite convergence of {{Lasserre}}'s
  hierarchy.
\newblock {\em Mathematical Programming}, 146(1):97--121, Aug. 2014.

\bibitem{nollSpectralBundleMethods2005}
D.~Noll and P.~Apkarian.
\newblock Spectral bundle methods for non-convex maximum eigenvalue functions:
  Second-order methods.
\newblock {\em Mathematical Programming}, 104(2-3):729--747, Nov. 2005.

\bibitem{oberlinActiveSetIdentification2006}
C.~Oberlin and S.~J. Wright.
\newblock Active {{Set Identification}} in {{Nonlinear Programming}}.
\newblock {\em SIAM Journal on Optimization}, 17(2):577--605, Jan. 2006.

\bibitem{o2016conic}
B.~O’{D}onoghue, E.~Chu, N.~Parikh, and S.~Boyd.
\newblock Conic optimization via operator splitting and homogeneous self-dual
  embedding.
\newblock {\em Journal of Optimization Theory and Applications},
  169:1042--1068, 2016.

\bibitem{schweighofer2005optimization}
M.~Schweighofer.
\newblock Optimization of polynomials on compact semialgebraic sets.
\newblock {\em SIAM Journal on Optimization}, 15(3):805--825, 2005.

\bibitem{ShubSmale1993}
M.~Shub and S.~Smale.
\newblock On the complexity of {B}ezout's theorem i -- {G}eometric aspects.
\newblock {\em Journal of the AMS}, 6(2), 1993.

\bibitem{souto2020exploiting}
M.~Souto, J.~D. Garcia, and A.~Veiga.
\newblock Exploiting low-rank structure in semidefinite programming by
  approximate operator splitting.
\newblock {\em Optimization}, pages 1--28, 2020.

\bibitem{tyburec2021global}
M.~Tyburec, J.~Zeman, M.~Kru{\v{z}}{\'\i}k, and D.~Henrion.
\newblock Global optimality in minimum compliance topology optimization of
  frames and shells by moment-sum-of-squares hierarchy.
\newblock {\em Structural and Multidisciplinary Optimization},
  64(4):1963--1981, 2021.

\bibitem{wakiStrangeBehaviorsInteriorpoint2012}
H.~Waki, M.~Nakata, and M.~Muramatsu.
\newblock Strange behaviors of interior-point methods for solving semidefinite
  programming problems in polynomial optimization.
\newblock {\em Computational Optimization and Applications}, 53(3):823--844,
  Dec. 2012.

\bibitem{waldspurgerRankOptimalityBurer2020}
I.~Waldspurger and A.~Waters.
\newblock Rank {{Optimality}} for the {{Burer--Monteiro Factorization}}.
\newblock {\em SIAM Journal on Optimization}, 30(3):2577--2602, Jan. 2020.

\bibitem{wangCertifyingGlobalOptimality2022}
J.~Wang, V.~Magron, and J.~B. Lasserre.
\newblock Certifying global optimality of {{AC-OPF}} solutions via sparse
  polynomial optimization.
\newblock {\em Electric Power Systems Research}, 213:108683, Dec. 2022.

\bibitem{wangCSTSSOSCorrelativeTerm2021}
J.~Wang, V.~Magron, J.~B. Lasserre, and N.~H.~A. Mai.
\newblock {{CS-TSSOS}}: {{Correlative}} and term sparsity for large-scale
  polynomial optimization, June 2021.

\bibitem{weisser2019polynomial}
T.~Weisser, B.~Legat, C.~Coey, L.~Kapelevich, and J.~P. Vielma.
\newblock Polynomial and moment optimization in julia and jump.
\newblock In {\em JuliaCon}, 2019.

\bibitem{wrightAlgorithmDegenerateNonlinear2005}
S.~J. Wright.
\newblock An {{Algorithm}} for {{Degenerate Nonlinear Programming}} with
  {{Rapid Local Convergence}}.
\newblock {\em SIAM Journal on Optimization}, 15(3):673--696, Jan. 2005.

\bibitem{xuVerifyingGlobalOptimality2021}
S.~Xu, R.~Ma, D.~K. Molzahn, H.~Hijazi, and C.~Josz.
\newblock Verifying {{Global Optimality}} of {{Candidate Solutions}} to
  {{Polynomial Optimization Problems}} using a {{Determinant Relaxation
  Hierarchy}}.
\newblock In {\em 2021 60th {{IEEE Conference}} on {{Decision}} and {{Control}}
  ({{CDC}})}, pages 3143--3148, Dec. 2021.

\bibitem{yangInexactProjectedGradient2022}
H.~Yang, L.~Liang, L.~Carlone, and K.-C. Toh.
\newblock An inexact projected gradient method with rounding and lifting by
  nonlinear programming for solving rank-one semidefinite relaxation of
  polynomial optimization.
\newblock {\em Mathematical Programming}, Nov. 2022.

\bibitem{yurtseverScalableSemidefiniteProgramming2021}
A.~Yurtsever, J.~A. Tropp, O.~Fercoq, M.~Udell, and V.~Cevher.
\newblock Scalable {{Semidefinite Programming}}.
\newblock {\em SIAM Journal on Mathematics of Data Science}, 3(1):171--200,
  Jan. 2021.

\bibitem{zimmermanMATPOWERSteadyStateOperations2011}
R.~D. Zimmerman, C.~E. {Murillo-S{\'a}nchez}, and R.~J. Thomas.
\newblock {{MATPOWER}}: {{Steady-State Operations}}, {{Planning}}, and
  {{Analysis Tools}} for {{Power Systems Research}} and {{Education}}.
\newblock {\em IEEE Transactions on Power Systems}, 26(1):12--19, Feb. 2011.

\end{thebibliography}

\end{document}